\DeclarePairedDelimiter\floor{\lfloor}{\rfloor}
\newcommandx{\unsure}[2][1=]{\todo[linecolor=red,backgroundcolor=red!25,bordercolor=red,#1]{#2}}
\newcommandx{\change}[2][1=]{\todo[linecolor=blue,backgroundcolor=blue!25,bordercolor=blue,#1]{#2}}
\newcommandx{\info}[2][1=]{\todo[linecolor=OliveGreen,backgroundcolor=OliveGreen!25,bordercolor=OliveGreen,#1]{#2}}
\newcommandx{\improvement}[2][1=]{\todo[linecolor=Plum,backgroundcolor=red!25,bordercolor=red,#1]{#2}}
\newcommandx{\thiswillnotshow}[2][1=]{\todo[disable,#1]{#2}}
\begin{document}

\newtheorem{prop}{Proposition}[section]
\newtheorem{theorem}{Theorem}[section]
\newtheorem{lemma}{Lemma}[section]
\newtheorem{cor}{Corollary}[section]
\newtheorem{remark}{Remark}[section]
\theoremstyle{definition}
\newtheorem{defn}{Definition}[section]
\newtheorem{ex}{Example}[section]

\numberwithin{equation}{section}

\title{Critical intermittency in random interval maps}
\author[Homburg, Kalle, Ruziboev, Verbitskiy, Zeegers]{Ale Jan Homburg, Charlene Kalle, Marks Ruziboev, Evgeny Verbitskiy and Benthen Zeegers}

\address{A.J. Homburg\\ KdV Institute for Mathematics, University of Amsterdam, Science park 107, 1098 XG Amsterdam, Netherlands\newline Department of Mathematics, VU University Amsterdam, De Boelelaan 1081, 1081 HV Amsterdam, Netherlands}
\email{a.j.homburg@uva.nl}

\address{C.C.C.J. Kalle\\ Mathematical Institute, University of Leiden, PO Box 9512, 2300 RA Leiden, The Netherlands}
\email{kallecccj@math.leidenuniv.nl}

\address{M. Ruziboev\\ Mathematical Institute, University of Leiden, PO Box 9512, 2300 RA Leiden, The Netherlands\newline Faculty of Mathematics, University of Vienna, Oskar-Morgnstern Platz 1, Austria}
\email{marks.ruziboev@univie.ac.at}

\address{E.A. Verbitskiy\\ Mathematical Institute, University of Leiden, PO Box 9512, 2300 RA Leiden, The Netherlands\newline   Bernoulli Institute, University of Groningen, Nijenborgh 9, 9747 AG Groningen, The Netherlands}
\email{evgeny@math.leidenuniv.nl}

\address{B.P. Zeegers\\ Mathematical Institute, University of Leiden, PO Box 9512, 2300 RA Leiden, The Netherlands}
\email{b.p.zeegers@math.leidenuniv.nl}

\date{Version of \today}

\begin{abstract} Critical intermittency stands for a type of intermittent dynamics in iterated function systems, caused by an interplay of a superstable fixed point and a repelling fixed point. 
	We consider critical intermittency for iterated function systems of interval maps
	and demonstrate the existence of a phase transition when varying probabilities, where the absolutely continuous stationary measure changes between finite and infinite. We discuss further properties of this stationary measure and show that its density is not in $L^q$ for any $q>1$. 
	This provides a theory of critical intermittency alongside the theory for the well studied Manneville-Pomeau maps, where the intermittency is caused by a neutral fixed point.
\end{abstract}
\subjclass[2020]{Primary: 37A05, 37E05, 37H05}
\keywords{Critical intermittency, random dynamics, invariant measures}

\maketitle

\section{Introduction}

Intermittency refers to the behaviour of a dynamical system that alternates between long periods of exhibiting one out of several types of dynamical characteristics. In their seminal paper \cite{ManPum} Manneville and Pomeau investigated intermittency in the context of transitions to turbulence in convective fluids, see also \cite{ManPum2,BPV}, and distinguished several different types of intermittency. An illustrative example of a one-dimensional map with intermittent behaviour is the Manneville-Pomeau map
\[ T: [0,1]\to [0,1], \, x \mapsto x+x^{1+\alpha} \pmod 1\]
for some $\alpha>0$. The source of intermittency for this map is the presence of a neutral fixed point at the origin, which causes orbits to spend long periods of time close to zero, while behaving chaotically once they escape. 
 
\medskip 
The dynamics of the Manneville-Pomeau map and similar maps with a single neutral fixed point have been extensively studied over the past decades. It is known for example that such maps admit an absolutely continuous invariant measure (acim) and that their statistical properties are determined by the characteristics of the fixed point. See~\cite{LSV,PW99,Y99,G004,G007,BT16,BS16,FFTV} for results on Manneville-Pomeau type maps, and \cite{Tha80,CF90,Hu01,PY01,Zwe03} for other related results on one-dimensional systems with neutral fixed points.

\medskip
Intermittency caused by neutral fixed points was also studied in random dynamical systems, see e.g.~\cite{BBD14,BB16,KKV17,BBR19,BRS20,KMTV}. These results show that a random dynamical system, built as a mixture of `good' maps with finite acim's and `bad' maps with slower mixing rates or without finite acim's, inherit ergodic properties typical for the `good' maps: e.g., the random systems still admit a unique finite acim.
On the other hand, it is clear that in the random mixture of good and bad maps, the presence of bad maps should be visible in the properties of the acim. In \cite{KMTV} it was shown that in the random system built using the `good' Gauss and `bad' R\'enyi continued fractions maps, the density of the acim
is provably less smooth than the invariant density of the Gauss map. This loss of smoothness is an interesting new phenomenon, which deserves further study.

\medskip
The topic of the present paper is another type of intermittency observed in random dynamical systems: 
 the so-called {\em critical intermittency} introduced recently in \cite{AbbGhaHom,HP}. To illustrate the concept, consider the Markov process generated by random applications of one of the two logistic maps $T_2(x) = 2x(1-x)$ and $T_4 (x) = 4x(1-x)$: for each $n$, independently 
\[  x_{n+1}=\begin{cases} T_2(x_n),&\ \text{ with prob. }p_2,\\
 T_4(x_n),&\ \text{ with prob. }p_4=1-p_2.
 \end{cases}\]
The dynamics of these two maps individually is quite different: $T_4$ exhibits chaotic behaviour and admits an ergodic absolutely continuous invariant probability measure, while $T_2$ has $\frac12$ as a superattracting fixed point with $(0,1)$ as its basin of attraction. 
Under random compositions of $T_2$ and $T_4$ the typical behaviour is the following: orbits are quickly attracted to $\frac{1}{2}$ by applications of $T_2$ and are then repelled first close to 1 and then close to $0$ by one application of $T_4$ followed by an application of either $T_2$ or $T_4$. Since 0 is a repelling fixed point for both maps, orbits then leave a neighbourhood of 0 after a number of time steps, see Figure \ref{fig:interm}. This pattern occurs infinitely often in typical random orbits and is the result of the interplay between the exponential divergence from 0 under $T_2$ and $T_4$ and the superexponential convergence to $\frac12$ under $T_2$. Figure~\ref{fig:inter}(c) shows an orbit under random compositions of $T_2$ and $T_4$ as well as an orbit of a point under a Manneville-Pomeau map in (a) and a random orbit under compositions of the Gauss and R\'enyi maps in (b).

\begin{figure}[h]
\begin{tikzpicture}[scale =4]
\draw(-.01,0)--(1.01,0)(0,-.01)--(0,1.01);
\draw[dotted](.5,0)--(.5,1)(0,1)--(.5,1)(.5,.5)--(0,.5);
\draw[dotted](0,0)--(1,1);
\draw[line width=.4mm, green!50!blue!70!black, smooth, samples =20, domain=0:1] plot(\x, { 4* \x * (1-\x)});
\draw[line width=.4mm, green!50!blue!70!black, smooth, samples =20, domain=0:1] plot(\x, { 2* \x * (1-\x)});
\draw[red, dashed, line width=.25mm](.25,0)--(.255,.375)--(.375,.375)--(.37995,.4688)--(.4688,.4688)--(.4688,.9961)--(.9961,.9961)--(.9961,.0155)--(.0155,.0155)--(.0155,.0262)--(.0262,.0262)--(0.0262,.051)--(0.051,.051)--(0.051,.1975)--(0.1975,.1975);

\node[below] at (.25,0){\small $x$};
\node[below] at (-.01,0){\tiny 0};
\node[below] at (1,0){\tiny 1};
\node[below] at (.5,0){\tiny $\frac12$};
\node[left] at (0,1){\tiny 1};
\node[left] at (0,.5){\tiny $\frac12$};
\end{tikzpicture}
\caption{Critical intermittency in the random system of logistic maps $T_2$, $T_4$. The dashed line indicates part of a random orbit of $x$}
\label{fig:interm}
\end{figure}
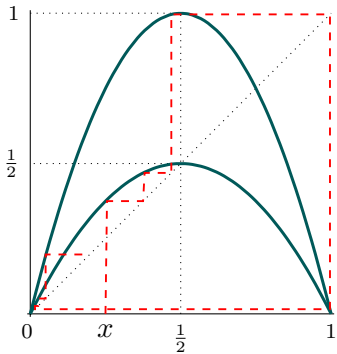

\begin{figure}[ht]
\includegraphics[width=0.9\textwidth]{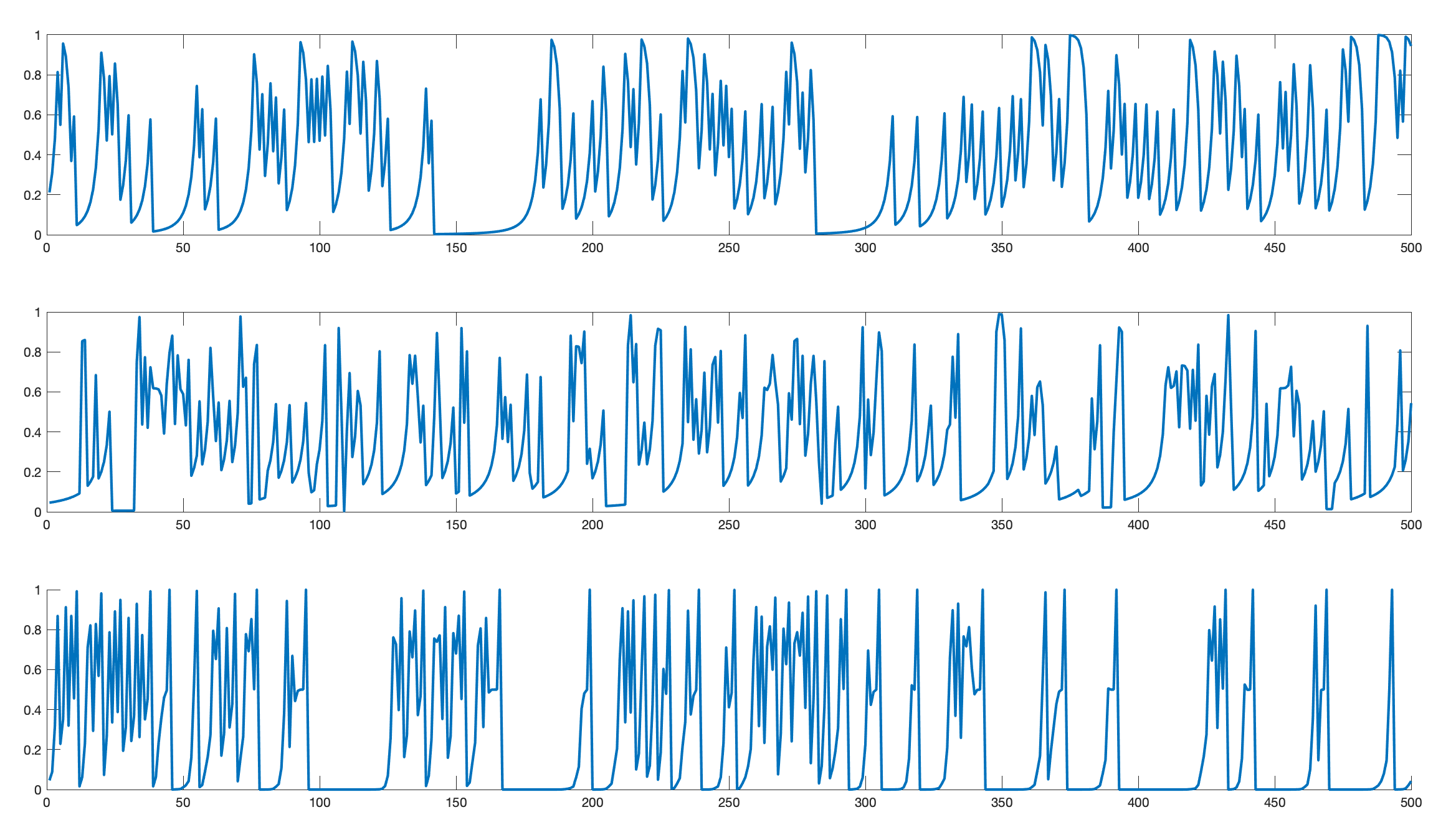}
\caption{Intermittent behaviour of orbits of (a) a single Manneville-Pomeau map with $\alpha=1.5$, (b) a random mixture of the Gauss and R\'enyi continued fractions maps where the Gauss map is chosen with probability $p=0.1$ and (c) a random mixture of the logistic maps $T_2$ and $T_4$ where the map $T_4$ is chosen with probability $p=0.6$.}
\label{fig:inter}
\end{figure} 

\medskip
The dynamical behaviour of random compositions of the two logistic maps $T_2$ and $T_4$ was studied in \cite{AD00,AthSch,AbbGhaHom,Car02,HP} among others. In \cite{AbbGhaHom,HP} the authors investigated the existence and finiteness of absolutely continuous invariant measures for this random system and for iterated function systems consisting of rational maps on the Riemann sphere. One particular result from \cite{AbbGhaHom} states that the random dynamical system generated by i.i.d.~compositions of $T_2$ and $T_4$ chosen with probabilities $p_2$ and $p_4 = 1-p_2$ admits an absolutely continuous invariant measure that is $\sigma$-finite on the interval $[0,1]$ and that is infinite in case $p_2>\frac12$. An interesting question that was left open in \cite{AbbGhaHom} is whether for $p_2 \leq \frac 12$ this measure is infinite or finite.

\medskip
In this article we answer this question. We consider a large family of random interval maps with critical intermittency that includes the random combination of $T_2$ and $T_4$. The systems we consider consist of i.i.d.~compositions of a finite number of maps of two types: bad maps which share a superattracting fixed point and good maps that map the superattacting fixed point onto a common repelling fixed point. To be precise, the families of maps we consider are defined as follows.

\medskip
Throughout the text we fix a point $c \in (0,1)$ that will represent the single critical point of our maps, both good and bad.

\medskip
A map $T_g:[0,1] \to [0,1]$ is in the class of {\em good maps}, denoted by $\mathfrak{G}$, if
\begin{enumerate}[(G1)]
\item $T_g|_{(0,c)}$ and  $T_g|_{(c,1)}$ are $C^3$ diffeomorphisms onto $(0,1)$ and $T_g(\{0,c, 1\}) \subseteq \{0,1\}$;
\item $T_g$ has non-positive Schwarzian derivative on $[0,c)$ and $(c,1]$;
\item to $T_g$ we can associate three constants $r_g \geq 1$, $0< K_g<1$ and $M_g > r_g$ such that
\begin{align}\label{eqn2.1}
K_g |x-c|^{r_g-1} \leq |DT_g(x)| \leq M_g |x-c|^{r_g-1};
\end{align}
\item we have $|DT_g(0)|,|DT_g(1)| > 1$.
\end{enumerate}
These conditions imply in particular that at least one of the maps $T_g|_{[0,c]}$ or $T_g|_{[c,1]}$ is continuous, and that both branches of $T_g$ are strictly monotone. Note also that the conditions $K_g <1$ and $M_g > r_g$ are superfluous, since we can always choose a smaller constant $K$ and larger constant $M$ to satisfy \eqref{eqn2.1}, but we need these specific bounds in our estimates later. The critical point $c$ is mapped to either 0 or 1 under each of the good maps and both 0 and 1 are (eventually) fixed points or periodic points (with period 2) by (G1) that are repelling by (G4). Examples include the doubling map and any surjective unimodal map, see Figures~\ref{T:examples}(a) and (b).

\medskip
The choice of conditions (G1)-(G4) is based on two factors: firstly, these conditions incorporate the most important properties of the `good' logistic map $T_4(x)=4x(1-x)$, which is the primary motivating example for this work,  and secondly, the techniques used in this paper are motivated by the work of Nowicki and Van Strien \cite{NowvSt} where the following result has been proven. Throughout the text we let $\lambda$ denote the one-dimensional Lebesgue measure.

\begin{theorem}\label{thrm1.1}
Suppose that $T:[0,1] \to [0,1]$ is unimodal, $C^3$, has negative Schwarzian derivative and that the critical point of $T$ is of order $r \ge 1$. Moreover assume that the growth rate of $\left|D T^{n}\left(c_{1}\right)\right|$, $c_1=T(c)$,  is so fast that
\begin{equation}\label{eq:growth}
\sum_{n=0}^{\infty}\left|D f^{n}\left(c_{1}\right)\right|^{-1 / r}<\infty.
\end{equation}
Then $T$ has a unique absolutely continuous invariant probability measure $\mu$ which is ergodic and of positive entropy. Furthermore, there exists a positive constant $K$ such that
\begin{equation}\label{eq:NvS}
\mu(A) \le K\lambda(A)^{1 / r},
\end{equation}
for any measurable set $A \subset(0,1)$. Finally, the density $\rho=\frac {d\mu}{d\lambda}$ of the measure $\mu$ with respect to $\lambda$ is an $L^{\mathrm{\tau}-}$-function where $\tau=r /(r-1)$ and $L^{\tau-}=\bigcap_{1 \leqq t<\tau} L^{t}$ and $L^{t}=\big\{\rho \in L^{1}: \int_0^1|\rho|^{t} d \lambda<\infty \big\}$.
\end{theorem}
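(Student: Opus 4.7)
The plan is to construct $\mu$ as a fixed point of the Perron--Frobenius operator $\mathcal{L}$, establish the measure inequality \eqref{eq:NvS}, and derive the $L^{\tau-}$ regularity from it. The two central tools are the Koebe distortion principle, which follows from the negative Schwarzian derivative and applies to inverse branches of $T^n$ whose images avoid a definite neighbourhood of $c$, and the order-$r$ behaviour of $T$ at $c$, which sends intervals of length $\delta$ near $c$ to intervals of length comparable to $\delta^r$ (and, conversely, inverts to a $\delta\mapsto\delta^{1/r}$ expansion).

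First, using Ma\~n\'e's hyperbolicity lemma, I would obtain uniform exponential expansion of $T^n$ on the complement of any fixed critical neighbourhood; combined with Koebe this yields bounded-distortion inverse branches on such ``safe'' regions. I would then construct the acim as an $L^1$-accumulation point of the Ces\`aro averages $\frac1n\sum_{k=0}^{n-1}\mathcal{L}^k\mathbf{1}$. Nontriviality of the limit requires ruling out concentration of mass at the endpoints of $[0,1]$ and along the forward critical orbit; the latter is exactly where the summability condition \eqref{eq:growth} enters, controlling the contributions of inverse orbits that land close to $c$.

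The core estimate $\mu(A) \le K\lambda(A)^{1/r}$ is obtained by pulling $A$ back and splitting the preimages $T^{-n}(A)$ into \emph{safe} branches (uniformly bounded away from $c$ on their last passage) and \emph{critical} branches (passing through a small neighbourhood of $c$ at some iterate $k$). Safe branches contribute a piece of length $\lesssim |DT^n(y)|^{-1}\lambda(A)$ by Koebe, whereas each critical branch contributes a piece of length $\lesssim |DT^{n-k}(c_1)|^{-1/r}\lambda(A)^{1/r}$, reflecting the stretching induced by inverting an order-$r$ critical point and then Koebe-controlling the remaining iterates along the critical orbit. Summing the two contributions against the weights $|DT^n(y)|^{-1}$ and invoking \eqref{eq:growth} to make the critical sum converge bounds $\mu(A)$ by a multiple of $\lambda(A)^{1/r}$. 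The $L^{\tau-}$ regularity then follows by distribution-function calculus: the inequality rewrites as
\[
\int_{\{\rho>t\}}\rho\,d\lambda \;=\; \mu(\{\rho>t\}) \;\le\; K\,\lambda(\{\rho>t\})^{1/r},
\]
which combined with $\int\rho\,d\lambda=1$ gives $\lambda(\{\rho>t\})\lesssim t^{-r/(r-1)}$, so a layer-cake integration places $\rho$ in $L^q$ for every $q<\tau$. Uniqueness, ergodicity and positive entropy follow from standard nonuniformly expanding arguments once a density has been produced and bounded distortion on safe branches is in hand.

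The main technical obstacle is the bookkeeping of critical returns: each inverse branch of $T^n$ that passes close to $c$ must be paired with a unique index along the forward critical orbit $c_1,c_2,\ldots$ so that the geometric contributions sum to something finite uniformly in $x$. It is precisely this pairing that converts the summability of $|DT^n(c_1)|^{-1/r}$ along the critical orbit into the quantitative bound \eqref{eq:NvS}, and tracking the distortion constants through compositions of safe and critical pieces --- in particular, ensuring that nearby critical returns do not destroy Koebe space --- is the delicate part of the argument.
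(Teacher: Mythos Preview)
The paper does not prove Theorem~\ref{thrm1.1}: it is quoted from Nowicki and Van Strien \cite{NowvSt} as background, introduced by ``the techniques used in this paper are motivated by the work of Nowicki and Van Strien \cite{NowvSt} where the following result has been proven.'' There is therefore no proof in the paper to compare your proposal against.

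That said, your sketch is broadly faithful to the strategy of \cite{NowvSt}: Koebe distortion from negative Schwarzian, the split into branches that stay away from $c$ versus those that pass close to $c$, the $\lambda(A)^{1/r}$ contribution from inverting the order-$r$ critical point, and summability \eqref{eq:growth} to control the critical sum, followed by the distribution-function argument for $L^{\tau-}$. One caveat: invoking Ma\~n\'e's hyperbolicity lemma is not how \cite{NowvSt} proceeds, and it is not obviously available here since that lemma typically requires all periodic points to be repelling (or a Collet--Eckmann-type condition), which is not assumed. Nowicki and Van Strien instead work directly with the geometry of the critical orbit and the Koebe principle to obtain the needed expansion along pullbacks; the summability condition \eqref{eq:growth} substitutes for uniform hyperbolicity on safe regions. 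If you want to see a closely related argument carried out in detail, the paper's own proof of Theorem~\ref{thrm3.1} in Section~\ref{sec3.3} (Proposition~\ref{prop0.6}, Lemma~\ref{lemma0.8}, Lemma~\ref{l:c5}, Proposition~\ref{prop15}) is an adaptation of exactly this scheme to the random setting, and reading it would show you how the bookkeeping of critical passages is organised without appealing to Ma\~n\'e.
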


Formally this result is not immediately applicable to the good maps we introduced. The difference, however, is not principal and the conclusion remains exactly the same, the main reason being that the conditions (G1) and (G4)  imply the growth rate \eqref{eq:growth}, and hence any good map admits a unique probability acim.

\begin{figure}[h]
\subfigure[\tiny $x \mapsto 2x \pmod 1$]{
\begin{tikzpicture}[scale =2]
\draw(-.01,0)--(1.01,0)(0,-.01)--(0,1.01);
\draw[dotted](.5,0)--(.5,1)(0,1)--(1,1);
\draw[line width=.4mm, green!50!blue!70!black] (0,0)--(.5,1)(.5,0)--(1,1);
\node[below] at (1,0){\tiny 1};
\node[below] at (.5,0){\tiny $\frac12$};
\node[below] at (-.01,0){\tiny 0};
\node[left] at (0,1){\tiny 1};
\end{tikzpicture}
}
\hspace{.5cm}
\subfigure[\tiny $x \mapsto 4x(1-x)$]{
\begin{tikzpicture}[scale =2]
\draw(-.01,0)--(1.01,0)(0,-.01)--(0,1.01);
\draw[dotted](.5,0)--(.5,1)(0,1)--(.5,1);
\draw[line width=.4mm, green!50!blue!70!black, smooth, samples =20, domain=0:1] plot(\x, { 4* \x * (1-\x)});
\node[below] at (1,0){\tiny 1};
\node[below] at (.5,0){\tiny $\frac12$};
\node[below] at (-.01,0){\tiny 0};
\node[left] at (0,1){\tiny 1};
\end{tikzpicture}
}
\hspace{.5cm}
\subfigure[\tiny $x \mapsto \frac12 - 4\big(x-\frac12\big)^3 $]{
\begin{tikzpicture}[scale =2]
\draw(-.01,0)--(1.01,0)(0,-.01)--(0,1.01);
\draw[dotted](.5,0)--(.5,.5)(.5,.5)--(0,.5);
\draw[line width=.4mm, green!50!blue!70!black, smooth, samples =20, domain=0:1] plot(\x, { 0.5 - 4*( \x-0.5)^3});
\node[below] at (1,0){\tiny 1};
\node[below] at (.5,0){\tiny $\frac12$};
\node[below] at (-.01,0){\tiny 0};
\node[left] at (0,1){\tiny 1};
\node[left] at (0,.5){\tiny $\frac12$};
\end{tikzpicture}
}
\hspace{.5cm}
\subfigure[\tiny $x \mapsto 2x(1-x)$]{
\begin{tikzpicture}[scale =2]
\draw(-.01,0)--(1.01,0)(0,-.01)--(0,1.01);
\draw[dotted](.5,0)--(.5,.5)(.5,.5)--(0,.5);
\draw[line width=.4mm, green!50!blue!70!black, smooth, samples =20, domain=0:1] plot(\x, { 2* \x * (1-\x)});
\node[below] at (1,0){\tiny 1};
\node[below] at (.5,0){\tiny $\frac12$};
\node[below] at (-.01,0){\tiny 0};
\node[left] at (0,1){\tiny 1};
\node[left] at (0,.5){\tiny $\frac12$};
\end{tikzpicture}
}
\caption{Four maps with critical point $c = \frac12$. (a) and (b) show two good maps, while in (c) and (d) we see the graphs of two bad maps.}
\label{T:examples}
\end{figure}
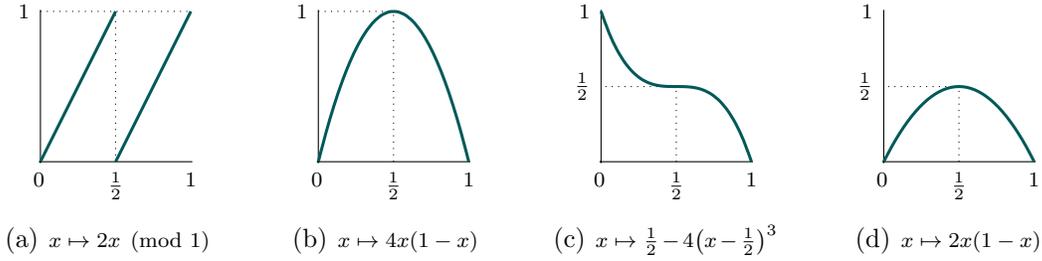

\medskip
A map $T_b:[0,1]\to [0,1]$ is in the class of {\em bad maps}, denoted by $\mathfrak{B}$, if
\begin{enumerate}[(B1)]
\item $T_b|_{(0,c)}$ and $T_b|_{(c,1)}$ are $C^3$ diffeomorphisms onto $(0,c)$ or $(c,1)$, $T_b(\{0,1\}) \subseteq \{0,1\}$ and $T_b(c) = c$;
\item $T_b$ has non-positive Schwarzian derivative on $[0,c)$ and $(c,1]$;
\item to $T_b$ we can associate three constants $\ell_b > 1$, $0< K_b< 1$ and $M_b > \ell_b$ such that
\begin{align}\label{eqn2.2}
K_b |x-c|^{\ell_b-1} \leq |DT_b(x)| \leq M_b |x-c|^{\ell_b-1};
\end{align}
\item we have $|DT_b(0)|,|DT_b(1)| > 1$.
\end{enumerate}
In particular (B1) implies that $T_b$ is continuous, and that $T_b$ strictly monotone on the intervals $[0,c]$ and $[c,1]$. In contrast to (G3), note that in (B3) we have assumed that $\ell_b$ is not equal to one. This means that $DT_b(c) = 0$, so $c$ is a superattracting fixed point for each bad map. An immediate consequence of the presence of a globally attracting fixed point at $c$ is that the only finite invariant measures are linear combinations of Dirac measures at $0,c$, and $1$. For examples, see Figures~\ref{T:examples}(c) and (d).

\medskip
The random systems we consider in this article are the following. Let $T_1,\ldots,T_N \in \mathfrak G \cup \mathfrak B$ be a finite collection of good and bad maps. Write $\Sigma_G = \{1 \leq j \leq N\, :\,  T_j \in \mathfrak G\}$ and $\Sigma_B = \{1 \leq j \leq N\, :\,  T_j \in \mathfrak B\}$ for the index sets of the good and bad maps respectively and assume that $\Sigma_G,\Sigma_B \neq \emptyset$. Write $\Sigma = \{ 1, \ldots, N \} = \Sigma_G \cup \Sigma_B$. The skew product transformation  or {\em random map} $F$ is defined by
\begin{equation}\label{q:skewproduct}
 F:\Sigma^{\mathbb N} \times [0,1] \to \Sigma^{\mathbb N} \times [0,1], \, (\omega,x) \mapsto (\sigma \omega, T_{\omega_1}(x)),
 \end{equation}
where $\sigma$ denotes the left shift on sequences in $\Sigma^{\mathbb N}$. Let $\mathbf p = (p_j)_{j \in \Sigma}$ be a probability vector representing the probabilities with which we choose the maps $T_j$, $j \in \Sigma$. We will consider measures of the form $\mathbb P \times \mu_{\mathbf p}$, where $\mathbb P$ is the $\mathbf p$-Bernoulli measure on $\Sigma^\mathbb N$ and $\mu_{\mathbf p}$ is a Borel measure on $[0,1]$ absolutely continuous with respect to $\lambda$ and  satisfying
\begin{align}\label{eqn9}
\sum_{j \in \Sigma} p_j \mu_{\mathbf p}(T_j^{-1}A) = \mu_{\mathbf p}(A), \qquad \text{for all Borel sets $A \subseteq [0,1]$}.
\end{align}
In this case $\mathbb P \times \mu_{\mathbf p}$ is an invariant measure for $F$ and we say that $\mu_{\mathbf p}$ is a {\em stationary} measure for $F$. We also say that a stationary measure $\mu_{\mathbf p}$ is ergodic for $F$ if $\mathbb P \times \mu_{\mathbf p}$ is ergodic for $F$. Our main results are the following.

\begin{theorem}\label{MAIN} Let $\{T_j: j \in \Sigma\}$ be as above and $\mathbf p = (p_j)_{j \in \Sigma}$ a positive probability vector.
 \begin{enumerate}
\item There exists a unique (up to scalar multiplication) stationary $\sigma$-finite measure $\mu_{\mathbf p}$ for $F$ that is absolutely continuous with respect to the one-dimensional Lebesgue measure $\lambda$. Moreover, this measure is ergodic.
\item The density $\frac{d\mu_{\mathbf p}}{d\lambda}$ is bounded away from zero, is locally Lipschitz on $(0,c)$ and $(c,1)$ and is not in $L^q$ for any $q > 1$.
\end{enumerate}
\end{theorem}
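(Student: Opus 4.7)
My plan is to reduce the problem to analyzing an induced system far from the critical point. Fix a small $\delta > 0$, set $Y = [0, c-\delta] \cup [c+\delta, 1]$, and let $\hat F$ denote the first-return skew product of $F$ to $\Sigma^{\mathbb N} \times Y$. Each branch of $\hat F$ corresponds to a finite word $\omega_1 \cdots \omega_n$ taking $x \in Y$ into $(c-\delta, c+\delta)$ through a sequence of bad-map applications and finally out again through a good map. The last good-map step contributes a derivative of order $|y-c|^{r_g-1}$ which, by (G3) and the superexponential convergence of bad-map orbits to $c$, dominates the total contraction accumulated during the excursion. Hence the induced skew product is uniformly expanding with bounded distortion, and a folklore/Lasota--Yorke type argument (in the spirit of Nowicki--Van Strien) yields a unique ergodic stationary probability measure $\hat\mu_{\mathbf p}$ on $Y$ with Lipschitz density bounded away from zero. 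The Kakutani tower construction lifts $\hat\mu_{\mathbf p}$ to a $\sigma$-finite absolutely continuous stationary measure $\mu_{\mathbf p}$ on $[0,1]$, ergodic for $F$ and unique up to scalar multiples.

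For part (2), the lower bound on $\rho = d\mu_{\mathbf p}/d\lambda$ propagates from $Y$ to the whole of $(0,1)$ by chasing the orbit of an arbitrary base point $y \in Y$ under a suitable finite word that lands close to a given $x$, using distortion control supplied by (G1)--(G4) and (B1)--(B4). The local Lipschitz property on $(0,c)$ and $(c,1)$ follows from the invariance equation $\rho = \sum_j p_j \mathcal L_{T_j} \rho$: on any compact subinterval of $(0,c)$ or $(c,1)$, only preimages bounded away from $c$ contribute, and the $C^3$ regularity of the maps there transfers Lipschitz continuity.

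The main obstacle is showing $\rho \notin L^q$ for every $q>1$. Pick a bad map $T_b$ whose branch on, say, $(0,c)$ is a diffeomorphism onto $(0,c)$ fixing $c$, and for $x \in (0,c)$ close to $c$, let $y_n = (T_b|_{(0,c)})^{-n}(x)$. Condition (B3) yields $|y_n - c| \asymp |x-c|^{1/\ell_b^n}$ and
\begin{equation*}
\prod_{k=1}^n |DT_b(y_k)|^{-1} \asymp |x-c|^{-(1-\ell_b^{-n})}.
\end{equation*}
Iterating the inequality $\rho \geq p_b \, \mathcal L_{T_b}\rho$ along this single branch $n$ times, and choosing $n = n(x) \asymp \log\log(1/|x-c|)/\log \ell_b$ so that $y_n$ stays at a fixed positive distance from $c$ (where $\rho$ is bounded below by the previous paragraph), gives
\begin{equation*}
\rho(x) \gtrsim |x-c|^{-1} \bigl(\log(1/|x-c|)\bigr)^{-\gamma}, \qquad \gamma = -\log p_b/\log \ell_b > 0.
\end{equation*}
For any $q>1$, $\int_{|x-c|<\delta} \rho^q \, d\lambda \gtrsim \int_0^\delta t^{-q}(\log(1/t))^{-\gamma q} \, dt = \infty$, so $\rho \notin L^q$. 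The point requiring the most care is justifying the restriction to this one branch while handling additive contributions from other preimages; that all omitted terms are non-negative makes the lower-bound estimate tractable.
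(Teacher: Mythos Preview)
Your inducing scheme has a genuine gap: the domain $Y=[0,c-\delta]\cup[c+\delta,1]$ is the wrong choice, because it contains the points $0$ and $1$ where the density also blows up. Concretely, consider a return branch of the form $T_g\circ T_{\mathbf b}\circ T_{\omega_1}$ with $\mathbf b\in\Sigma_B^{k}$: as $x$ ranges over the domain, $y_0=T_{\omega_1}(x)$ ranges over $(c-\delta,c+\delta)$, and the chain rule together with (B3), (G3) and Lemma~2.1 gives
\[
|D(T_g\circ T_{\mathbf b})(y_0)|\asymp |y_0-c|^{\ell_{\mathbf b} r_g-1},
\]
which tends to $0$ as $y_0\to c$. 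Thus every such branch has a critical point at $x^*=T_{\omega_1}^{-1}(c)\in Y$; the monotone pieces on either side still have derivative going to $0$ and their images are tiny intervals accumulating at $T_g(c)\in\{0,1\}$. The induced map is therefore neither uniformly expanding nor Markov, and no folklore/Lasota--Yorke argument applies. The paper avoids this by inducing on an interval $J=J_0\cup J_1$ bounded away from \emph{all three} of $0,c,1$ (and simultaneously restricting the symbolic coordinate to a cylinder $[g^\kappa t]$); the initial block $T_g^\kappa$ then guarantees full branches onto $J_0$ or $J_1$, so the Koebe Principle gives bounded distortion and the standard fixed-point argument goes through. The local Lipschitz property then comes for free, since $\psi|_J$ equals the Lipschitz induced density and $\kappa$ can be taken arbitrarily large---your proposed bootstrap via the transfer equation is not needed.

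Your argument for $\rho\notin L^q$, on the other hand, is essentially correct and is a genuinely different route from the paper's. The paper fixes $k$ (depending on $q$) and uses the composition $T_gT_b^k$ to exhibit a blow-up of order $|x-T_g(c)|^{-(1-\ell_b^{-k}r_g^{-1})}$ near $T_g(c)\in\{0,1\}$; you instead let $n=n(x)$ vary and follow only bad-map preimages to get a pointwise lower bound $\rho(x)\gtrsim|x-c|^{-1}(\log(1/|x-c|))^{-\gamma}$ near $c$. Your exponent $\gamma=-\log p_b/\log\ell_b$ is slightly off---the constants $M_b$ from (B3) and $\tilde K$ from Lemma~2.1 each contribute a factor $(\mathrm{const})^n$ and hence an extra power of $\log(1/|x-c|)$---but the corrected exponent is still finite and positive, so the conclusion $\int\rho^q=\infty$ for every $q>1$ survives. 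Your version has the advantage of using only a single bad map and of giving a sharper pointwise lower bound on $\rho$ near $c$.
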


\noindent We call the measure $\mu_{\mathbf p}$ from Theorem~\ref{MAIN} an {\em acs} measure.

\begin{theorem}\label{MAIN2} Let $\{T_j: j \in \Sigma\}$ be as above and $\mathbf p = (p_j)_{j \in \Sigma}$ a positive probability vector. Let $\mu_{\mathbf p}$ be the unique acs measure from Theorem~\ref{MAIN}. Set $\theta = \sum_{b \in \Sigma_B} p_b \ell_b$. Then $\mu_{\mathbf p}$ is finite if and only if $\theta<1$. In this case, there exists a constant $C > 0$ such that
\begin{align}\label{eq:2.10}
\mu_{\mathbf p}(A) \leq C \cdot \sum_{k=0}^{\infty} \theta^k \lambda(A)^{\ell_{\max}^{-k} r_{\max}^{-1}}
\end{align}
for any Borel set $A \subseteq [0,1]$, where $r_{\max} = \max\{r_g: g \in \Sigma_G\}$ and $\ell_{\max} = \max\{\ell_b: b \in \Sigma_B\}$.
\end{theorem}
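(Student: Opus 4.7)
The plan is to prove the Hölder-type estimate \eqref{eq:2.10}, from which $\mu_{\mathbf p}([0,1])\leq C\sum_{k\geq 0}\theta^k$ follows by setting $A=[0,1]$, giving finiteness exactly when $\theta<1$. The converse direction ($\theta\geq 1\Rightarrow \mu_{\mathbf p}([0,1])=+\infty$) I would obtain from a matching pointwise lower bound on $\rho_{\mathbf p}=d\mu_{\mathbf p}/d\lambda$ near $c$.

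For \eqref{eq:2.10}, I would decompose the transfer operator of the random system as $\mathcal L=\mathcal L_G+\mathcal L_B$ into good- and bad-map parts, where $\mathcal L_G f=\sum_{g\in\Sigma_G}p_g\mathcal L_{T_g}f$ and $\mathcal L_B f=\sum_{b\in\Sigma_B}p_b\mathcal L_{T_b}f$. Iterating $\rho_{\mathbf p}=\mathcal L\rho_{\mathbf p}$ on the bad-map term yields
\[
\rho_{\mathbf p}=\sum_{k=0}^{n-1}\mathcal L_B^k\mathcal L_G\rho_{\mathbf p}+\mathcal L_B^n\rho_{\mathbf p},
\]
and $\mathcal L_B^n\rho_{\mathbf p}\to 0$ pointwise away from $c$ because iterated bad-map preimages of a fixed $y\neq c$ are pushed rapidly toward the endpoints $0,1$ (where (B4) gives uniform expansion), giving the decomposition $\rho_{\mathbf p}=\sum_{k\geq 0}\mathcal L_B^k\mathcal L_G\rho_{\mathbf p}$.

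Then, by duality,
\[
\int_A \mathcal L_B^k\mathcal L_G\rho_{\mathbf p}\,d\lambda=\sum_{\substack{(b_1,\ldots,b_k)\in\Sigma_B^k\\ g\in\Sigma_G}}p_{b_1}\cdots p_{b_k}p_g\,\mu_{\mathbf p}\bigl((T_{b_1}\circ\cdots\circ T_{b_k}\circ T_g)^{-1}(A)\bigr).
\]
The key estimate I would establish is a uniform Hölder bound on each composition $H:=T_{b_1}\circ\cdots\circ T_{b_k}\circ T_g$:
\[
\mu_{\mathbf p}\bigl(H^{-1}(A)\bigr)\leq C_0\,\ell_{b_1}\cdots\ell_{b_k}\,\lambda(A)^{1/(r_g\ell_{b_1}\cdots\ell_{b_k})}
\]
with $C_0$ independent of the word and of $A$. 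The chain rule with (G3), (B3) shows that $H$ is piecewise $C^3$ with $2^{k+1}$ monotone branches and critical points of maximal order $r_g\ell_{b_1}\cdots\ell_{b_k}$; non-positive Schwarzian is inherited from (G2), (B2) and preserved under composition, so $H$ admits Koebe distortion estimates. An adaptation of the Nowicki--van Strien argument underlying Theorem \ref{thrm1.1}, now applied to $H$ and transferred from the invariant measure of a single map to the stationary measure $\mu_{\mathbf p}$, yields the displayed bound; the factor $\ell_{b_1}\cdots\ell_{b_k}$ arises from integrating the inverse-branch Jacobian singularities at the critical points of $H$. After multiplication by $p_{b_1}\cdots p_{b_k}p_g$ and summation over $(b_1,\ldots,b_k)\in\Sigma_B^k$ this factor becomes $\theta^k=(\sum_b p_b\ell_b)^k$ by factorisation, and bounding $r_g\leq r_{\max}$, $\ell_{b_i}\leq\ell_{\max}$, $\lambda(A)\leq 1$ in the exponent, then summing over $k$, gives \eqref{eq:2.10}.

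For the case $\theta\geq 1$, I would iterate the inequality $\rho_{\mathbf p}\geq \mathcal L_B\rho_{\mathbf p}$ starting from $\rho_{\mathbf p}\geq c_1>0$ (Theorem \ref{MAIN}(2)). Bad-map preimage asymptotics $|x_{b,\pm}-c|\asymp|y-c|^{1/\ell_b}$ and $|DT_b(x_{b,\pm})|\asymp|y-c|^{(\ell_b-1)/\ell_b}$ iterate to produce, near $c$, the pointwise lower bound
\[
\rho_{\mathbf p}(y)\gtrsim c_1\sum_{k=0}^n\sum_{(b_1,\ldots,b_k)\in\Sigma_B^k}p_{b_1}\cdots p_{b_k}\,|y-c|^{-(1-1/(\ell_{b_1}\cdots\ell_{b_k}))},
\]
whose integral over any interval around $c$ is at least $c_2\sum_{k=0}^n\theta^k$; letting $n\to\infty$ gives infiniteness precisely when $\theta\geq 1$. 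The main obstacle will be the uniform-in-word distortion control for the Hölder estimate: the Koebe constant for $H$ must be independent of the word length $k$, despite the critical orders growing as $\ell_{\max}^k$. Careful use of the preservation of non-positive Schwarzian under composition, together with uniform expansion from (G4) and (B4) away from the critical point $c$, should furnish this uniformity.
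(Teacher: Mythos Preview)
Your outline contains a genuine circularity that undermines the finiteness direction. The key step is the claimed bound
\[
\mu_{\mathbf p}\bigl(H^{-1}(A)\bigr)\leq C_0\,\ell_{b_1}\cdots\ell_{b_k}\,\lambda(A)^{1/(r_g\ell_{b_1}\cdots\ell_{b_k})},
\]
but the left-hand side is measured with $\mu_{\mathbf p}$, the very object you are trying to control. The Nowicki--van Strien argument produces a bound $\mu(A)\leq K\lambda(A)^{1/r}$ for the \emph{invariant} measure of a single map $T$, using either invariance $\mu(A)=\mu(T^{-n}A)$ or the Krylov--Bogolyubov construction from iterates of Lebesgue. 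Here $\mu_{\mathbf p}$ is not invariant under $H$, and without an a priori bound of the type $\mu_{\mathbf p}(B)\leq f(\lambda(B))$ you cannot convert geometric information about $H^{-1}(A)$ into a $\mu_{\mathbf p}$-estimate. Note that $\rho_{\mathbf p}\notin L^\infty$ (indeed $\rho_{\mathbf p}\notin L^q$ for any $q>1$), so you cannot simply replace $\mu_{\mathbf p}$ by $\lambda$. Even your preliminary decomposition $\rho_{\mathbf p}=\sum_{k\geq 0}\mathcal L_B^k\mathcal L_G\rho_{\mathbf p}$ is suspect: showing $\mathcal L_B^n\rho_{\mathbf p}\to 0$ pointwise is delicate since iterated bad-map preimages of $y\neq c$ accumulate at $0$ and $1$, where $\rho_{\mathbf p}$ blows up; the natural $L^1$ argument $\int\mathcal L_B^n\rho_{\mathbf p}=(\sum_b p_b)^n\int\rho_{\mathbf p}\to 0$ already presupposes $\rho_{\mathbf p}\in L^1$.

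The paper breaks this circularity by working not with $\mu_{\mathbf p}$ but with the iterates $\lambda_n=\mathcal P^n\lambda$ of Lebesgue measure, establishing the bound \eqref{eq:2.10} uniformly for all $\lambda_n$ via purely geometric estimates on $\lambda(T_{\mathbf u}^{-1}A)$ (Minimum Principle reduces to neighbourhoods of $0,c,1$; Koebe handles $I_c(\varepsilon)$; tracking the last good symbol in $\mathbf u$ handles $I_0(\varepsilon)$). A Krylov--Bogolyubov limit then produces a stationary \emph{probability} measure $\tilde\mu$ satisfying \eqref{eq:2.10}, hence $\tilde\mu\ll\lambda$, hence $\tilde\mu=\mu_{\mathbf p}$ by the uniqueness in Theorem~\ref{MAIN}. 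Your decomposition by the last good symbol is exactly the right combinatorics, but it must be applied to $\lambda$-preimages, not to $\mu_{\mathbf p}$.

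For $\theta\geq 1$, your density lower-bound approach is plausible in spirit but the constants hidden in $\asymp$ depend on the word $\mathbf b$ (through products of $M_b$, $K_b$, etc.), and you would need to verify these do not swamp the factor $\theta^k$; this is not automatic. The paper instead uses Kac's formula on a suitable induced system: one shows that the expected return time to a small box $[bb]\times(a,\xi)$ near $c$ is infinite, because after $n$ bad maps the orbit is within $2^{-\ell_{\omega_1}\cdots\ell_{\omega_n}}$ of $c$, and it then takes at least order $\ell_{\omega_1}\cdots\ell_{\omega_n}$ further steps to escape; summing $p_{\mathbf b}\ell_{\mathbf b}$ over all bad words gives $\sum_n\theta^n=\infty$.
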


As we shall see in \eqref{eqn3.42} the bound in \eqref{eq:2.10} can be improved by not bounding mixtures $\ell_{\mathbf{b}} r_g = \prod_{i=1}^k \ell_{b_i} r_g$ by their maximal value $\ell_{\max}^k r_{\max}$, but this improvement does not change the qualitative behaviour of the bound.

\medskip
It will become clear that the density $\frac{d\mu_{\mathbf p}}{d\lambda}$ in Theorem \ref{MAIN} blows up to infinity at the points zero and one and also (at least on one side) at $c$. Theorem \ref{MAIN2} says that $\frac{d\mu_{\mathbf p}}{d\lambda}$ is integrable if and only if $\theta$ is small enough, namely $\theta < 1$. This intuitively makes sense since for a smaller value of $\theta$ the attraction of orbits to $c$ is weaker on average and consequently orbits typically spend less time near zero and one once a good map is applied.

\medskip
The inequality \eqref{eq:2.10} is the counterpart of the Nowicki-Van Strien inequality \eqref{eq:NvS}, and naturally gives a substantially worse bound due to the presence of bad maps. It is not immediately clear how much worse \eqref{eq:2.10} is in comparison to \eqref{eq:NvS}. However, the following holds.

\begin{cor} \label{cor2}
Let $ \{T_j: j \in \Sigma\}$ be as above and $\mathbf p = (p_j)_{j \in \Sigma}$ a positive probability vector. Suppose $\theta = \sum_{b \in \Sigma_B} p_b \ell_b < 1$. Then there exist $K > 0$ and $\varkappa > 0$ such that for any Borel set $A \subseteq [0,1]$ with $\lambda(A) \in (0,1)$ one has
\[ \mu_{\mathbf p}(A)\le  K \frac {1}{ \log^\varkappa (1/\lambda(A))}.\]
\end{cor}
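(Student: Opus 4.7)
The plan is to derive the corollary directly from the summable bound \eqref{eq:2.10} of Theorem~\ref{MAIN2} by a cutoff argument. Writing $s := \lambda(A) \in (0,1)$, $L := \log(1/s)$, $\alpha := \ell_{\max} > 1$, $r := r_{\max} \ge 1$, and using $\lambda(A)^{\alpha^{-k} r^{-1}} = \exp(-\alpha^{-k} L/r)$, the estimate \eqref{eq:2.10} becomes
\[\mu_{\mathbf p}(A) \le C \sum_{k=0}^{\infty} \theta^k \exp(-\alpha^{-k} L/r).\]
In this series the exponential factor is close to $1$ for large $k$ and extremely small for small $k$, while $\theta^k$ decays geometrically. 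The idea is to split the sum at a cutoff $k^{*}$ chosen as a function of $L$ so as to balance these two effects.

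Concretely, I would fix any $q \in (0,1)$ and set $k^{*} := \lfloor q \log_\alpha L \rfloor$ for all $L$ large enough that $k^{*} \ge 0$. For $k \le k^{*}$, the inequality $\alpha^{-k} L \ge \alpha^{-k^{*}} L \ge L^{\,1-q}$ gives
\[\sum_{k=0}^{k^{*}} \theta^k \exp(-\alpha^{-k} L/r) \le \frac{\exp(-L^{\,1-q}/r)}{1-\theta},\]
which decays faster than any negative power of $L$. For $k > k^{*}$, bounding the exponential trivially by $1$ and using $k^{*} \ge q \log_\alpha L - 1$ yields
\[\sum_{k > k^{*}} \theta^k \exp(-\alpha^{-k} L/r) \le \frac{\theta^{k^{*}+1}}{1-\theta} \le \frac{L^{-q\gamma}}{1-\theta},\]
where $\gamma := -\log\theta/\log\alpha > 0$ and $\theta^{q\log_\alpha L} = L^{-q\gamma}$ is the key computation.

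Setting $\varkappa := q\gamma$ and combining the two estimates, I would conclude $\mu_{\mathbf p}(A) \le K/\log^{\varkappa}(1/\lambda(A))$ for all $A$ with $\lambda(A)$ sufficiently small. The complementary regime where $\lambda(A)$ is bounded away from $0$ is handled by the trivial bound $\mu_{\mathbf p}(A) \le \mu_{\mathbf p}([0,1]) < \infty$ (finiteness from Theorem~\ref{MAIN2} since $\theta<1$): there $1/\log^{\varkappa}(1/\lambda(A))$ is uniformly bounded below by a positive constant, so enlarging $K$ absorbs this case. I do not expect any real obstacle: once \eqref{eq:2.10} is in hand this is simply a routine balancing of two regimes in a series, and neither $q \in (0,1)$ nor $K$ needs to be optimal.
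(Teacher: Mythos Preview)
Your proof is correct and follows essentially the same approach as the paper: both start from the bound \eqref{eq:2.10}, split the series at a cutoff index depending on $\log(1/\lambda(A))$, control the tail by the geometric series in $\theta$, and control the head by the last (largest) exponential factor. The only cosmetic difference is that the paper chooses its cutoff $N$ so that $\theta^N \approx x^{1/\ell^N}$ exactly balances head and tail, whereas you introduce a free parameter $q\in(0,1)$; as a pleasant side effect your $\varkappa = q\,\log(1/\theta)/\log\ell_{\max}$ can be taken arbitrarily close to $\log(1/\theta)/\log\ell_{\max}$, slightly sharper than the paper's explicit $\varkappa = \log(1/\theta)/(1+\log\ell_{\max})$.
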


\medskip
Moreover, the acs measure $\mu_{\mathbf p}$ from Theorem~\ref{MAIN} depends continuously on the probability vector $\mathbf p \in \mathbb R^N$.

\begin{cor} \label{cor}
Let $ \{T_j: j \in \Sigma\}$ be as above. For each $n \ge 0$, let $\mathbf p_n = (p_{n,j})_{j \in \Sigma}$ be a positive probability vector such that $\sup_{n}\sum_{b \in \Sigma_B} p_{n, b}  \ell_b<1$ and assume that $\lim_{n\to \infty}\mathbf p_n=\mathbf p$ in $\mathbb R_+^N$. Then the sequence $\mu_{\mathbf p_n}$ converges weakly to $\mu_{\mathbf p}$.
\end{cor}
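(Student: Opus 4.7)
The strategy is to combine weak compactness of probability measures on the compact interval $[0,1]$ with the uniqueness assertion in Theorem~\ref{MAIN}. Since $\sup_n \theta_n < 1$, we have $\theta = \sum_{b\in\Sigma_B} p_b \ell_b < 1$ as well, so by Theorem~\ref{MAIN2} every $\mu_{\mathbf p_n}$ and $\mu_{\mathbf p}$ is finite; normalize each to total mass $1$. As $[0,1]$ is compact the family $\{\mu_{\mathbf p_n}\}$ is automatically tight, so Prokhorov's theorem extracts from any subsequence a further weakly convergent sub-subsequence with some probability limit $\nu$. It then suffices to show that every such $\nu$ equals $\mu_{\mathbf p}$.

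The crucial technical point is that the bound \eqref{eq:2.10} holds uniformly in $n$. Setting $\theta_* := \sup_n \theta_n < 1$, the exponents $r_{\max}$ and $\ell_{\max}$ are determined by the fixed family $\{T_j\}$, and one expects from the construction underlying Theorem~\ref{MAIN2} that the constant $C$ in \eqref{eq:2.10} depends continuously on $\mathbf p$ on the compact region $\{\mathbf p : \sum_{b \in \Sigma_B} p_b \ell_b \le \theta_*\}$. This produces a single continuous function $\psi:[0,1]\to\mathbb R_{\ge 0}$ with $\psi(0)=0$ such that $\mu_{\mathbf p_n}(A) \le \psi(\lambda(A))$ for every Borel $A$ and every $n$. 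Applying this along open neighborhoods and invoking outer regularity of Lebesgue measure together with Portmanteau gives $\nu(A) \le \psi(\lambda(A))$, so $\nu \ll \lambda$; in particular $\nu(\{0\}) = \nu(\{c\}) = \nu(\{1\}) = 0$, which is what is needed to deal with the possible discontinuities of the $T_j$ at these three points.

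With this in hand, pass to the limit in the stationarity identity
\[
\int \varphi\, d\mu_{\mathbf p_{n_k}} = \sum_{j\in\Sigma} p_{n_k,j} \int \varphi \circ T_j\, d\mu_{\mathbf p_{n_k}},\qquad \varphi \in C([0,1]).
\]
Each function $\varphi \circ T_j$ is bounded with discontinuity set contained in $\{0,c,1\}$, hence is $\nu$-a.e.\ continuous by the previous paragraph. The extended continuous mapping theorem for weak convergence then gives $\int \varphi \circ T_j\,d\mu_{\mathbf p_{n_k}} \to \int \varphi \circ T_j\,d\nu$, and combining with $p_{n_k,j} \to p_j$ shows that $\nu$ is an absolutely continuous stationary probability measure for $F$ with parameter $\mathbf p$. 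By the uniqueness statement of Theorem~\ref{MAIN}, $\nu = \mu_{\mathbf p}$, completing the subsequence argument.

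The main obstacle is the uniformity step in the second paragraph: it requires revisiting the proof of Theorem~\ref{MAIN2} and verifying that the constant appearing there depends continuously (or at least locally boundedly) on $\mathbf p$ throughout the relevant compact set. The remaining ingredients---tightness, the no-atom conclusion, passage to the limit, and invocation of uniqueness---are standard once that uniform bound is established.
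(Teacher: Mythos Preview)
Your approach is correct and matches the paper's proof: extract a weak subsequential limit $\nu$, use a uniform-in-$n$ version of \eqref{eq:2.10} together with Portmanteau to get $\nu \ll \lambda$, pass to the limit in the stationarity identity, and invoke the uniqueness from Theorem~\ref{MAIN}. The paper resolves the ``main obstacle'' you flag by tracing the constants through Section~\ref{s:in-finite} and observing that $C$ in \eqref{eq:2.10} depends on $\mathbf p$ only through $\big(\sum_{g \in \Sigma_G} p_g\big)^{-1}$ and $\big(\min_{g \in \Sigma_G} p_g\big)^{-1}$, both of which stay bounded along $\mathbf p_n \to \mathbf p \in \mathbb R_+^N$; for the stationarity step the paper handles the possible discontinuity of $T_g$ at $c$ by an explicit continuous approximant $\varphi_\delta$ (linear on $(c-\delta,c+\delta)$) with error controlled uniformly in $n$ by the same bound, which is the hands-on version of the continuous mapping theorem you invoke.
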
 

\medskip
In (B3) we have assumed that for any bad map $T_b$ the corresponding value $\ell_b$ is not equal to one. Note that a bad map $T_b$ for which we allow $\ell_b = 1$ satisfies $|DT_b(c)| > 0$, so in this case $c$ is an attracting fixed point for $T_b$ but not superattracting. It should not come as a surprise that results similar to Theorem~\ref{MAIN} and Theorem~\ref{MAIN2} also hold in case some or all of the bad maps $T_b$ have $\ell_b=1$. The proofs presented for these theorems, however, do not immediately carry over. In the last section we explain how the results are affected in case some or all maps $T_b$ satisfy $\ell_b=1$ and what the necessary changes in the proofs are. 

\medskip
The proofs use a mixture of techniques. For the existence result from Theorem~\ref{MAIN} we use an inducing scheme. This approach is inspired by \cite{AbbGhaHom}, but the choice of the inducing domain needed some care. With the help of Kac's Lemma we then obtain that the acs measure is infinite in case $\theta \ge 1$. To prove that this measure is finite for $\theta <1$ we use an approach similar to the one employed in \cite{NowvSt}. The main difficulty here is that it may take an arbitrarily long time before the superattracting fixed point is mapped onto the repelling orbit by one of the good maps, which decreases the regularity of the density of the acs measure.
 
\medskip
The paper is organised as follows. In Section \ref{sec:mr} we list some preliminaries and first consequences of the conditions (G1)--(G4) and (B1)--(B4). Section \ref{sec:PT2.1} is devoted to the proof of Theorem~\ref{MAIN} and in Section~\ref{s:in-finite} we prove Theorem~\ref{MAIN2}. In Section~\ref{sec:cr} we prove Corollaries \ref{cor2} and \ref{cor} and explain what the analogues of Theorem~\ref{MAIN} and \ref{MAIN2} are in case $\ell_b=1$ for one or more $b \in \Sigma$ and how the proofs of Theorem~\ref{MAIN} and \ref{MAIN2} need to be modified to get these results. We end with some final remarks.

\section*{Acknowledgments}
We would like to thank Sebastian van Strien for useful suggestions.

\section{Preliminaries}\label{sec:mr}

We start by introducing some notation and collecting some general preliminaries.

\subsection{Words, sequences and invariant measures}
For any finite subset $\Sigma \subseteq \mathbb N$ and any $n \ge 1$ we use $\mathbf u \in \Sigma^n$ to denote a {\em word} $\mathbf u = u_1 \cdots u_n$. $\Sigma^0$ contains only the empty word, which we denote by $\epsilon$. On the space of infinite sequences $\Omega = \Sigma^\mathbb N$ we use
\[ [\mathbf u] = [u_1 \cdots u_n] = \{\omega \in \Omega: \omega_1 = u_1, \ldots, \omega_n = u_n\}\]
to denote the {\em cylinder set} corresponding to $\mathbf u$. The notation $|\mathbf u|$ indicates the length of $\mathbf u$, so $|\mathbf u|=n$ for $\mathbf u \in \Sigma^n$. For two words $\mathbf u \in \Sigma^n$ and $\mathbf v \in \Sigma^m$ the concatenation of $\mathbf u$ and $\mathbf v$ is denoted by $\mathbf{uv} \in \Sigma^{n+m}$. For a probability vector $p = (p_j)_{j \in \Sigma}$ and $\mathbf u \in \Sigma^n$ we write $p_{\mathbf u} = \prod_{i=1}^n p_{u_i} $ with $p_{\mathbf u}=0$ if $n=0$.  We use $\sigma$ to denote the {\em left shift} on $\Omega$: for $\omega\in\Omega$ and all $n\in \mathbb N$, $(\sigma \omega)_n = \omega_{n+1}$. 

\medskip
Given a finite family of Borel measurable maps $\{ T_j: [0,1] \to [0,1]\}_{j \in \Sigma}$, the skew product or the random map $F$ is defined by\[ F: \Omega \times [0,1] \to \Omega \times [0,1], \, (\omega,x) \mapsto \big( \sigma \omega, T_{\omega_1}(x)\big).\]
We use the following notation for the iterates of the maps $T_j$. For each $\omega \in \Omega$ and each $n \in \mathbb{N}_0$ define
\begin{align}
T_{\omega_1 \cdots \omega_n }(x) = T_\omega^n(x) =  \begin{cases}  x, & \text{if}\quad n=0,\\
T_{\omega_n}\circ  T_{\omega_{n-1}}\circ \cdots \circ T_{\omega_1}(x), & \text{for } n\ge 1.
\end{cases}
\end{align}
With this notation, we can write the iterates of the random system $F$ as
\begin{align}
F^n(\omega,x) = (\sigma^n \omega, T_{\omega}^n(x)).
\end{align}

The following lemma on invariant measures for $F$ holds.
\begin{lemma}[\cite{Mor85}, see also Lemma 3.2 of \cite{Fro99}]\label{l:productmeasure}
If all maps $T_j$ are non-singular with respect to $\lambda$ (that is, $\lambda(A) =0$ if and only if $\lambda(T_j^{-1}A) = 0$ for all $A \subseteq [0,1]$ measurable) and $\mathbb P$ is the $\mathbf p$-Bernoulli measure on $\Omega$ for some positive probability vector $\mathbf p$, then the $\mathbb{P} \times \lambda$-absolutely continuous $F$-invariant measures are precisely the measures of the form $\mathbb{P} \times \mu$ where $\mu$ is absolutely continuous w.r.t.~$\lambda$ and satisfies
\begin{align}\label{eqn9}
\sum_{j \in \Sigma} p_j \mu (T_j^{-1}A) = \mu (A) \qquad \text{for all Borel sets $A$}.
\end{align}
\end{lemma}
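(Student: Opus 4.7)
The plan is to prove each direction of the biconditional separately. The forward direction is a routine verification, while the reverse direction is the substantive content and hinges on the Bernoulli structure of $\mathbb{P}$.

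\emph{Forward direction.} Suppose $\mu\ll\lambda$ satisfies \eqref{eqn9}. The measurable rectangles $[\mathbf u]\times A$, with $\mathbf u = u_1\cdots u_n$ a finite word and $A\subseteq[0,1]$ Borel, generate the product $\sigma$-algebra on $\Omega\times[0,1]$. The identity
\[
F^{-1}([\mathbf u]\times A) = \bigsqcup_{j\in\Sigma}[j\,u_1\cdots u_n]\times T_j^{-1}(A),
\]
together with \eqref{eqn9}, gives $(\mathbb P\times\mu)(F^{-1}([\mathbf u]\times A)) = p_{\mathbf u}\sum_{j\in\Sigma}p_j\mu(T_j^{-1}A) = p_{\mathbf u}\mu(A) = (\mathbb P\times\mu)([\mathbf u]\times A)$, extended to all Borel sets by Dynkin's $\pi$-$\lambda$ theorem. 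Absolute continuity of $\mathbb P\times\mu$ with respect to $\mathbb P\times\lambda$ is immediate from $\mu\ll\lambda$.

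\emph{Reverse direction.} Let $\nu\ll\mathbb P\times\lambda$ be $F$-invariant. The $\Omega$-marginal $\pi(B):=\nu(B\times[0,1])$ is $\sigma$-invariant (since $F^{-1}(B\times[0,1]) = \sigma^{-1}B\times[0,1]$) and $\pi\ll\mathbb P$; as $(\Omega,\sigma,\mathbb P)$ is a Bernoulli shift and therefore $\sigma$-ergodic, any $\sigma$-invariant measure absolutely continuous with respect to $\mathbb P$ is proportional to $\mathbb P$, so $\pi = c\mathbb P$, and after rescaling we may take $\pi = \mathbb P$. Disintegrating, $\nu = \int_\Omega \nu_\omega\,d\mathbb P(\omega)$ with $\nu_\omega\ll\lambda$ for $\mathbb P$-a.e.\ $\omega$, and a standard calculation translates $F$-invariance into the equivariance
\[
(T_{\omega_1})_*\nu_\omega = \nu_{\sigma\omega}\quad\text{for $\mathbb P$-a.e.\ }\omega.
\]

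The decisive step—where I expect the main difficulty to lie—is showing that $\omega\mapsto\nu_\omega$ is $\mathbb P$-almost surely constant. I would first reduce to the case of $F$-ergodic $\nu$ via the ergodic decomposition, using that the class of product measures $\mathbb P\times\mu$ with $\mu$ stationary is convex. Assuming $F$-ergodicity, I would apply the Birkhoff ergodic theorem to test functions of product type $f(\omega,x) = \mathbf 1_{[\mathbf u]}(\omega)\varphi(x)$ with $\mathbf u$ a finite word and $\varphi \in C([0,1])$, and exploit the weak mixing of the Bernoulli base to decouple the $\omega$- and $x$-dependencies of the Cesàro averages. This decoupling forces the map $\omega \mapsto \int\varphi\,d\nu_\omega$ to be $\mathbb P$-a.s.\ constant for each $\varphi$ in a countable dense subset of $C([0,1])$, whence $\nu_\omega$ coincides $\mathbb P$-almost surely with a fixed probability measure $\mu$ on $[0,1]$. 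Substituting back into the equivariance and integrating over $\omega$ produces $\sum_{j\in\Sigma}p_j (T_j)_*\mu = \mu$, i.e.,\ \eqref{eqn9}, while $\mu\ll\lambda$ follows from $\nu\ll\mathbb P\times\lambda$ via the disintegration.
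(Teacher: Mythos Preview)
The paper does not prove this lemma; it is quoted from Morita and Froyland with no argument given. So there is no in-paper proof to compare against, and I can only assess your attempt on its own merits.

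Your forward direction is correct, and the skeleton of your reverse direction (the $\Omega$-marginal is $\mathbb P$ by ergodicity of the Bernoulli shift, disintegrate, derive the equivariance $(T_{\omega_1})_*\nu_\omega=\nu_{\sigma\omega}$, reduce to the $F$-ergodic case) is sound. The gap is in the ``decisive step.'' Invoking weak mixing of $(\sigma,\mathbb P)$ to ``decouple the $\omega$- and $x$-dependencies of the Ces\`aro averages'' is not a proof: weak mixing controls correlations of functions of $\omega$ alone and says nothing about the pointwise product $\tfrac1N\sum_n \mathbf 1_{[\mathbf u]}(\sigma^n\omega)\,\varphi(T_\omega^n x)$. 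Two bounded sequences can each Ces\`aro-converge while their product Ces\`aro-converges to something other than the product of limits, so the decoupling you want does not follow from the three Birkhoff limits you have in hand.

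What does make the argument work is a sharper feature of the Bernoulli base than weak mixing: $\mathbf 1_{[\mathbf u]}(\sigma^n\omega)$ depends only on $\omega_{n+1},\dots,\omega_{n+|\mathbf u|}$, while $\varphi(T_\omega^n x)$ depends only on $(\omega_1,\dots,\omega_n,x)$, and under $\mathbb P\times\lambda$ these are independent. A second-moment computation then gives
\[
\Big\|\tfrac1N\sum_{n<N}\big(\mathbf 1_{[\mathbf u]}(\sigma^n\omega)-\mathbb P([\mathbf u])\big)\varphi(T_\omega^n x)\Big\|_{L^2(\mathbb P\times\lambda)}^2=O(|\mathbf u|/N),
\]
since only cross-terms with $|n-m|<|\mathbf u|$ survive. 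Passing to an a.e.\ convergent subsequence (hence $\nu$-a.e., as $\nu\ll\mathbb P\times\lambda$) and combining with your two Birkhoff limits yields $\int_{[\mathbf u]}\nu_\omega(\varphi)\,d\mathbb P(\omega)=\mathbb P([\mathbf u])\,\mu(\varphi)$, whence $\nu_\omega\equiv\mu$ a.s. One further small point: the lemma (and the paper's use of it) allows $\sigma$-finite measures, whereas your rescaling of the marginal, disintegration into probability measures, and appeal to Birkhoff all tacitly assume $\nu$ is finite; you should flag the reduction to the finite case.
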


Now let $(X, \mathcal F, m)$ be a measure space and $T: X \to X$ measurable and non-singular with respect to $m$. For a set $Y \in \mathcal F$ such that  $0 < m(Y) < \infty$ and $m \big( X \setminus \bigcup_{n \ge 1} T^{-n} Y \big)=0$, the {\em first return time map} $\varphi_Y: Y \to \mathbb N \cup \{ \infty\}$ given by
\begin{equation}\label{q:frtm}
\varphi_Y(y) = \inf \{ n \ge 1 \, : \,  T^n(y) \in Y \}
\end{equation}
is finite $m$-a.e.~on $Y$,  and moreover $m$-a.e.~$y\in Y$ returns to $Y$ infinitely often. If we remove from $Y$ the $m$-null set of points that return to $Y$ only finitely many times, and for convenience call this set $Y$ again, then we can define the {\em induced transformation} $T_Y:Y \to Y$ by
\[ T_Y (y) = T^{\varphi_Y(y)}(y).\]
The following result can be found in e.g.~\cite[Proposition 1.5.7]{Aar97}. Note that this statement asks for $T$ to be conservative. This is not used in the proof however and the condition $m \big( X \setminus \bigcup_{n \ge 1} T^{-n} Y \big)=0$ is enough to guarantee that the induced transformation is well defined.

\begin{lemma}[see e.g.~Proposition 1.5.7.~in \cite{Aar97}]\label{l:inducedmeasure}
Let $T$ be a measurable and non-singular transformation on a measure space $(X, \mathcal F, m)$ and let $Y \in \mathcal F$ be such that $0 < m(Y) < \infty$ and $m \big(X \setminus \bigcup_{n \ge 1} T^{-n}Y\big)=0$. If $\nu \ll m|_Y$ is a finite invariant measure for the induced transformation $T_Y$, then the measure $\mu$ on $(X, \mathcal F, m)$ defined by
\[ \mu (B) = \sum_{k \ge 0} \nu \Big(Y \cap T^{-k}B \setminus \bigcup_{j=1}^k T^{-j}Y \Big)\]
for $B \in \mathcal F$ is $T$-invariant, absolutely continuous with respect to $m$ and $\mu|_Y = \nu$.
\end{lemma}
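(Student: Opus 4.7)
\emph{Plan.} Write $C_k(B) = Y \cap T^{-k}B \setminus \bigcup_{j=1}^k T^{-j}Y$ for $k \ge 0$, with the convention that the empty union is $\emptyset$, so the definition of $\mu$ reads $\mu(B) = \sum_{k \ge 0} \nu(C_k(B))$. Each map $B \mapsto C_k(B)$ preserves countable disjoint unions, and a countable sum of such maps is again a countably additive measure; thus $\mu$ is a (positive) measure on $\mathcal F$. Absolute continuity is automatic: if $m(B) = 0$, then the non-singularity of $T$ forces $m(T^{-k}B) = 0$ for every $k \ge 0$, and $\nu \ll m|_Y$ then gives $\nu(C_k(B)) = 0$ for every $k$, whence $\mu(B) = 0$.

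For the identity $\mu|_Y = \nu$, I would restrict to $B \subseteq Y$: then for every $k \ge 1$, $T^{-k}B \subseteq T^{-k}Y \subseteq \bigcup_{j=1}^k T^{-j}Y$, so $C_k(B) = \emptyset$. Only the $k = 0$ summand survives, giving $\mu(B) = \nu(Y \cap B) = \nu(B)$.

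The real work is the $T$-invariance $\mu(T^{-1}B) = \mu(B)$. The idea is to decompose each summand of $\mu(T^{-1}B) = \sum_{k \ge 0} \nu(C_k(T^{-1}B))$ according to whether the point already re-enters $Y$ at time $k+1$ or not:
\begin{align*}
C_k(T^{-1}B)
= \Bigl( Y \cap T^{-(k+1)}(B \cap Y) \setminus \bigcup_{j=1}^{k} T^{-j}Y \Bigr) \sqcup C_{k+1}(B).
\end{align*}
The first piece is precisely the set of $y \in Y$ with first-return time $\varphi_Y(y) = k+1$ and $T_Y(y) \in B \cap Y$. Summing over $k \ge 0$, the second pieces telescope to $\mu(B) - \nu(C_0(B)) = \mu(B) - \nu(B \cap Y)$; the first pieces are pairwise disjoint (different values of $\varphi_Y$) and their union is $T_Y^{-1}(B \cap Y)$ up to the $\nu$-null set $\{\varphi_Y = \infty\}$. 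The $T_Y$-invariance of $\nu$ then contributes $\nu(T_Y^{-1}(B \cap Y)) = \nu(B \cap Y)$, and adding the two contributions cancels the $\nu(B \cap Y)$ terms and leaves $\mu(T^{-1}B) = \mu(B)$.

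The only subtle point is the legitimacy of treating $\{\varphi_Y = k+1\}_{k \ge 0}$ as an a.e.\ partition of $Y$: this is where the hypothesis $m\bigl(X \setminus \bigcup_{n\ge 1} T^{-n}Y\bigr) = 0$ is used, combined with non-singularity of $T$ and $\nu \ll m|_Y$, to conclude $\varphi_Y < \infty$ $\nu$-a.e.\ on $Y$. Once this is in hand, the decomposition is exhaustive and the rest of the argument is purely formal bookkeeping.
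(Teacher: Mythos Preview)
The paper does not supply its own proof of this lemma; it simply cites Proposition~1.5.7 in Aaronson's book and uses the result as a black box. Your argument is correct and is essentially the standard proof: decompose $C_k(T^{-1}B)$ according to whether or not the orbit has returned to $Y$ by time $k+1$, reindex the ``not yet returned'' pieces to recover $\mu(B)-\nu(B\cap Y)$, and use the $T_Y$-invariance of $\nu$ on the ``just returned'' pieces to get $\nu(B\cap Y)$ back. Two small remarks: (i) for the finiteness of $\varphi_Y$ $\nu$-a.e.\ you do not actually need non-singularity, since $\{\varphi_Y=\infty\}=Y\setminus\bigcup_{n\ge1}T^{-n}Y$ is already contained in the $m$-null set $X\setminus\bigcup_{n\ge1}T^{-n}Y$ and $\nu\ll m|_Y$; (ii) the passage from $\sum_{k\ge0}\nu(C_{k+1}(B))$ to $\mu(B)-\nu(C_0(B))$ is a reindexing rather than a telescoping, but the computation is of course right.
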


We will also use the following result on the first return time.
\begin{lemma}[Kac's Formula, see e.g.~1.5.5.~in \cite{Aar97}]\label{l:kac}
Let $T$ be a conservative, ergodic, measure preserving transformation on a measure space $(X, \mathcal F, m)$. Let $Y \in \mathcal F$ be such that $0 < m(Y) < \infty$ and let $\varphi_Y$ be the first return map to $Y$. Then $\int_Y \varphi_Y \, d m = m(X)$.
\end{lemma}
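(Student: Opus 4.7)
The plan is to use the standard two-fold orbit decomposition of $X$: one partition of $Y$ by first return time, and one partition of $X$ by first entry time into $Y$. Concretely, for $n \geq 1$ I would set $Y_n = \{y \in Y : \varphi_Y(y) = n\}$, and for $k \geq 0$ set $D_0 = Y$ and $D_k = \{x \in X : T^k x \in Y \text{ and } T^i x \notin Y \text{ for } 0 \leq i < k\}$. Conservativity gives $\varphi_Y < \infty$ $m$-a.e.\ on $Y$, so $Y = \bigsqcup_{n \geq 1} Y_n$ modulo null sets; and conservativity together with $m(Y) > 0$ forces $\bigcup_{n \geq 0} T^{-n}Y$ to have full measure, giving $X = \bigsqcup_{k \geq 0} D_k$ modulo null sets.

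The key computation is the recursion
\[
m(D_k) \;=\; m(Y_{k+1}) + m(D_{k+1}).
\]
This comes from expanding $T^{-1}(D_k) = \{x : T^{k+1} x \in Y,\; T^j x \notin Y \text{ for } 1 \leq j \leq k\}$, observing that its intersection with $Y$ is exactly $Y_{k+1}$ and its intersection with $X \setminus Y$ is exactly $D_{k+1}$, and then invoking measure-preservation $m(T^{-1} D_k) = m(D_k)$. Unfolding from $m(D_0) = m(Y) < \infty$ gives $m(D_k) = \sum_{j > k} m(Y_j) = m(\{\varphi_Y \geq k+1\})$ and, incidentally, $m(D_k) \to 0$.

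Assembling the pieces then closes the argument: by a Fubini-type rearrangement of the sum $\sum_n n\, m(Y_n)$,
\[
\int_Y \varphi_Y \, dm \;=\; \sum_{k \geq 1} m(\{\varphi_Y \geq k\}) \;=\; \sum_{k \geq 0} m(D_k) \;=\; m(X).
\]

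The subtlest point, I expect, will be establishing the full-measure decomposition $X = \bigsqcup_{k \geq 0} D_k$: it relies on the recurrence content of conservativity (i.e.\ that $m$-a.e.\ orbit hits $Y$), not merely on measure-preservation or ergodicity in isolation. The case $m(X) = \infty$ is handled for free, since all identities above are equalities in $[0,\infty]$ and the only place finiteness enters is in the telescoping, where $m(Y) < \infty$ suffices to conclude $m(D_k) \to 0$.
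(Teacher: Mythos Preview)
The paper does not actually prove this lemma; it merely cites it from Aaronson's book (\cite{Aar97}, 1.5.5) as a standard fact, so there is no ``paper's proof'' to compare against. Your argument is the standard Kakutani tower proof and is correct.

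One small point of precision: your claim that ``conservativity together with $m(Y) > 0$ forces $\bigcup_{n \geq 0} T^{-n}Y$ to have full measure'' does use ergodicity as well, not just conservativity. Conservativity alone guarantees only that almost every point of $Y$ returns to $Y$; to see that almost every point of $X$ enters $Y$ you argue that $A := \bigcup_{n \geq 0} T^{-n}Y$ satisfies $T^{-1}A \subseteq A$, so $A \setminus T^{-1}A$ is wandering and hence null by conservativity, making $A$ invariant mod $m$, and then ergodicity plus $m(A) \geq m(Y) > 0$ gives $m(X \setminus A) = 0$. Your final paragraph hints at this but the earlier phrasing is slightly misleading. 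Otherwise the recursion $m(D_k) = m(Y_{k+1}) + m(D_{k+1})$, the telescoping (legitimate since $m(D_0) = m(Y) < \infty$), and the layer-cake summation are all fine, and the argument handles $m(X) = \infty$ without modification.
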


One can also obtain invariant measures via a functional analytic approach. Here we give a specific result for interval maps. Let $I$ be an interval. If $T: I \to I$ is piecewise strictly monotone and $C^1$,
then the {\em Perron-Frobenius operator} $\mathcal P_T$ is defined on the space of non-negative measurable functions $h$ on $I$ by
\begin{equation}\label{q:pfd}
\mathcal P_T h (x) = \sum_{y \in T^{-1}\{x\}} \frac{h(y)}{|DT(y)|}.
\end{equation}
A non-negative measurable function $\varphi$ on $I$ is a fixed point of $\mathcal P_T$ if and only if it provides an invariant measure $\mu$ for $T$ that is absolutely continuous with respect to $\lambda$ by setting $\mu(A) = \int_A \varphi \, d\lambda$ for each Borel set $A$.

\medskip
For a random map $F$ using a finite family of transformations $\{ T_j: I \to I\}_{j \in\Sigma}$, such that each map $T_j$ is piecewise strictly monotone and $C^1$, and a positive probability vector $\mathbf p = (p_j)_{j \in \Sigma}$, the Perron-Frobenius operator $\mathcal{P}_F$ is given on the space of non-negative measurable functions $h$ on $I$ by
\begin{align}\label{eqn3.22}
\mathcal{P}_Fh(x) = \sum_{j \in \Sigma} p_j \mathcal P_{T_j} h (x),
\end{align}
where each $\mathcal{P}_{T_j}$ is as given in \eqref{q:pfd}. Let $\mathbb P$ denote the $\mathbf p$-Bernoulli measure on $\Omega$. Then a non-negative measurable function $\varphi$ on $I$ is a fixed point of $\mathcal P_F $ if and only if the measure $\mathbb P \times \mu$, where $\mu$ is the absolutely continuous measure with $\frac{d\mu}{d\lambda} = \varphi$, is $F$-invariant.

In Subsection \ref{subsec:3.3} it will be shown that the density $\frac{d\mu_{\mathbf p}}{d\lambda}$ from Theorem \ref{MAIN}, which is a fixed point of the Perron-Frobenius operator for the random system $F$ given by \eqref{q:skewproduct}, is bounded away from zero. From this it is easy to see that \eqref{eqn3.22} implies that $\frac{d\mu_{\mathbf p}}{d\lambda}$ blows up to infinity at the points zero and one and also at least on one side of $c$.

\subsection{Estimates on good and bad maps}
Now let $T:I \to I$ be a $C^3$ map of an interval $I$ into itself. The {\em Schwarzian derivative} of $T$ at $x \in I$ with $DT(x) \neq 0$ is defined by
\begin{equation}
\mathbf ST(x) = \frac{D^3T(x)}{DT(x)} - \frac{3}{2} \Big(\frac{D^2 T(x)}{DT(x)}\Big)^2.
\end{equation}
We say that $T$ {\em has non-positive Schwarzian derivative} on $I$ if $DT(x) \neq 0$ and $\mathbf ST(x) \leq 0$ for all $x \in I$.  A direct computation shows that the Schwarzian derivative of the composition of two transformations $T_1,T_2: I \rightarrow I$ satisfies
\begin{equation} \label{eq4}
\mathbf S(T_2 \circ T_1)(x) = \mathbf S T_2\big(T_1(x)\big) \cdot |DT_1(x)|^2 + \mathbf ST_1(x).
\end{equation}
Hence, $\mathbf S(T_2 \circ T_1)\le 0$ provided $\mathbf S T_1\le 0$ and $\mathbf S T_2\le 0$.

\medskip
From \eqref{eq4} it follows that for a finite collection $\{ T_j: I \to I\}_{j \in \Sigma}$ of $C^3$ interval maps with non-positive Schwarzian derivative, we can write the Schwarzian derivative of $T_\omega^n$, $n \in \mathbb{N}$ and $\omega \in \Omega$, as
\begin{align}\label{eqn5}
\mathbf ST_{\omega}^n(x) = \sum_{i=0}^{n-1} \mathbf ST_{\omega_{i+1}}\big(T_{\omega}^i(x)\big) \cdot \Big|\prod_{j=1}^i DT_{\omega_j}(T_{\omega}^{j-1}(x))\Big|^2.
\end{align}
By (G2) and (B2) this implies that for a collection of good and bad maps $\{ T_j\}_{j \in \Sigma}$, $T_{\omega}^n$ has non-positive Schwarzian derivative on $[0,1]$ outside of the critical points of $T_\omega^n$ for all $\omega\in \Omega$ and $n \in \mathbb{N}$. 

\medskip
We will use the following two well-known properties of maps with non-positive Schwarzian derivative (see e.g.~\cite[Section 4.1]{dMvS93}).
\vskip .1cm
\noindent {\bf Minimum Principle:} Let $I = [a,b]$ be a closed interval and suppose that $T: I \rightarrow I$ has non-positive Schwarzian derivative. Then
\begin{equation}
|DT(x)| \geq \min\{DT(a),DT(b)\}, \quad \forall x \in [a,b].
\end{equation}

\medskip
A consequence of the Minimum Principle is that for any $T \in \mathfrak G \cup \mathfrak B$ the derivative $|DT|$ has locally no strict minima in the intervals $(0,c)$ and $(c,1)$. In particular, there cannot be any attracting fixed points for $T$ in $(0,c)$ and $(c,1)$. Therefore, if $T \in \mathfrak B$, then $T^n(x) \rightarrow c$ as $n \rightarrow \infty$ for all $x \in (0,1)$.

\medskip
\noindent {\bf Koebe Principle:} For each $\rho > 0$ there exist $K^{(\rho)} >1$ and $M^{(\rho)} > 0$ with the following property. Let $J \subseteq I$ be two intervals and suppose that $T: I \rightarrow I$ has non-positive Schwarzian derivative. If both components of $T(I)\backslash T(J)$ have length at least $\rho \cdot \lambda(T(J))$, then
\begin{align}\label{eq2.4}
\frac{1}{K^{(\rho)}} \leq \frac{DT(x)}{DT(y)} \leq K^{(\rho)}, \qquad \forall x,y \in J
\end{align}
and
\begin{align}\label{eq2.5}
\Big|\frac{DT(x)}{DT(y)}-1\Big| \leq M^{(\rho)} \cdot \frac{|T(x)-T(y)|}{\lambda(T(J))}, \qquad \forall x,y \in J.
\end{align}
Note that the constants $K^{(\rho)},M^{(\rho)}$ only depend on $\rho$ and not on the map $T$.

\medskip
From \eqref{eq2.4} one can obtain a bound on the size of the images of intervals: Let $J' \subseteq J$ be another interval. By the Mean Value Theorem there exists an $x \in J'$ with $|DT(x)| = \frac{\lambda (T(J'))}{\lambda(J')}$ and a $y \in J$ with $|DT(y)| = \frac{\lambda (T(J))}{\lambda(J)}$. Hence,
\begin{equation}\label{q:intervals}
\frac1{K^{(\rho)}} \frac{\lambda(J')}{\lambda(J)} \le \frac{DT(x)}{DT(y)} \frac{\lambda(J')}{\lambda(J)}  = \frac{\lambda(T(J'))}{\lambda(T(J))} \le K^{(\rho)} \frac{\lambda(J')}{\lambda(J)}.
\end{equation}

\medskip
Recall the constants $\ell_b$, $K_b$ and $M_b$ from condition (B3) and set $\ell_{\min} = \min \{ \ell_b \, : \, b \in \Sigma_B\}$ and $\ell_{\max} = \max \{ \ell_b \, : \, b \in \Sigma_B\}$. (B3) gives us control over the distance between $T_\omega^n (x)$ and $c$. 

\begin{lemma}\label{lemma3.6}
For all $n \in \mathbb{N}$, $\omega \in \Sigma_B^{\mathbb{N}}$ and $x \in [0,1]$,
\begin{align}
\left( \tilde K |x-c| \right)^{\ell_{\omega_1} \cdots \ell_{\omega_n}}  \le  |T_{\omega}^n(x)-c| \leq \left( \tilde M |x-c|\right)^{\ell_{\omega_1} \cdots \ell_{\omega_n}},
\end{align}
with constants $\tilde K =\big( \frac{\min \{ K_b \, : \, b \in \Sigma_B \}}{\ell_{\max}}\big)^ \frac1{\ell_{\min}-1} \in (0,1)$ and $\tilde M = \big(\frac{\max\{ M_b \, : \, b \in \Sigma_B \}}{\ell_{\min}}\big)^{\frac1{\ell_{\min}-1}} > 1$.
\end{lemma}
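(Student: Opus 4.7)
The strategy is induction on $n$, but with a slightly strengthened hypothesis. Set $L_n = \ell_{\omega_1}\cdots\ell_{\omega_n}$. I would prove by induction that
\[
\tilde K^{L_n-1}|x-c|^{L_n} \;\le\; |T_\omega^n(x)-c| \;\le\; \tilde M^{L_n-1}|x-c|^{L_n},
\]
which since $\tilde K<1<\tilde M$ implies the stated bounds. The reason for the $-1$ in the exponent is that it is precisely the contribution of a single application of a bad map, so it propagates through composition without accumulating error.

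For the base case $n=1$, I would use $T_b(c)=c$ from (B1) together with the monotonicity of $T_b$ on each side of $c$ to write $|T_b(x)-c| = \int_{\min(x,c)}^{\max(x,c)} |DT_b(t)|\,dt$. Substituting the pointwise derivative bounds from (B3) and integrating yields
\[
\frac{K_b}{\ell_b}|x-c|^{\ell_b} \;\le\; |T_b(x)-c| \;\le\; \frac{M_b}{\ell_b}|x-c|^{\ell_b}.
\]
To finish the base case I need $\tilde K^{\ell_b-1}\le K_b/\ell_b$ and $M_b/\ell_b\le \tilde M^{\ell_b-1}$. Both follow from the defining identities $\tilde K^{\ell_{\min}-1}=K_{\min}/\ell_{\max}$ and $\tilde M^{\ell_{\min}-1}=M_{\max}/\ell_{\min}$ together with $\ell_b\ge\ell_{\min}$: since $\tilde K<1$, $\tilde K^{\ell_b-1}\le \tilde K^{\ell_{\min}-1}=K_{\min}/\ell_{\max}\le K_b/\ell_b$, and symmetrically for the upper bound using $\tilde M>1$.

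For the inductive step, I apply the single-step bound to $y=T_\omega^n(x)$ and plug in the inductive hypothesis:
\[
|T_\omega^{n+1}(x)-c| \;\le\; \tilde M^{\ell_{\omega_{n+1}}-1}\bigl(\tilde M^{L_n-1}|x-c|^{L_n}\bigr)^{\ell_{\omega_{n+1}}}.
\]
The exponent of $\tilde M$ simplifies as $(\ell_{\omega_{n+1}}-1)+(L_n-1)\ell_{\omega_{n+1}}=L_n\ell_{\omega_{n+1}}-1=L_{n+1}-1$, which is exactly what is needed. The exponent of $|x-c|$ trivially becomes $L_n\ell_{\omega_{n+1}}=L_{n+1}$. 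The lower bound is completely analogous, using $\tilde K<1$ to reverse the relevant monotonicities.

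The main obstacle, and really the only non-routine point, is recognising that the naive induction hypothesis $|T_\omega^n(x)-c|\le(\tilde M|x-c|)^{L_n}$ does \emph{not} close: composing produces a spurious factor $\tilde M^{\ell_{\omega_{n+1}}}>1$ for which there is no slack. The $-1$ correction in the exponent is forced by the arithmetic $(\ell_{\omega_{n+1}}-1)+(L_n-1)\ell_{\omega_{n+1}}=L_{n+1}-1$, and once this is spotted the peculiar choice $\tilde M=(M_{\max}/\ell_{\min})^{1/(\ell_{\min}-1)}$ (and the analogous $\tilde K$) in the statement is seen to be exactly the quantity needed to make the base case fit this telescoping form.
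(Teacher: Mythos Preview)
Your proof is correct and follows essentially the same inductive strategy as the paper. The only difference is bookkeeping: the paper iterates the single-step estimate with the raw constant $K_{\min}/\ell_{\max}$, obtaining the explicit exponent $1+\sum_{i=0}^{n-2}\ell_{\omega_n}\cdots\ell_{\omega_{n-i}}$, and only at the end bounds this exponent by $L_n/(\ell_{\min}-1)$ to absorb it into $\tilde K^{L_n}$. Your strengthened hypothesis with exponent $L_n-1$ on $\tilde K$ (resp.\ $\tilde M$) packages this same estimate into a telescoping identity, which makes the inductive step a one-line arithmetic check and avoids the final sum bound; the price is the extra verification in the base case that $\tilde K^{\ell_b-1}\le K_b/\ell_b$ and $M_b/\ell_b\le\tilde M^{\ell_b-1}$, which you carry out correctly.
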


\begin{proof}
It follows from (B3) that for any $j \in \Sigma_B$ and $x \in [0,1]$,
\[ |T_j(x)-c| = |T_j(x)-T_j(c)| = \Big| \int_c^x DT_j(y) dy \Big| \ge \frac{\min \{ K_b \, : \, b \in \Sigma_B \}}{\ell_{\max}} |x-c|^{\ell_j}.\]
By induction we get that for each $n \in \mathbb{N}$ and $\omega \in \Sigma_B^{\mathbb{N}}$,
\begin{align}\label{eq3.29}
|T_{\omega}^n(x)-c| \ge \left(  \frac{\min \{ K_b \, : \, b \in \Sigma_B \}}{\ell_{\max}} \right)^{1+\sum_{i=0}^{n-2} \ell_{\omega_n} \cdots \ell_{\omega_{n-i}}} \cdot |x-c|^{\ell_{\omega_1} \cdots \ell_{\omega_n}}.
\end{align}
From (B3) we see that $\frac{\min \{ K_b \, : \, b \in \Sigma_B \}}{\ell_{\max}}<1$. The lower bound now follows by observing that
\[ \Big(1+\sum_{i=0}^{n-2} \ell_{\omega_n} \cdots \ell_{\omega_{n-i}}\Big)/(\ell_{\omega_1}\cdots \ell_{\omega_n}) \le \sum_{i=1}^n \frac1{\ell_{\min}^i} < \frac1{\ell_{\min}-1}.
\]
The result for the upper bound follows similarly, by noticing that in this case from (B3) it follows that $ \frac{\max\{ M_b \, : \, b \in \Sigma_B\}}{\ell_{\min}}>1$.
\end{proof}

It follows that under iterations of bad maps the distance $|T^n_\omega(x)-c|$ is eventually decreasing superexponentially fast in $n$.

Furthermore, note that there exists a $\delta > 0$ such that $|DT_b(x)| < 1$ for all $x \in [c-\delta,c+\delta]$ and $b \in \Sigma_B$. This implies
\begin{align}\label{q:distancetoc}
|T_b(x)-c| < |x-c|
\end{align}
for all $x \in [c-\delta,c+\delta]$ and $b \in \Sigma_B$.

\medskip
The upper bound on $|T^n_\omega(x)-c|$ that we obtained in Lemma \ref{lemma3.6} will be used in Section \ref{s:in-finite} to prove that $\mu_{\mathbf p}$ in Theorem \ref{MAIN2} is infinite if $\theta \geq 1$. The lower bound from Lemma \ref{lemma3.6} will be used for the proof that $\mu_{\mathbf p}$ is finite if $\theta < 1$.

\section{Existence of a $\sigma$-finite acs measure}\label{sec:PT2.1}
From now on we fix an integer $N \ge 2$ and consider a finite collection $T_1,\ldots,T_N \in \mathfrak G \cup \mathfrak B$ of good and bad maps in the classes $\mathfrak G$ and $\mathfrak B$. As in the Introduction write $\Sigma_G = \{1 \leq j \leq N: T_j \in \mathfrak G\}$ and $\Sigma_B = \{1 \leq j \leq N: T_j \in \mathfrak B\}$ for the corresponding index sets and assume that $\Sigma_G,\Sigma_B \neq \emptyset$. Write $\Sigma = \{ 1, 2, \ldots, N \}$ and set $\Omega = \Sigma^\mathbb N$ for the set of infinite sequences of elements in $\Sigma$. In this section we prove Theorem~\ref{MAIN}, i.e., we establish the existence of an ergodic acs measure and several of its properties  using an inducing scheme for the random system $F$. We fix the index $g \in \Sigma_G$ of one good map $T_g$ and start by constructing an inducing domain that depends on this $g$.

\subsection{The induced system and return time partition}

The first lemma is needed to specify the set on which we induce. For each $k \in \mathbb{N}$ let $x_k$ and $x_k'$ in $(0,c)$ denote the critical points of $T_g^k$ closest to $0$ and $c$, respectively. Furthermore, let $y_k$ and $y_k'$ in $(c,1)$ denote the critical points of $T_g^k$ closest to $1$ and $c$, respectively.

\begin{lemma}\label{l:criticalpts}
We have $x_k \downarrow 0$, $x_k' \uparrow c$, $y_k' \downarrow c$, $y_k \uparrow 1$ as $k \rightarrow \infty$.
\end{lemma}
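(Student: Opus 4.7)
The monotonicity of the four sequences is immediate from the observation that the critical set of $T_g^k$ in $(0,1)$ equals $C_k = \bigcup_{j=0}^{k-1} T_g^{-j}\{c\}$, which is nondecreasing in $k$: this forces $x_{k+1}\le x_k$, $x_{k+1}'\ge x_k'$, $y_{k+1}'\le y_k'$ and $y_{k+1}\ge y_k$, so each sequence is monotone and bounded, hence convergent. What needs proof is that the four limits equal $0$, $c$, $c$, $1$ respectively; I will sketch the argument for $x_k \downarrow 0$, as the other three are entirely analogous.

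The plan is to exhibit a sequence of preimages of $c$ inside $(0,c)$ converging to $0$. By (G1) the forward $T_g$-orbit of $0$ lies in $\{0,1\}$ and is therefore periodic of some minimal period $p \in \{1,2\}$; by (G4) and the chain rule, $|DT_g^p(0)| = \prod_{j=0}^{p-1}|DT_g(T_g^j(0))| > 1$, so $0$ is a hyperbolic repelling fixed point of $T_g^p$. Consequently $T_g^p$ admits a contracting local inverse branch $\Phi$ with $\Phi(0)=0$, defined on some right-neighborhood $(0,\eta_0)$ of $0$, mapping this neighborhood into itself, and satisfying $\Phi^n(y)\to 0$ for every $y\in(0,\eta_0)$. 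Given any preimage $y_0\in(0,\eta_0)$ of $c$ under some iterate $T_g^m$, the points $y_n := \Phi^n(y_0)$ are preimages of $c$ under $T_g^{m+np}$ lying in $(0,c)$, and hence are critical points of $T_g^{m+np+1}$; this yields $x_{m+np+1}\le y_n\to 0$.

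The existence of such a $y_0$ comes from forward expansion at $0$. The iterate $T_g^p$ inherits non-positive Schwarzian derivative from (G2), so the minimum principle combined with $|DT_g^p(0)| > 1$ rules out any additional fixed point of $T_g^p$ inside $(0,a)$, where $a$ denotes the leftmost critical point of $T_g^p$ in $(0,c)$; hence the forward orbit $T_g^{pn}(\eta_0)$ strictly increases until it escapes $(0,a)$. As soon as it does, one more application of $T_g^p$ spreads the image across $(0,1)$: indeed, $T_g^p|_{(0,a)}$ is a diffeomorphism onto $(0,1)$ by the Markov-type surjectivity $T_g((0,c)) = T_g((c,1)) = (0,1)$ built into (G1), so the image necessarily covers $c$ and produces the required $y_0\in(0,\eta_0)$.

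The remaining limits $y_k\uparrow 1$, $x_k'\uparrow c$ and $y_k'\downarrow c$ follow by parallel arguments. For $y_k\to 1$ one replaces the fixed orbit point $0$ by $1$ and uses the contracting inverse branch of $T_g^{p'}$ there. For $x_k'\to c$ and $y_k'\to c$ one composes additionally with the inverse branch of $T_g$ that sends the appropriate boundary point in $\{0,1\}$ back to $c$ (which exists because $T_g(\{0,c,1\})\subseteq\{0,1\}$), thereby mapping preimages accumulating at $0$ or $1$ into preimages accumulating at $c$. The main obstacle in carrying this out rigorously is the case analysis arising from the four possible orientations of $T_L$ and $T_R$ (depending on whether $T_g(0)=0$ or $1$, and analogously at $1$); but in every case, hyperbolic repulsion at $\{0,1\}$ from (G4) together with branch surjectivity from (G1) uniformly drives the argument.
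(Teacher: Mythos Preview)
Your proposal is essentially correct and follows the same strategy as the paper's proof: both exploit that $\{0,1\}$ is repelling, combine (G4) with the Minimum Principle/non-positive Schwarzian, and produce preimages of $c$ accumulating at the endpoints by iterating an inverse branch of a power of $T_g$. The paper works concretely with $T_g^2$ and its leftmost/rightmost inverse branches $L,R$, deriving the explicit bound $T_g^2(x)\ge\min\{x/a,\,DT_g^2(0)\,x\}$ on $[0,a]$, whereas you phrase the same mechanism via hyperbolic repulsion and a local contracting inverse $\Phi$; these are equivalent formulations.

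One small imprecision worth flagging: you assert that $0$ is periodic for $T_g$ with period $p\in\{1,2\}$, but (G1) allows $T_g(0)=1$ together with $T_g(1)=1$, in which case $0$ is only pre-periodic and your contracting-inverse argument at $0$ does not apply directly. The paper sidesteps this by always passing to $T_g^2$, noting that at least one of the extreme branches $T_g^2|_{(0,a)}$, $T_g^2|_{(b,1)}$ is increasing (hence fixes its endpoint), and then routing through whichever side works, pulling back once via $L$ or $R$ to reach the other endpoint. Your final paragraph acknowledging the orientation case analysis essentially covers this, so the imprecision is cosmetic rather than a genuine gap.
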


\begin{proof}
Let $a$ and $b$ denote the critical points of $T^2_g$ in $(0,c)$ and $(c,1)$, respectively. Then at least one of the branches $T_g^2|_{(0,a)}$ and $T_g^2|_{(b,1)}$ is increasing. Suppose that $T_g^2|_{(0,a)}$ is increasing. It then follows from the Minimum Principle that $T_g^2(x) \geq \min\{\frac{x}{a},DT_g^2(0)\cdot x\}$ for each $x \in [0,a]$. To see this, suppose there is an $x \in (0,a)$ with $T_g^2(x) < \min\{\frac{x}{a},DT_g^2(0)\cdot x\}$. Then there must be a $y \in (0, x)$ with $DT_g^2(y) < \min\{ D T_g^2(0), \frac1a \}$ and a $z \in [x,a)$ with $DT_g^2(z) > \frac1a$. On the other hand, by the Minimum Principle, $DT_g^2(y) \ge \min\{ DT_g^2(0), DT_g^2(z) \}$, a contradiction. Combining this with $DT_g^2(0) > 1$ and defining $L: (0,1) \rightarrow (0,a)$ by $L = (T_g^2|_{(0,a)})^{-1}$, we see that $L^k(a) \downarrow 0$ as $k \rightarrow \infty$. Furthermore, define $R: (0,1) \rightarrow (b,1)$ by $R = (T_g^2|_{(b,1)})^{-1}$. If $T_g^2|_{(b,1)}$ is increasing, we see that similarly $R^k(b) \uparrow 1$ as $k \rightarrow \infty$. On the other hand, if $T_g^2|_{(b,1)}$ is decreasing, we have $RL^k(a) \uparrow 1$ as $k \rightarrow \infty$. Finally, if $T_g^2|_{(0,a)}$ is decreasing, then $T_g^2|_{(b,1)}$ must be increasing, which yields $LR^k(b) \downarrow 0$ as $k \rightarrow \infty$. We conclude that $x_k \downarrow 0$ and $y_k \uparrow 1$ as $k \rightarrow \infty$. It follows from (G1) that $c$ is a limit point of both of the sets $\bigcup_{k \in \mathbb{N}} (T_g|_{(0,c)})^{-1}(\{x_k,y_k\})$ and $\bigcup_{k \in \mathbb{N}} (T_g|_{(c,1)})^{-1}(\{x_k,y_k\})$. So $x_k' \uparrow c$, $y_k' \downarrow c$ as $k \rightarrow \infty$. 
\end{proof}

By the previous lemma and (G1), for $k \in \mathbb{N}$ large enough it holds that
\begin{equation}\label{eq:3.1}
\begin{split}
T_g(x_k') \leq \ &  x_k' \text{ or } T_g(x_k') \geq y_k', \text{ and}\\
T_g(y_k') \leq \ & x_k' \text{ or } T_g(y_k') \geq y_k',
\end{split}\end{equation}
and, using also (G4), (B1) and (B4), for every $j \in \Sigma$,
\begin{equation}\label{eq:3.3}
\begin{split}
T_j & \big([0,x_k] \cup [y_k,1]\big) \subseteq  [0,x_k') \cup (y_k',1] \text{ and }\\
 |D & T_j(x)|> d > 1 \, \,  \text{ for all } x \in [0,x_k) \cup (y_k,1] \, \text{ and some constant }d.
\end{split}
\end{equation}
Fix a $\kappa \in \mathbb N$ for which \eqref{eq:3.1} and \eqref{eq:3.3} hold. We introduce some notation. Let $t \in \Sigma$ be such that $t \neq g$, and define
\begin{align}
&C = [\underbrace{g\cdots g}_{\kappa \ \text{times}} t] = [g^\kappa t],\\
&J_0 = (x_\kappa,x_\kappa'), \quad J_1 = (y_\kappa',y_\kappa), \quad J = J_0 \cup J_1,\\
&Y = C \times J.
\end{align}
The next lemma shows that $\mathbb P \times \lambda$-almost all $(\omega,x)$ eventually enter $Y$ under iterations of $F$, 
and hence that $\mathbb P \times \lambda$-almost all $(\omega,x) \in Y$ will return to $Y$ infinitely many times. 
\begin{lemma}\label{lemma3.2}

\begin{align}
\mathbb P \times \lambda \Big( \Omega \times [0,1] \setminus \bigcup_{n = 1}^{\infty} F^{-n} Y \Big)=0.
\end{align}
\end{lemma}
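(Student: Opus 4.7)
The plan is to split the proof into two parts: (i) show that for $\mathbb P\times\lambda$-almost every $(\omega,x)$ the orbit $T^n_\omega(x)$ enters $J$ at some time $n\ge 1$, and (ii) upgrade this to the existence of an $n$ with $F^n(\omega,x)\in Y$, i.e.\ also $\sigma^n\omega\in C$.

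For (i), I would begin by further enlarging $\kappa$; since \eqref{eq:3.1} and \eqref{eq:3.3} hold for all sufficiently large $k$, this costs nothing, and I arrange in addition the two properties (A) $T_b([x_\kappa',y_\kappa'])\subseteq[x_\kappa',y_\kappa']$ for every $b\in\Sigma_B$, which follows from \eqref{q:distancetoc} once $[x_\kappa',y_\kappa']\subseteq[c-\delta,c+\delta]$ together with $\tilde M\cdot\max(c-x_\kappa',y_\kappa'-c)<1$, and (B) for every $g'\in\Sigma_G$ there is a neighbourhood $U_{g'}\subseteq[x_\kappa',y_\kappa']$ of $c$ with $T_{g'}(U_{g'})\subseteq[0,x_\kappa]\cup[y_\kappa,1]$, which follows from continuity together with $T_{g'}(c)\in\{0,1\}$. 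Suppose then that $(\omega,x)$ produces an orbit that never enters $J$; the orbit lies in $[0,1]\setminus J=[0,x_\kappa]\cup[x_\kappa',y_\kappa']\cup[y_\kappa,1]$ at every time. By \eqref{eq:3.3} together with the Minimum Principle, $|DT_j|\ge d>1$ on $[0,x_\kappa]\cup[y_\kappa,1]$ and the image of this set under any $T_j$ is contained in $[0,x_\kappa]\cup J\cup[y_\kappa,1]$; hence once the orbit touches $[0,x_\kappa]\cup[y_\kappa,1]$, its distance from $\{0,1\}$ grows by factor $\ge d$ per step and it must cross into $J$ within $O(\log(1/\min(x_\kappa,1-y_\kappa)))$ steps, contradicting the assumption. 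So the orbit must be eventually confined to $[x_\kappa',y_\kappa']$. But $\mathbb P$-a.s.\ the sequence $\omega$ contains infinitely often an all-bad block of arbitrary length $N$ immediately followed by a good symbol $g'$ (Borel--Cantelli for i.i.d.\ sequences). By (A) the orbit stays in $[x_\kappa',y_\kappa']$ across the bad block, and by Lemma~\ref{lemma3.6} it lies in $U_{g'}$ once $N$ is large enough; the subsequent good step then maps it into $[0,x_\kappa]\cup[y_\kappa,1]$ by (B), reducing to the previous case and yielding a contradiction. This proves (i).

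For (ii), I would iterate (i) via the Markov structure and non-singularity of $F$ to obtain, almost surely, infinitely many visit times $\tau_1<\tau_2<\cdots$ of $T^n_\omega(x)$ to $J$, and then extract a subsequence $\tau_{k_i}$ with $\tau_{k_{i+1}}>\tau_{k_i}+\kappa+1$. The events $E_i=\{\sigma^{\tau_{k_i}}\omega\in C\}$ depend on the disjoint blocks $\omega_{\tau_{k_i}+1},\ldots,\omega_{\tau_{k_i}+\kappa+1}$, all lying strictly beyond $\tau_{k_i}$; the strong Markov property for i.i.d.\ sequences therefore gives $\mathbb P(E_i\mid \omega_1,\ldots,\omega_{\tau_{k_i}})=p_g^\kappa p_t>0$ for every $i$. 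A tower-property induction yields $\mathbb P(E_1^c\cap\cdots\cap E_n^c)=(1-p_g^\kappa p_t)^n\to 0$, so some $E_i$ occurs almost surely. At that index one has $F^{\tau_{k_i}}(\omega,x)\in Y$, completing the proof.

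The main obstacle is the trapping step in (i): ruling out orbits that remain forever in $[x_\kappa',y_\kappa']$ requires a careful combination of the superattracting estimate of Lemma~\ref{lemma3.6}, the enlarged-$\kappa$ properties (A)--(B), and the Borel--Cantelli statement that almost every $\omega$ contains arbitrarily long bad blocks followed by a good symbol. Everything else—the uniform expansion away from $\{0,1\}$, upgrading ``visits $J$ once'' to ``infinitely often'' using non-singularity and the Markov property, and the final Borel--Cantelli in (ii)—is routine.
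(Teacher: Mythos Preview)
Your argument is correct, but both halves take a longer route than the paper's.

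For part (i), the paper exploits the fact that $J$ was built from the critical points of $T_g^\kappa$ for the \emph{specific} fixed good index $g$. Since $(x_\kappa',c)$ and $(c,y_\kappa')$ contain no critical point of $T_g^\kappa$, one has $c\notin T_g((x_\kappa',c))$ and $c\notin T_g((c,y_\kappa'))$; combined with \eqref{eq:3.1} this gives $T_g([x_\kappa',c)\cup(c,y_\kappa'])\subseteq(0,x_\kappa']\cup[y_\kappa',1)$ directly. Thus at any time $n$ with $\omega_{n+1}=g$ (and there are infinitely many such $n$ a.s.), a single step already pushes the orbit out of the trap region, without any need for Lemma~\ref{lemma3.6}, long bad blocks, or the extra properties (A), (B). This also gives infinitely many visits to $J$ in one stroke, so the non-singularity iteration you invoke is unnecessary. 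Your property (A) as written is actually slightly off---$[x_\kappa',y_\kappa']$ need not be symmetric about $c$, so $|T_b(x)-c|<|x-c|$ does not literally give $T_b([x_\kappa',y_\kappa'])\subseteq[x_\kappa',y_\kappa']$---but this does no harm, since your contradiction hypothesis already confines the orbit to that interval at every time, making (A) redundant.

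For part (ii), the paper avoids stopping times entirely: given the sequence $(n_j)$ of visit times for a fixed $(\omega,x)$, it sets $\mathcal E=\Omega\setminus\bigcup_j\sigma^{-n_j}C$ and observes via the Lebesgue Differentiation Theorem on $\Omega$ that any Lebesgue point $\omega$ of $1_{\mathcal E}$ would satisfy $1_{\mathcal E}(\omega)+\mathbb P(C)\le 1$, forcing $\omega\notin\mathcal E$. Your strong-Markov/tower argument is equally valid and arguably more transparent probabilistically; the paper's device is shorter and sidesteps checking the stopping-time measurability details.
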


\begin{proof} For $\mathbb{P}$-almost all $\omega \in \Omega$ we have $\sigma^n \omega \in [g]$ for infinitely many $n \in \mathbb{N}$. For any such $n$ and each $x \in (0,c) \cup (c,1)$ either $T_\omega^n (x) \in J$ or $T_\omega^n(x) \not \in J$. If $T_\omega^n(x) \in (0, x_\kappa] \cup [y_\kappa,1)$, then it follows from \eqref{eq:3.3} that there is an $m \ge 1$ such that $T_\omega^{n+m}(x) \in J$. If $T_\omega^n (x) \in [x_\kappa',c) \cup (c, y_\kappa']$ it follows from \eqref{eq:3.1} that $T^{n+1}_\omega (x)=T_g \circ T_\omega^n (x) \in (0,x_\kappa'] \cup [y_\kappa',1)$, which means that we are in the first case if $T_{\omega}^{n+1}(x) \notin J$. Hence, for $\mathbb{P} \times \lambda$-almost all $(\omega,x) \in \Omega \times [0,1]$ we have $T_{\omega}^n(x) \in J$ for infinitely many $n \in \mathbb{N}$. Consider such an $(\omega,x)$, and let $(n_j)_{j \in \mathbb{N}}$ be an increasing sequence in $\mathbb{N}$ that satisfies $T_{\omega}^{n_j}(x) \in J$ for each $j \in \mathbb{N}$. Recall that $\sigma$ denotes the left shift on sequences and define $\mathcal{E} = \Sigma \backslash (\bigcup_{j \in \mathbb{N}} \sigma^{-n_j} C)$, i.e., $\mathcal E$ contains precisely those sequences $\omega'$ that satisfy $\sigma^{n_j}(\omega') \not \in C$ for all $n_j$. According to the Lebesgue Differentiation Theorem (see e.g.~\cite{Tol03}) we may assume that $\omega$ is a Lebesgue point of $1_{\mathcal{E}}$, which yields
\begin{align*}
1 \ge \frac{\mathbb{P}\big((\mathcal{E} \cup \sigma^{-n_j}C) \cap [\omega_1 \cdots \omega_{n_j}]\big)}{\mathbb{P}\big([\omega_1 \cdots \omega_{n_j}]\big)} &= \frac{\mathbb{P}\big(\mathcal{E} \cap [\omega_1 \cdots \omega_{n_j}]\big)}{\mathbb{P}\big([\omega_1 \cdots \omega_{n_j}]\big)} + \frac{\mathbb{P}\big(\sigma^{-n_j}C \cap [\omega_1 \cdots \omega_{n_j}]\big)}{\mathbb{P}\big([\omega_1 \cdots \omega_{n_j}]\big)}\\
& \rightarrow 1_{\mathcal{E}}(\omega) + \mathbb{P}(C), \qquad \text{ as $j \rightarrow \infty$}.
\end{align*}
Since $\mathbb{P}(C) > 0$,  we conclude that $\omega \notin \mathcal{E}$. Hence, there is an $n_j$ so that $F^{n_j}(\omega,x) \in C \times J=Y$.
\end{proof}

By Lemma~\ref{lemma3.2} the first return time map $\varphi_Y$, see \eqref{q:frtm}, and the induced transformation $F_Y$ are well defined on the full measure subset of points in $Y$ that return to $Y$ infinitely often under iterations of $F$, which we call $Y$ again.
The set of points in $Y$ that return to $Y$ after $n$ iterations of $F$ can be described as
\begin{equation}\label{q:leveln}
Y \cap F^{-n}(Y) = \bigcup_{\omega \in C \cap \sigma^{-n}C} [\omega_1 \cdots \omega_n] \times (T_{\omega}^n|_J)^{-1}(J) \quad \bmod \mathbb{P} \times \lambda,
\end{equation}
which is empty for $n \le \kappa$. Note that in \eqref{q:leveln} in fact $[\omega_1 \cdots \omega_n] = [g^\kappa t \omega_{k+2}\cdots \omega_n g^\kappa t]$ and that by construction each map $T_{\omega}^n|_J$ in \eqref{q:leveln} consists of branches that all have range $(0,c)$ or $(c,1)$ or $(0,1)$, since any branch of $T_{\omega}^\kappa|_J$ maps onto $(0,1)$. Therefore, $Y \cap F^{-n}(Y)$ can be written as a finite union of products $A = [\mathbf u g^{\kappa} t] \times I$ of cylinders $[\mathbf u g^{\kappa} t] \subseteq C$ with $|\mathbf u|=n$ and open intervals $I \subseteq J$, each of which is mapped under $F^n$ onto $C \times J_0$ or $C \times J_1$. Call the collection of these sets $P_n$ and let $\alpha = \bigcup_{n > \kappa} P_n$. Let $\mathbb{P}_C$ and $\lambda_J$ denote the normalized restrictions of $\mathbb{P}$ to $C$ and $\lambda$ to $J$ respectively.

\begin{lemma}\label{l:disjointp}
\leavevmode
\begin{itemize}
\item[(1)] The collection $\alpha$ forms a countable {\em return time partition} of $Y$, i.e., the measure $\mathbb P_C \times \lambda_J (\bigcup_{A \in \alpha} A) = 1$, any two different sets $A, A' \in \alpha$ are disjoint and on any $A \in \alpha$ the first return time map $\varphi_Y$ is constant.
\item[(2)] Let $\pi$ denote the canonical projection onto the second coordinate. Any $x \in J$ is contained in a set $\pi(A)$ for some set $A \in \alpha$.
\end{itemize}
\end{lemma}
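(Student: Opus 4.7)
I would prove part (1) by interpreting each $P_n$ as the collection of product sets $A=[\mathbf u g^\kappa t]\times I$ on which $\varphi_Y\equiv n$ (so the word $\mathbf u$, which starts with $g^\kappa t$, contains no intermediate occurrence of the pattern $g^\kappa t$) and $I$ is a maximal open branch of monotonicity of $T_\omega^n|_J$ mapping onto $J_0$ or $J_1$. The full-measure statement then follows from Lemma~\ref{lemma3.2}: that lemma yields $(\mathbb{P}\times\lambda)$-a.e.\ $(\omega,x)$ eventually enters $Y$, and since each $T_j$ is non-singular, an inductive argument discarding one null set at each of countably many successive entries into $Y$ shows that $(\mathbb{P}_C\times\lambda_J)$-a.e.\ $(\omega,x)\in Y$ returns to $Y$ infinitely often, hence $\varphi_Y<\infty$ a.e.\ on $Y$, so $\bigcup_{A\in\alpha}A=\{\varphi_Y<\infty\}$ modulo a null set. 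Pairwise disjointness across different levels is immediate from $\varphi_Y$ being single-valued; within a single level distinct cylinders are disjoint in $\Omega$ and, for a fixed cylinder, the finitely many branches $I$ are disjoint open subintervals of $J$ separated by the critical points of $T_\omega^n$ in $J$ (of which there are only finitely many since $T_\omega^n$ is piecewise smooth). Constancy of $\varphi_Y$ on each $A$ is then built into the construction.

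For part (2) I fix $x\in J$ outside the finite exceptional set $E:=\{x\in J:T_g^\kappa(x)\in\{0,c,1\}\}$ (for $x\in E$ the forward orbit under any $\omega\in C$ is trapped in $\{0,c,1\}$ after step $\kappa+1$ by (G1) and (B1), so these points are genuinely not in $\pi(\alpha)$; $E$ is finite and hence $\lambda$-null, and the conclusion of (2) should be read modulo $E$ in keeping with the earlier convention of redefining $Y$ up to null sets). Set $\omega_1\cdots\omega_{\kappa+1}=g^\kappa t$ to place $\omega\in C$, and put $z:=T_\omega^{\kappa+1}(x)\in(0,1)\setminus\{c\}$. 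I then replicate the case analysis from the proof of Lemma~\ref{lemma3.2} deterministically: if $z\in J$, I am done; if $z\in(x_\kappa',c)\cup(c,y_\kappa')$, a single application of $T_g$ pushes $z$ out of this $c$-neighbourhood by \eqref{eq:3.1}; and if $z\in[0,x_\kappa]\cup[y_\kappa,1]$, then \eqref{eq:3.3} together with the derivative bound $|DT_j|>d>1$ forces iterations of any fixed $T_j$ to exit into $(x_\kappa,x_\kappa')\cup(y_\kappa',y_\kappa)=J$ after finitely many steps. This prescribes letters $\omega_{\kappa+2},\ldots,\omega_m$ with $T_\omega^m(x)\in J$ for some $m>\kappa$; further prescribing $\omega_{m+1}\cdots\omega_{m+\kappa+1}=g^\kappa t$ gives $\sigma^m\omega\in C$, so $(\omega,x)\in Y\cap F^{-m}(Y)$ and hence $(\omega,x)\in A$ for some $A\in\alpha$ whose projection contains $x$.

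The main obstacle is the treatment of the finite exceptional set $E$ in (2), which must be interpreted modulo a $\lambda$-null subset of $J$ rather than pointwise on all of $J$. A secondary bookkeeping point is that the definition of $P_n$ must be read as the \emph{first-return} decomposition of $Y\cap F^{-n}(Y)$ (i.e.\ only cylinders $[\mathbf u g^\kappa t]$ with no intermediate occurrence of the pattern $g^\kappa t$ in $\mathbf u$) in order for the union $\alpha=\bigcup_{n>\kappa}P_n$ to be a genuine partition rather than merely a covering; once this convention is in place, the remaining verifications reduce to elementary cylinder- and branch-bookkeeping.
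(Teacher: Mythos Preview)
Your route through part~(1) differs from the paper's: the paper keeps $P_n$ as the full product decomposition of $Y\cap F^{-n}(Y)$ and argues disjointness geometrically, by observing that if $A=[\mathbf u g^\kappa t]\times I\in P_n$ and $A'=[\mathbf v g^\kappa t]\times I'\in P_m$ ($m<n$) intersect then $\mathbf v g^\kappa t$ is a prefix of $\mathbf u$, the boundary $\partial I'$ is mapped by $T_{\mathbf v g^\kappa t}$ into $\{0,1\}$ and stays there, and this is incompatible with $T_{\mathbf u}(I)=J_i$ once $I$ meets $\partial I'$. You instead redefine $P_n$ as the first-return level set, which would make disjointness automatic.

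There is a genuine gap in your parenthetical identification of ``$\varphi_Y\equiv n$ on $[\mathbf u g^\kappa t]\times I$'' with ``$\mathbf u$ contains no intermediate occurrence of $g^\kappa t$''. Only the backward implication is valid. A word $\mathbf u$ may well contain $g^\kappa t$ at an intermediate position $m$ while the particular interval $I$ satisfies $T_{u_1\cdots u_m}(I)\cap J=\emptyset$ (for instance $T_{u_1\cdots u_m}(I)\subseteq[0,x_\kappa]$, after which $T_{g^\kappa t\mathbf w}$ can still carry it onto some $J_i$); then there is no return at time $m$ and $\varphi_Y\equiv n$ on $A$ despite the intermediate pattern. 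The first-return condition therefore depends on the pair $(\mathbf u,I)$, not on $\mathbf u$ alone, and restricting $\alpha$ to words without the intermediate pattern throws away positive-measure pieces of $\{\varphi_Y<\infty\}$, so your full-measure claim would fail. What you actually need---that $\varphi_Y$ is constant on every product piece $[\mathbf u g^\kappa t]\times I$ of $Y\cap F^{-n}(Y)$---is exactly what the paper's boundary argument delivers: it forces $I\subseteq I'$ whenever $I\cap I'\neq\emptyset$, so that either every point of $I$ returns at time $m$ or none does. Your handling of part~(2), including the identification of the finite exceptional set $E$, is sound and in the same spirit as the paper's argument.
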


\begin{proof} 
The fact that $\mathbb P_C \times \lambda_J (\bigcup_{A \in \alpha} A) = 1$ follows from Lemma~\ref{lemma3.2} and it is clear from the construction that the first return time map $\varphi_Y$ is constant on any element $A \in \alpha$. To show that any two elements are disjoint, note that for $A,A' \in P_n$ this is clear. Suppose there are $1 \le m <n$, $A = [\mathbf u g^{\kappa} t] \times I \in P_n$ and $A' = [\mathbf v g^{\kappa} t] \times I' \in P_m$ such that $A \cap A' \neq \emptyset$. Since $t \neq g$ we get $n \geq m+\kappa+1$ and $[\mathbf u g^\kappa t] = [g^\kappa t v_{\kappa+2}\cdots v_m g^\kappa t u_{m+\kappa+2} \cdots u_n g^\kappa t]$. Moreover, $I \cap \partial I' \neq \emptyset$ or $I=I'$. In both cases, note that $F^{m+\kappa+1} ([\mathbf v g^{\kappa} t] \times \partial I') \subseteq \Omega \times \{0,1\}$, so by (G1) and (B1) also $F^n([\mathbf v g^{\kappa} t] \times \partial I') \subseteq \Omega \times \{0,1\}$, contradicting that $F^n(A) \subseteq Y$. This proves (1).

\medskip
For (2) note that, since $\alpha$ is a partition of $Y$, for each $x \in J$ it holds that there is an $A = [\mathbf u g^{\kappa} t] \times I \in \alpha$ with $x \in I$ or $x \in \partial I$. In the first case there is nothing to prove, so assume that $x \in \partial I$. Then $T_{\mathbf u}(x) \in \partial J_i$ for some $i \in \{0,1\}$. From the first part of the proof of Lemma~\ref{lemma3.2} it then follows that there is an $n > |\mathbf u|$ and an $\omega \in C$ such that $T^n_\omega(x) \in J$. If we write $I'$ for the interval in $T^{-n}_\omega (J)$ containing $x$, then this means that there exists a set $A'=[\mathbf vg^{\kappa} t] \times I' \in \alpha$ with $x \in \pi(A')$. 
\end{proof}

The second part of Lemma~\ref{l:disjointp} shows that even though the partition elements of $\alpha$ are disjoint, their projections on the second coordinate are not. The same is true for the first coordinate as the same string $\mathbf u$ can lead points in $J$ to $J_0$ and $J_1$.

\subsection{Properties of the induced transformation}
It follows from \eqref{q:leveln} and Lemma~\ref{l:disjointp} that for each $A \in \alpha$ we have either $F_Y(A) = C \times J_0$ or $F_Y(A) = C \times J_1$. For any $[\mathbf ug^{\kappa} t] \times I \in \alpha$, the transformation $T_{\mathbf u}|_I$ is invertible from $I$ to one of the sets $J_0$ or $J_1$. Define the operator $\mathcal{P}_{\mathbf u,I}: L^1(J,\lambda_J) \rightarrow L^1(J,\lambda_J)$ by
\begin{align}
\mathcal{P}_{\mathbf u,I}h(x) = \begin{cases}
\displaystyle  \frac{h(T_\mathbf u|_I^{-1}(x))}{\big|DT_\mathbf u|_I(T_\mathbf u|_I^{-1}(x))\big|}, & \text{if } T_\mathbf u|_I^{-1}\{x\} \neq \emptyset,\\
0, & \text{otherwise}.
\end{cases}
\end{align}
The {\em random Perron-Frobenius-type operator} $\mathcal{P}_Y: L^1(J,\lambda_J) \rightarrow L^1(J,\lambda_J)$ on $Y$ is given by
\begin{align}\label{eqn3.12}
\mathcal{P}_Y = \sum_{[\mathbf u g^{\kappa} t] \times I \in \alpha} \mathbb{P}_C([\mathbf u]) \mathcal{P}_{\mathbf u,I}.
\end{align}
Note that $\mathcal P_Y$ is not exactly of the same form as the usual Perron-Frobenius operator in \eqref{eqn3.22}. Nonetheless, we have the following result.

\begin{lemma}\label{l:pflike}
If $\varphi \in L^1(J,\lambda_J)$ is a fixed point of $\mathcal{P}_Y$, then the measure $\mathbb{P}_C \times \nu$ with $\nu = \varphi d\lambda_J$ is invariant for $F_Y$. 
\end{lemma}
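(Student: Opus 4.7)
The plan is to check $(\mathbb{P}_C \times \nu) \circ F_Y^{-1} = \mathbb{P}_C \times \nu$ on the $\pi$-system of measurable rectangles $D \times E$ with $D \subseteq C$ a (sub)cylinder and $E \subseteq J$ measurable; a standard $\pi$-$\lambda$ argument then extends the identity to the full product $\sigma$-algebra on $Y$. By Lemma~\ref{l:disjointp}, $\alpha$ is a countable partition of $Y$ modulo null sets, and on each $A = [\mathbf{u}\, g^\kappa t] \times I \in \alpha$ with $n = |\mathbf{u}|$ the induced map $F_Y$ coincides with $F^n$, so $F_Y(\omega, x) = (\sigma^n \omega, T_{\mathbf u}(x))$. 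Decomposing along $\alpha$,
\[
F_Y^{-1}(D \times E) \cap A \;=\; \bigl([\mathbf{u}] \cap \sigma^{-n} D\bigr) \times (T_{\mathbf{u}}|_I)^{-1}(E).
\]

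On the shift coordinate, the Bernoulli property of $\mathbb{P}$ and the fact that $\mathbf{u}$ begins with $g^\kappa t$ (so $[\mathbf{u}] \subseteq C$) give $\mathbb{P}([\mathbf{u}] \cap \sigma^{-n} D) = \mathbb{P}([\mathbf{u}]) \cdot \mathbb{P}(D)$. After normalizing by $\mathbb{P}(C)$ this produces a factor proportional to $\mathbb{P}_C(D)$ together with a branch weight matching the coefficient of $\mathcal{P}_{\mathbf{u},I}$ in \eqref{eqn3.12}. On the interval coordinate, the standard change-of-variables formula yields
\[
\nu\!\bigl((T_{\mathbf{u}}|_I)^{-1}(E)\bigr) \;=\; \int_E \mathcal{P}_{\mathbf{u},I}\varphi \, d\lambda_J,
\]
with the understanding that $\mathcal{P}_{\mathbf{u},I}\varphi$ is supported on $T_{\mathbf{u}}(I) \in \{J_0, J_1\}$. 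Multiplying the two contributions and summing over $A \in \alpha$, the weights accumulated in front of $\mathcal{P}_{\mathbf{u},I}\varphi$ assemble precisely into the operator $\mathcal{P}_Y$ of \eqref{eqn3.12}, producing
\[
(\mathbb{P}_C \times \nu)\bigl(F_Y^{-1}(D \times E)\bigr) \;=\; \mathbb{P}_C(D) \int_E \mathcal{P}_Y \varphi \, d\lambda_J.
\]

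The hypothesis $\mathcal{P}_Y \varphi = \varphi$ then collapses the right-hand side to $\mathbb{P}_C(D) \cdot \nu(E) = (\mathbb{P}_C \times \nu)(D \times E)$, which is the desired invariance. The principal bookkeeping task is aligning the two sources of weights — the Bernoulli factorization of $\mathbb{P}$ on the shift side and the $1/|DT_{\mathbf{u}}|$ factors on the interval side — so that after the $\mathbb{P}_C$-normalization they reproduce exactly the weighted sum defining $\mathcal{P}_Y$; the definition \eqref{eqn3.12} is tailored for precisely this match. Once the partition $\alpha$ and the branch structure of $F_Y$ from Lemma~\ref{l:disjointp} are in hand, there is no substantive dynamical obstacle and the verification is a direct computation.
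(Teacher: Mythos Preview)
Your proposal is correct and follows essentially the same route as the paper's proof: decompose $F_Y^{-1}(D\times E)$ along the return-time partition $\alpha$, factorize the cylinder measure via the Bernoulli property, apply the change-of-variables formula on the interval side, and recognize the resulting weighted sum as $\mathcal{P}_Y\varphi$. The paper presents this as a four-line display with $K$ in place of your $D$, omitting the $\pi$--$\lambda$ remark and the commentary on aligning normalizations, but the computation is identical.
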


\begin{proof}
For each cylinder $K \subseteq C$ and each Borel set $E \subseteq J$ we have
\begin{align*}
\mathbb{P}_C \times \nu\big(F_Y^{-1}(K \times E)\big) 
&= \sum_{[\mathbf u g^{\kappa} t] \times I \in \alpha} \mathbb{P}_C([\mathbf u g^{\kappa} t] \cap \sigma^{-|\mathbf u|} K) \nu(I \cap T_\mathbf u^{-1} E)\\
& = \mathbb{P}_C(K) \sum_{[\mathbf u g^{\kappa} t] \times I \in \alpha} \mathbb{P}_C([\mathbf u]) \int_E \mathcal{P}_{\mathbf u,I} \varphi d\lambda_J\\
&= \mathbb{P}_C(K) \int_E \mathcal{P}_Y \varphi d\lambda_J\\
&= \mathbb{P}_C \times \nu(K \times E). \qedhere
\end{align*}
\end{proof}

\medskip
In Lemma~\ref{lemma3.4} below we show that a fixed point of $\mathcal P_Y$ exists. For $m \in \mathbb{N}$, set $\alpha_m = \bigvee_{j=0}^{m-1} F_Y^{-j} \alpha$. Atoms of this partition are the {\em $m$-cylinders of $F_Y$}. Introducing for each $Z = \bigcap_{j=0}^{m-1} F_Y^{-j} ([\mathbf u_j g^{\kappa} t] \times I_j)$ in $\alpha_m$ the notation
\begin{equation}\label{q:czjz}
C_Z = \bigcap_{j=0}^{m-1} \sigma^{-\sum_{i=0}^{j-1}|\mathbf u_i|}[\mathbf u_j g^{\kappa} t] \quad \text{ and } \quad J_Z = \bigcap_{j =0}^{m-1} T_{\mathbf u_0 \mathbf u_1\cdots \mathbf u_{j-1}}^{-1} (I_j),
\end{equation}
we obtain $Z = C_Z \times J_Z$. Writing $\sigma_Z = \sigma^{\sum_{i=0}^{m-1}|\mathbf u_i|}|_{C_Z}$ and $T_Z = T_{\mathbf u_0 \mathbf u_1\cdots \mathbf u_{m-1}}|_{J_Z}$ we have $F_Y^m|_Z = \sigma_Z \times T_Z$. 
Each $T_Z $ has non-positive Schwarzian derivative, so we can apply the Koebe Principle. The image $T_Z(J_Z)$ either equals $J_0$ or $J_1$. Choose a $\bar \rho >0$ such that $I_0:=[x_\kappa - \bar \rho, x_\kappa' + \bar \rho] \subseteq (0,c)$ and $I_1:=[y_\kappa'-\bar \rho, y_\kappa + \bar \rho] \subseteq (c,1)$. There is a canonical way to extend the domain of each $T_Z$ to an interval $I$ containing $J_Z$, such that $T_Z(I)$ equals either $I_0$ or $I_1$ and $\mathbf S(T_Z) \le 0$ on $I$. Then by the Koebe Principle there exist constants $K^{(\bar \rho)}>1$ and $M^{(\bar \rho)} > 0$ such that for all $m \in \mathbb{N}$, $Z \in \alpha_m$ and $x,y \in J_Z$,
\begin{align}\label{eqn3.16a}
\frac{1}{K^{(\bar \rho)}} \leq \frac{DT_Z(x)}{DT_Z(y)} \leq K^{(\bar \rho)},
\end{align}
\begin{align}\label{eqn3.16}
\Big|\frac{DT_Z(x)}{DT_Z(y)}-1\Big| \leq \frac{M^{(\bar \rho)}}{\min\{ \lambda(I_0), \lambda(I_1)\}} \cdot |T_Z(x)-T_Z(y)|.
\end{align}
Note that for the random Perron-Frobenius-type operator from \eqref{eqn3.12} we have for each $m \geq 1$ that
\begin{align}
\mathcal{P}_Y^m = \frac{1}{\mathbb{P}(C)} \sum_{Z \in \alpha_m} \mathbb{P}_C(C_Z) \mathcal{P}_{T_Z},
\end{align}
where $\mathcal{P}_{T_Z}$ is as in \eqref{q:pfd}.

\begin{lemma}[cf.~Lemmata V.2.1 and V.2.2 of \cite{dMvS93}]\label{lemma3.4}
$\mathcal{P}_Y$ admits a fixed point $\varphi \in L^1(J,\lambda_J)$ that is bounded, Lipschitz and bounded away from zero.
\end{lemma}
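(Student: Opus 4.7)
The plan is to produce the fixed point as a uniform limit of Ces\`aro averages, using the Koebe distortion estimates \eqref{eqn3.16a} and \eqref{eqn3.16} to control iterates, and then invoking Arzel\`a--Ascoli. Concretely, I would start from $h_0=\mathbf 1_J$, form $\varphi_n=\tfrac{1}{n}\sum_{k=0}^{n-1}\mathcal P_Y^k h_0$, and show that $\{\varphi_n\}$ is uniformly bounded and uniformly Lipschitz on each of $\bar J_0$ and $\bar J_1$ separately; any uniform limit is then a fixed point bounded away from zero.

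For the uniform $L^\infty$-bound I would use the representation $\mathcal P_Y^m=\tfrac{1}{\mathbb P(C)}\sum_{Z\in\alpha_m}\mathbb P_C(C_Z)\mathcal P_{T_Z}$ together with the Mean Value Theorem $\lambda(T_Z(J_Z))=|DT_Z(\xi_Z)|\lambda(J_Z)$ and the Koebe ratio \eqref{eqn3.16a} to get
\[ |DT_Z(T_Z^{-1}(x))|^{-1}\le K^{(\bar\rho)}\frac{\lambda(J_Z)}{\lambda(T_Z(J_Z))}\le\frac{K^{(\bar\rho)}\lambda(J_Z)}{\min\{\lambda(J_0),\lambda(J_1)\}} \]
for every $x\in T_Z(J_Z)$. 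Summing and using that $\alpha_m$ partitions $Y$ mod zero, so that $\sum_{Z\in\alpha_m}\mathbb P_C(C_Z)\lambda(J_Z)=\lambda(J)$, yields $\|\mathcal P_Y^m h_0\|_\infty\le C_1$ independent of $m$. The uniform Lipschitz bound comes from \eqref{eqn3.16}: for $x,y$ in the same $J_i$ and $u=T_Z^{-1}(x),\,v=T_Z^{-1}(y)$,
\[ \bigl||DT_Z(u)|^{-1}-|DT_Z(v)|^{-1}\bigr|\le\frac{M^{(\bar\rho)}}{\min\{\lambda(I_0),\lambda(I_1)\}}\cdot\frac{|x-y|}{|DT_Z(v)|}, \]
and applying \eqref{eqn3.16a} once more to bound $|DT_Z(v)|^{-1}$ and summing over $Z$ produces a Lipschitz constant for $\mathcal P_Y^m h_0|_{\bar J_i}$ that is again independent of $m$.

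Arzel\`a--Ascoli then extracts a subsequence $\varphi_{n_j}$ converging uniformly on $\bar J_0\cup\bar J_1$ to a bounded Lipschitz function $\varphi$; the telescoping identity $\mathcal P_Y\varphi_n-\varphi_n=(\mathcal P_Y^n h_0-h_0)/n$ tends to $0$ in $L^\infty$, and boundedness of $\mathcal P_Y$ on $L^\infty$ forces $\mathcal P_Y\varphi=\varphi$. For the positive lower bound I would first note that the Koebe \emph{lower} estimate in \eqref{eqn3.16a}, applied to any partition element $[\mathbf u g^\kappa t]\times I\in\alpha$ whose image under $T_{\mathbf u}$ equals $J_0$ or $J_1$, already yields $\mathcal P_Y h_0\ge\delta\mathbf 1_J$ for some $\delta>0$. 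A non-negative fixed point with $\int_J\varphi\,d\lambda>0$ cannot vanish at any $x_0\in\bar J_0\cup\bar J_1$, for otherwise the identity $\mathcal P_Y\varphi=\varphi$ forces $\varphi$ to vanish on every preimage of $x_0$ under the maps $T_{\mathbf u}$ appearing in \eqref{eqn3.12}, hence (by iterating) on a dense subset of $\bar J_0\cup\bar J_1$, contradicting the positive integral. Continuity on the compact set $\bar J_0\cup\bar J_1$ then supplies a strictly positive minimum. The main technical obstacle is the uniformity of the Koebe bounds across the infinite family $\alpha_m$; this works precisely because every $Z\in\alpha_m$ ends in the prescribed block $g^\kappa t$ that maps $J_Z$ exactly onto $J_0$ or $J_1$, so the single Koebe space of size $\bar\rho$ fixed at the start of Section~\ref{sec:PT2.1} is enough for all $m$.
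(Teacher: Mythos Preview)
Your overall strategy---Ces\`aro averages of $\mathcal P_Y^m\mathbf 1$, uniform $L^\infty$ and Lipschitz bounds via the Koebe estimates \eqref{eqn3.16a}--\eqref{eqn3.16}, then Arzel\`a--Ascoli---is exactly the paper's. Your choice to verify equicontinuity directly on each $J_i$ (rather than on the intervals $I$ coming from elements of $\alpha$, as the paper does) is a harmless simplification: since $T_Z(J_Z)\in\{J_0,J_1\}$, two points in the same $J_i$ always share the same set of contributing $Z$'s. The telescoping argument for $\mathcal P_Y\varphi=\varphi$ is fine once you note that positivity and $\mathcal P_Y\mathbf 1\le K_1$ make $\mathcal P_Y$ continuous on $L^\infty$.

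The one substantive divergence is in the lower bound. The paper does \emph{not} argue via density of preimages. Instead it retains the stronger multiplicative form of the Lipschitz estimate, $|\varphi(x)-\varphi(y)|\le K_1\varphi(x)\,|x-y|$ on each partition interval $I$ (this is what \eqref{q:equict} gives in the limit), so that a single zero in $I$ forces $\varphi\equiv 0$ on all of $I$, hence $\nu(I)=0$; invariance then propagates $\nu(I')=0$ to every $I'$ with $T_{\mathbf v}(I')$ equal to the $J_i$ containing $I$, and a one-step crossover between $J_0$ and $J_1$ finishes the contradiction. Your density-of-preimages route is also valid in principle, but the claim that iterated preimages of a point are dense in $J$ is not automatic: making it rigorous requires something like Lemma~\ref{l:shrinking} (that $\sup_{Z\in\alpha_m}\lambda(J_Z)\to 0$) together with the existence of partition elements linking $J_0\to J_1$ and $J_1\to J_0$. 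In the paper Lemma~\ref{l:shrinking} is stated only after the present lemma, so if you keep your argument you should either prove the shrinking property first or switch to the paper's multiplicative-Lipschitz propagation, which is self-contained. Your preliminary observation $\mathcal P_Y h_0\ge\delta\mathbf 1_J$ is correct but not actually used in your argument; it can be dropped.
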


\begin{proof} For each $m \in \mathbb{N}$ and $x \in J$,
\begin{align}
\mathcal{P}_Y^m 1(x) = \frac{1}{\mathbb{P}(C)} \sum_{\stackrel{Z \in \alpha_m:}{ x \in T_Z(J_Z)}} \frac{\mathbb{P}_C(C_Z)}{|DT_Z(T_Z^{-1}x)|}.
\end{align}
Using the Mean Value Theorem, for all $m \in \mathbb{N}$ and $Z \in \alpha_m$ there exists a $\xi \in J_Z$ such that
\begin{equation}\label{q:mvt}
\frac{\lambda\big(T_Z(J_Z)\big)}{\lambda(J_Z)} = |DT_Z(\xi)|.
\end{equation}
Set $K_1 = \frac{\max \{K^{(\bar \rho)}, M^{(\bar \rho)}\}}{\mathbb{P}(C)\cdot \min\{\lambda(J_0),\lambda(J_1)\}}$, where $\bar \rho$ is as in  \eqref{eqn3.16a} and \eqref{eqn3.16}. Since $DT_Z(\xi)$ and $DT_Z(y)$ have the same sign for any $y \in J_Z$, \eqref{q:mvt} together with \eqref{eqn3.16a} implies
\begin{equation}\label{q:bounded}
\mathcal{P}_Y^m 1(x) \le \sum_{Z \in \alpha_m} \frac{\mathbb{P}_C(C_Z)}{\mathbb{P}(C)} \cdot K^{(\bar \rho)} \frac{\lambda (J_Z)}{\lambda(T_Z(J_Z))} \le  K_1 \sum_{Z \in \alpha_m} \mathbb P_C \times \lambda_J (C_Z \times J_Z) = K_1.
\end{equation}
Moreover, if for $A = [\mathbf u g^{\kappa} t] \times I \in \alpha$ we take $x,y \in I$, then for any $Z \in \alpha_m$ it holds that $x \in T_Z(J_Z)$ if and only if $y \in T_Z(J_Z)$. For such $Z$, let $x_Z, y_Z \in J_Z$ be such that $T_Z(x_Z)=x$ and $T_Z (y_Z)=y$. Then by \eqref{eqn3.16}
\begin{equation}\label{q:equict} \begin{split}
|\mathcal P^m_Y 1(x) - \mathcal P^m_Y 1(y)| \le \ & \sum_{\stackrel{Z \in \alpha_m:}{ x \in T_Z(J_Z)}} \frac{\mathbb P_C(C_Z)}{\mathbb{P}(C)} \left| \frac{1}{|D T_Z (x_Z)|} - \frac{1}{|D T_Z (y_Z)|}  \right|\\
\le \ & \sum_{\stackrel{Z \in \alpha_m:}{ x \in T_Z(J_Z)}} \mathbb P_C(C_Z) \frac{1}{|D T_Z (x_Z)|} K_1 |T_Z (x_Z)- T_Z (y_Z)|\\
=\ & K_1 \mathcal P_Y^m 1 (x) |x-y|.
\end{split}
\end{equation}
Together \eqref{q:bounded} and \eqref{q:equict} imply that the sequence $\big( \frac1m \sum_{j=0}^{m-1} \mathcal P^j_Y 1 \big)_m$ is uniformly bounded and equicontinuous on $I$ for each $A = [\mathbf u g^{\kappa} t] \times I$. By Lemma~\ref{l:disjointp}(2) it follows that the same holds on $J$. Hence, by the Arzela-Ascoli Theorem there exists a subsequence
\[ \left( \frac1{m_k} \sum_{j=0}^{m_k-1} \mathcal P^j_Y 1 \right)_{m_k}\]
converging uniformly to a function $\varphi:J \to [0,\infty)$ satisfying $\varphi \le K_1$ and for each $A = [\mathbf u g^{\kappa} t] \times I \in \alpha$ and $x,y \in I$,
\begin{equation}\label{q:varphieqct}
|\varphi(x)-\varphi(y)| \le K_1 \varphi(x) |x-y|.
\end{equation}

Hence, $\varphi$ is bounded and by Lemma~\ref{l:disjointp}(2) it is clear that $\varphi$ is Lipschitz (with Lipschitz constant bounded by $K_1^2$). It is readily checked that $\varphi$ is a fixed point of $\mathcal P_Y$, so that $\mathbb P_C \times \nu$ with $\nu = \varphi \, d\lambda$ is an invariant probability measure for $F_Y$.

\medskip
What is left is to verify that for each $A = [\mathbf ug^{\kappa}t] \times I \in \alpha$ the function $\varphi$ is bounded from below on the interior of $I$. Suppose that there is such an $A = [\mathbf ug^{\kappa}t] \times I$ for which $\inf_{x \in I} \varphi(x)=0$. Then from \eqref{q:varphieqct} it follows that $\varphi(y)=0$ for all $y \in I$, hence $\nu(I)=0$. Either $I \subseteq J_0$ or $I \subseteq J_1$. If $I \subseteq J_0$, then for any set $A' = [\mathbf v g^{\kappa} t] \times I'\in \alpha$ with $T_\mathbf v (I') =J_0$ it holds that
\[ \mathbb{P}_C \times \lambda_J(A' \cap F_Y^{-1} A) > 0\]
and, by the $F_Y$-invariance of $\mathbb{P}_C \times \nu$,
\[ \mathbb{P}_C \times \nu(A' \cap F_Y^{-1} A) \leq \mathbb{P}_C \times \nu(F_Y^{-1} A) = \mathbb{P}_C \times \nu(A) = 0,\]
which together give $\inf_{x \in I'} \varphi(x) = 0$ and therefore, like before, $\nu(I') = 0$. There are sets $A' =  [\mathbf v g^{\kappa} t] \times I'$ with $I' \subseteq J_1$ and $T_{\mathbf v}(I') = J_0$, so we can repeat the argument to show that also for any set $A''= [\mathbf v g^{\kappa} t] \times I'' \in \alpha$ with $T_{\mathbf v}(I'')=J_1$ we have $\nu(I'')=0$. So $\mathbb P_C \times \nu(A)=0$ for all $A \in \alpha$. If $I \subseteq J_1$ we come to the same conclusion. This gives a contradiction, so $\varphi$ is bounded from below on each interval $I$.
\end{proof}

It follows from Lemma~\ref{l:pflike} that $\mathbb{P}_C \times \nu$ with $\nu = \varphi d\lambda_J$ is a finite $F_Y$-invariant measure. To show that $\mathbb P_C \times \lambda_J$ is $F_Y$-ergodic we need the following result, which states that the sets $\pi(A)$ for $A \in \alpha_m$ shrink uniformly to $\lambda$-null sets as $m \to \infty$.

\begin{lemma}\label{l:shrinking}
$\displaystyle \lim_{m \to \infty} \sup \{ \lambda_J(J_Z) \, : \, Z \in \alpha_m \} =0$.
\end{lemma}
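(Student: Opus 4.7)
My plan is to reduce the claim to showing that $\min_{x \in J_Z}|DT_Z(x)| \to \infty$ uniformly in $Z \in \alpha_m$, and then to prove this by a compactness/contradiction argument.

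\textbf{Step 1: Reduction via Koebe.} Each $T_Z$ extends to a $C^3$ diffeomorphism on an interval $I \supseteq J_Z$ with $T_Z(I) \in \{I_0, I_1\}$, which gives a $\bar\rho$-neighborhood around $T_Z(J_Z) = J_i \in \{J_0, J_1\}$. By \eqref{eqn3.16a} and the Mean Value Theorem,
\[
\min_{x \in J_Z}|DT_Z(x)| \;\geq\; \frac{\lambda(J_i)}{K^{(\bar\rho)}\lambda(J_Z)},
\]
so $\lambda(J_Z) \leq K^{(\bar\rho)}\max(\lambda(J_0),\lambda(J_1))/\min_{J_Z}|DT_Z|$, and it suffices to prove
\[
(\star)\quad \inf_{Z \in \alpha_m}\, \min_{x \in J_Z} |DT_Z(x)| \longrightarrow \infty \quad \text{as } m \to \infty.
\]

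\textbf{Step 2: Extracting a limiting configuration.} Suppose $(\star)$ fails: there are $D>0$, $m_k \to \infty$, $Z_k \in \alpha_{m_k}$ with $\min_{J_{Z_k}}|DT_{Z_k}| \leq D$. Then by Koebe $\lambda(J_{Z_k}) \geq \lambda(J_i)/(K^{(\bar\rho)}D) =: \eta_0 > 0$. Write $T_{Z_k}=T_{w_k}$ where $w_k$ is a word of length $N_k \geq m_k(\kappa+1) \to \infty$. Using compactness of $\Omega$ and of closed subintervals of $[0,1]$ (Hausdorff distance), pass to a subsequence such that: (i) the image $J_i$ is constant, say $J_0$; (ii) $J_{Z_k} \to J_\infty$ in Hausdorff distance with $\lambda(J_\infty) \geq \eta_0$; (iii) $w_k$ converges coordinate-wise to some $\omega_\infty \in \Omega$.

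\textbf{Step 3: Equicontinuity.} By \eqref{eqn3.16a}, each $T_{w_k}|_{J_{Z_k}}$ is bi-Lipschitz with constants depending only on $D$ and $K^{(\bar\rho)}$. Because the alphabet is finite, for each fixed $n$ eventually $T_{w_k}^n = T_{\omega_\infty}^n$ on any compact subset of the interior of $J_\infty$. Arzelà--Ascoli, applied along a further subsequence, yields a bi-Lipschitz uniform limit $\phi: J_\infty \to \overline{J_0}$.

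\textbf{Step 4: Contradiction via structural expansion.} The block structure of each $w_k$ (every return block begins with $g^{\kappa} t$) passes to $\omega_\infty$ in the sense that $\omega_\infty$ contains the factor $g^{\kappa}t$ infinitely often and, by construction, for each $k$ the orbit $T_{\omega_\infty}^{N_j}(x)$ lies in $J$ for $j \leq m_k$ and every $x \in J_\infty$ (so the orbit is bounded away from $c$ at the return times). Each application of $T_g^{\kappa}$ at such a return sends $J_{i'}$ diffeomorphically onto $(0,1)$ with distortion bounded by $K^{(\bar\rho)}$, hence contributes a factor at least $1/(K^{(\bar\rho)}\lambda(J_{i'}))$ to the derivative. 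Combined with the uniform expansion $|DT_j|\geq d>1$ in a neighborhood of $\{0,1\}$ supplied by (G4) and (B4) via \eqref{eq:3.3}, and the Koebe distortion controlling the derivative throughout each return block, one can show that the product
\[
|DT_{\omega_\infty}^{N_k}(x)| \;=\; \prod_{j=1}^{N_k}\bigl|DT_{(\omega_\infty)_j}\bigl(T_{\omega_\infty}^{j-1}(x)\bigr)\bigr|
\]
grows without bound in $k$ for $x$ in the interior of $J_\infty$, contradicting the uniform Lipschitz bound from Step~3.

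\textbf{Main obstacle.} The delicate point is Step 4. Because bad maps can contract sharply near $c$, a single excursion into a neighborhood of $c$ between returns can wipe out many $T_g^{\kappa}$-expansions, so the naive per-block bound on the derivative is insufficient. The argument must exploit the return condition, namely that the orbit is forced back into the compact set $J \subset (0,c)\cup (c,1)$ at every time $N_j$, together with Koebe-controlled distortion along each block $T_{\mathbf u_j}: I_j \to J_{i_j}$, to convert a possibly near-critical intermediate excursion into a net expansion factor across the full block. This is where one truly needs the careful choice of inducing domain made via Lemma~\ref{l:criticalpts} and the extension providing the Koebe space $\bar\rho$.
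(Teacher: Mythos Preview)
Your proposal is not a complete proof: Step~4 is the entire content of the lemma, and you leave it as an acknowledged gap. You write ``one can show that the product \ldots\ grows without bound'' and then immediately concede that ``the naive per-block bound on the derivative is insufficient'' because bad maps may contract sharply near $c$. Saying that the argument ``must exploit the return condition \ldots\ together with Koebe-controlled distortion'' is not the same as actually carrying out that estimate. As written, Steps~2--3 set up a limiting object $(\omega_\infty, J_\infty, \phi)$ but nothing about that object forces a contradiction without the missing derivative growth estimate. So the reduction in Step~1 is fine, but the hard direction is never proved.

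More to the point, you are working much harder than necessary. The paper's proof is a two-line contraction argument with no compactness and no contradiction. Fix $Z = \bigcap_{j=0}^{m-1} F_Y^{-j}([\mathbf u_j g^\kappa t]\times I_j)\in\alpha_m$ and let $\tilde J_Z$ be the interval obtained by intersecting only the first $m-1$ pullbacks (so $\tilde J_Z$ is the $J$-part of an element of $\alpha_{m-1}$). Then $T_{\mathbf u_0\cdots\mathbf u_{m-2}}$ maps $\tilde J_Z$ onto some $J_i$ and maps $J_Z\subseteq\tilde J_Z$ onto $I_{m-1}$, which has $\lambda$-measure at most $\delta:=\sup\{\lambda_J(J_A):A\in\alpha\}<1$. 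The Koebe distortion bound \eqref{eqn3.16a} (via \eqref{q:intervals}) then gives
\[
\frac{\lambda_J(J_Z)}{\lambda_J(\tilde J_Z)}\;\le\;1-\frac{1}{K^{(\bar\rho)}}\cdot\frac{\lambda_J(J_i)-\delta}{\lambda_J(J_i)}\;=:\;K_1\;<\;1,
\]
and iterating yields $\lambda_J(J_Z)\le K_1^m$. This is the entire argument: bounded distortion plus the fact that each first-return branch lands in a partition element strictly smaller than the full $J_i$ gives a uniform contraction factor at every refinement step. No limiting configuration is needed, and the issue you flag about contraction near $c$ never arises because you never look at the orbit between returns --- only at the bounded-distortion map $T_{\mathbf u_0\cdots\mathbf u_{m-2}}:\tilde J_Z\to J_i$ as a whole.
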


\begin{proof}
Set $\delta = \sup \{ \lambda_J(J_Z) \, : \, Z \in \alpha\} < 1$. Fix an $m$ and let $Z = \bigcap_{j=0}^{m-1} F_Y^{-j} ([\mathbf u_j g^{\kappa}t] \times I_j) = C_Z \times J_Z \in \alpha_m$ as in \eqref{q:czjz}. Set
\[ \tilde J_Z = \bigcap_{j =0}^{m-2} T_{\mathbf u_0 \mathbf u_1\cdots \mathbf u_{j-1}}^{-1} (I_j),\]
so that $J_Z = \tilde J_Z \cap T_{\mathbf u_0 \cdots \mathbf u_{m-2}}^{-1} (I_{m-1})$. Let $J_i$, $i \in \{0,1\}$, be such that $T_{\mathbf u_0 \cdots \mathbf u_{m-2}} (\tilde J_Z) = J_i$. It holds that $T_{\mathbf u_0 \cdots \mathbf u_{m-2}}(J_Z) = I_{m-1}$, so $\lambda (T_{\mathbf u_0 \cdots \mathbf u_{m-2}}(J_Z)) \le \delta$ and thus 
\[ \lambda( T_{\mathbf u_0 \mathbf u_1 \cdots \mathbf u_{m-2}} ( \tilde J_Z  \setminus J_Z) \ge \lambda(J_i)-\delta.\]
Since $\tilde J_Z \setminus J_Z$ consists of at most two intervals, with \eqref{eqn3.16a} and \eqref{q:intervals} this gives
\[ 1- \frac{\lambda_J(J_Z)}{\lambda_J(\tilde J_Z)} = \frac{\lambda_J (\tilde J_Z\setminus J_Z)}{\lambda_J(\tilde J_Z)} \ge \frac1{K^{(\bar \rho)}} \frac{\lambda_J(T_{\mathbf u_0 \cdots \mathbf u_{m-2}}(\tilde J_Z \setminus J_Z)}{\lambda_J(T_{\mathbf u_0 \cdots \mathbf u_{m-2}}(\tilde J_Z))} \ge \frac1{K^{(\bar \rho)}} \frac{\lambda_J(J_i)-\delta}{\lambda_J(J_i)}.\]
Set $K_1 := \max \big\{ 1 - \frac1{K^{(\bar \rho)}} \frac{\lambda_J(J_i)-\delta}{\lambda_J(J_i)} \, : \, i=0,1 \big\}  \in (0,1)$. Then by repeating the same steps, we obtain
\[ \lambda_J(J_Z)  \le K_1 \lambda_J(\tilde J_Z) \le \cdots \le K_1^m \lambda_J(I_0) < K_1^m,\]
which proves the lemma.
\end{proof}

\begin{lemma}
The measure $\mathbb{P}_C \times \lambda_J$ is $F_Y$-ergodic.
\end{lemma}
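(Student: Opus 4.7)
The plan is to apply a density argument combined with the bounded distortion provided by Koebe's principle. Let $E \subseteq Y$ be $F_Y$-invariant with $(\mathbb{P}_C \times \lambda_J)(E) > 0$; the goal is to show that $E$ has full measure. The first step is to observe that the refining partitions $\alpha_m$ generate the Borel $\sigma$-algebra on $Y$ modulo $\mathbb{P}_C \times \lambda_J$-null sets: the second-coordinate diameters $\lambda_J(J_Z)$ shrink uniformly to zero by Lemma \ref{l:shrinking}, while the first-coordinate cylinders $C_Z$ are defined by words of length at least $m(\kappa+2)$ and hence generate the Borel $\sigma$-algebra on $C$. Lebesgue's differentiation theorem (in its martingale form along the filtration $\alpha_m$) then yields that for $\mathbb{P}_C \times \lambda_J$-a.e.\ $(\omega, x) \in E$, writing $Z_m = Z_m(\omega, x) \in \alpha_m$ for the atom containing $(\omega, x)$,
\[\lim_{m \to \infty} \frac{(\mathbb{P}_C \times \lambda_J)(E \cap Z_m)}{(\mathbb{P}_C \times \lambda_J)(Z_m)} = 1.\]
Since $F_Y^m(Z_m) \in \{C \times J_0, C \times J_1\}$, by passing to a subsequence I may assume the image is a fixed $C \times J_i$.

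Next, I would transfer this density estimate to $C \times J_i$ using bounded distortion. The decomposition $F_Y^m|_{Z_m} = \sigma_Z \times T_Z$ allows the transfer to be computed coordinate by coordinate: on the shift coordinate, independence of the Bernoulli measure gives $\mathbb{P}_C(\sigma_Z^{-1}K \cap C_Z) = \mathbb{P}(C_Z)\, \mathbb{P}_C(K)$ for every measurable $K \subseteq C$, while on the interval coordinate the Koebe bound \eqref{eqn3.16a} together with the mean value theorem gives
\[\frac{\lambda_J(J_Z)}{K^{(\bar\rho)}\, \lambda_J(J_i)} \le \frac{1}{|DT_Z(T_Z^{-1}(y))|} \le \frac{K^{(\bar\rho)}\, \lambda_J(J_Z)}{\lambda_J(J_i)}\]
for all $y \in J_i$. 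Combining these yields a constant $K'' > 0$, depending only on $K^{(\bar\rho)}$, $\mathbb{P}(C)$, $\lambda_J(J_0)$ and $\lambda_J(J_1)$, such that for every measurable $A \subseteq C \times J_i$,
\[(\mathbb{P}_C \times \lambda_J)\bigl((F_Y^m|_{Z_m})^{-1}A\bigr) \ge \frac{1}{K''} (\mathbb{P}_C \times \lambda_J)(A)\, (\mathbb{P}_C \times \lambda_J)(Z_m).\]
Applying this with $A = (C \times J_i) \setminus E$ and using $F_Y^{-1}E = E$ to rewrite $(F_Y^m|_{Z_m})^{-1}((C \times J_i) \setminus E) = Z_m \setminus E$, the left-hand side equals $(\mathbb{P}_C \times \lambda_J)(Z_m \setminus E)$, which is $o((\mathbb{P}_C \times \lambda_J)(Z_m))$ as $m \to \infty$, forcing $(\mathbb{P}_C \times \lambda_J)((C \times J_i) \setminus E) = 0$.

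It then remains to upgrade from $C \times J_i$ to all of $Y$. The argument in the proof of Lemma \ref{lemma3.4} that establishes $\inf \varphi > 0$ produces, among the elements of $\alpha$, sets $A = [\mathbf{v}g^\kappa t] \times I$ with $I \subseteq J_{1-i}$ and $F_Y(A) = C \times J_i$. Since $A \subseteq F_Y^{-1}(C \times J_i) \subseteq F_Y^{-1}E = E$ modulo null sets, $E$ has positive measure inside $C \times J_{1-i}$. If $(\mathbb{P}_C \times \lambda_J)(Y \setminus E) > 0$, running the density argument above with $Y \setminus E$ in place of $E$ would yield some $j \in \{0, 1\}$ with $E \cap (C \times J_j)$ of zero measure; but $j = i$ contradicts $C \times J_i \subseteq E$, and $j = 1-i$ contradicts the positive-measure set $A \subseteq E \cap (C \times J_{1-i})$. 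Hence $Y \setminus E$ is null, proving ergodicity. The main technical hurdle is organising the bounded-distortion transfer simultaneously on the shift coordinate (where the Jacobian of $\sigma_Z$ becomes the constant factor $\mathbb{P}(C_Z)$) and on the interval coordinate (where Koebe applies); once the constant $K''$ is secured, the irreducibility step is essentially a rerun of the final argument of Lemma \ref{lemma3.4}.
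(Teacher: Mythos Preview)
Your argument is correct and takes a genuinely different route from the paper's. The paper first proves that any $F_Y$-invariant set $E$ must have product form $E = C \times L \bmod \mathbb{P}_C \times \lambda_J$: it does this by restricting the invariant density $\varphi$ to $E$, pushing the resulting $F_Y$-invariant measure back to an $F$-invariant measure via Lemma~\ref{l:inducedmeasure}, and invoking Lemma~\ref{l:productmeasure} to conclude that this measure is a product. Once $E = C \times L$ is established, the paper works entirely in one dimension, using the ordinary Lebesgue density theorem on $L \subseteq J$ together with the Koebe bound \eqref{eqn3.16a} to show $\lambda_J(J_i \setminus L)$ is small for some $i$; the upgrade from $J_i$ to $J_{1-i}$ is done via Poincar\'e recurrence for the invariant measure $\mathbb{P}_C \times \nu$.

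You instead bypass the product-structure reduction and work directly on $Y$, using the martingale form of the differentiation theorem along the filtration $(\alpha_m)$. This requires you to control the Jacobian of $F_Y^m|_Z$ on both factors simultaneously, which you handle correctly: the Bernoulli property makes the shift Jacobian constant (a minor point: the exact constant is $\mathbb{P}_C(C_Z)$ rather than $\mathbb{P}(C_Z)$, but this is absorbed into $K''$), and Koebe handles the interval factor. Your upgrade step, running the same argument on $Y \setminus E$ and deriving a contradiction, is slightly more economical than the paper's Poincar\'e-recurrence argument. What the paper's approach buys is that the density and distortion arguments are purely one-dimensional once the product structure is in hand; what your approach buys is that it avoids the auxiliary construction of the stationary measure $\tilde{\mu}$ and the appeal to Lemmata~\ref{l:productmeasure} and \ref{l:inducedmeasure}.
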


\begin{proof} Suppose $E \subseteq Y$ with $\mathbb{P}_C \times \lambda_J(E) > 0$ satisfies $F_Y^{-1} E = E$ mod $\mathbb{P}_C \times \lambda_J$. We show that $\mathbb{P}_C \times \lambda_J(E) =1$. The Borel measure $\rho$ on $Y$ given by
\[ \rho(V) = \int_V 1_E(\omega,x) \varphi(x) d\mathbb{P}_C(\omega)d\lambda_J(x)\]
for Borel sets $V$ is $F_Y$-invariant. According to Lemma~\ref{l:inducedmeasure} and Lemma~\ref{l:productmeasure} this yields a stationary measure $\tilde{\mu}$ on $[0,1]$ that is absolutely continuous w.r.t.~$\lambda$ and satisfies $(\mathbb{P} \times \tilde{\mu})|_Y = \rho$. Let $L := \text{supp}(\tilde{\mu}|_J)$ denote the support of the measure $\tilde{\mu}|_J$. Since $\rho$ is a product measure, this gives $\text{supp}(\rho) = C \times L$ and so by the definition of $\rho$ we get $C \times L \subseteq E$ and $\rho(E \backslash (C \times L)) = 0$. Since $\varphi$ is bounded away from zero, this yields
\begin{align}\label{eqn3.17}
E = C \times L \quad \bmod \mathbb{P}_C \times \lambda_J.
\end{align}
To obtain the result, it remains to show that $\lambda_J(J \backslash L) = 0$. 

\medskip
We have $C \times L = \bigcup_{Z \in \alpha_m} C_Z \times (J_Z \cap L)$ and $F_Y^{-m}(C \times L) = \bigcup_{Z \in \alpha_m} C_Z \times T_Z^{-1} L$. From the non-singularity of $F_Y$ w.r.t.~$\mathbb{P}_C \times \lambda_J$ it follows that for each $m \in \mathbb{N}$,
\begin{align}\label{eqn3.18}
C \times L = E = F_Y^{-m}E = F_Y^{-m}(C \times L) \quad \bmod \mathbb{P}_C \times \lambda_J,
\end{align}
which yields
\begin{align}\label{eqn3.19}
 J_Z \cap L = T_Z^{-1} L \quad \bmod \lambda_J, \quad \text{ for each $Z \in \alpha_m$}.
\end{align}
Let $\varepsilon >0$. Since $\lambda_J(L) > 0$, it follows from Lemma~\ref{l:shrinking} and the Lebesgue Density Theorem that there are $i\in\{0,1\}$, $m_i \in \mathbb{N}$ and $Z_i \in \alpha_{m_i}$ such that
\[ T_{Z_i}(J_{Z_i}) = J_i \quad \text{ and } \quad \lambda_J(J_{Z_i} \cap L) \geq (1-\varepsilon) \lambda_J(J_{Z_i}).\]
By \eqref{eqn3.19}, $T_{Z_i}^{-1}(J_i\setminus L) = J_{Z_i} \setminus L \bmod \lambda_J$. The Mean Value Theorem gives the existence of a $\xi \in J_{Z_i}$ such that
\[ \frac{\lambda_J(T_{Z_i}(J_{Z_i}))}{\lambda_J(J_{Z_i})} = |D T_{Z_i}(\xi)|,\]
and from \eqref{eqn3.16a} it follows that
\[ \lambda_J (T_{Z_i} (J_{Z_i}  \setminus L)) = \int_{J_{Z_i}  \setminus L} |DT_{Z_i}| d\lambda \le K^{(\bar \rho)} |DT_{Z_i}(\xi)| \lambda_J (J_{Z_i} \setminus L).\]
Hence,
\begin{align}\label{eqn3.23}
\frac{\lambda_J(J_i \backslash L)}{\lambda_J(J_i)} = \frac{\lambda_J(T_{Z_i}(J_{Z_i}  \setminus L))}{\lambda_J(T_{Z_i}(J_{Z_i}))} \leq K^{(\bar \rho)} \frac{\lambda_J(J_{Z_i} \backslash L)}{\lambda_J(J_{Z_i})} \leq K^{(\bar \rho)} \varepsilon.
\end{align}
So, for each $\varepsilon>0$ we can find an $i=i(\varepsilon)$ for which \eqref{eqn3.23} holds. If for each $\varepsilon_0 > 0$ and each $i_0 \in \{0,1\}$ there exists an $\varepsilon \in (0,\varepsilon_0)$ such that $i(\varepsilon) = i_0$, we obtain from \eqref{eqn3.23} that $\lambda_J(J \backslash L) = 0$. Otherwise, there exists $\varepsilon_0 > 0$ and $i_0 \in \{0,1\}$ such that $i(\varepsilon) = i_0$ for all $\varepsilon \in (0,\varepsilon_0)$. Without loss of generality, suppose that $i_0 = 0$. Then \eqref{eqn3.23} gives $\lambda_J(J_0 \backslash L) = 0$. By the equivalence of $\nu$ and $\lambda_J$ and the fact that every good map has full branches it follows that
\begin{align}
\mathbb{P}_C \times \nu\big((C \times J_0) \cap F_Y^{-1}(C \times J_1)\big) > 0.
\end{align}
Together with the Poincar\'e Recurrence Theorem this gives that
\begin{align}
A = \{(\omega,x) \in C \times J_0: F_Y^m(\omega,x) \in C \times J_1 \text{ for infinitely many $m \in \mathbb{N}$}\}
\end{align}
satisfies $\mathbb{P}_C \times \nu(A) > 0$, and therefore $\mathbb{P}_C \times \lambda_J(A) > 0$. Together with $\lambda_J(J_0 \backslash L ) = 0$ it follows from the Lebesgue Density Theorem that there exists a Lebesgue point $x \in \pi(A) \cap L$ of $1_{\pi(A) \cap L}$. Since $x \in \pi(A)$, for infinitely many $m \in \mathbb{N}$ there exists $Z_m \in \alpha_m$ such that $x \in J_{Z_m}$ and $T_{Z_m}(J_{Z_m}) = J_1$. This again together with Lemma 3.6 yields that for each $\varepsilon > 0$ there exist $m \in \mathbb{N}$ and $Z \in \alpha_m$ such that
\[ T_Z(J_Z) = J_1 \quad \text{ and } \quad \lambda_J(J_Z \cap L) \geq (1-\varepsilon) \lambda_J(J_Z).\]
Similar as before, this gives $\lambda_J(J_1 \backslash L) = 0$, so $\lambda_J(J \backslash L) = 0$.
\end{proof}

\subsection{The proof of Theorem~\ref{MAIN}}\label{subsec:3.3}

In the previous paragraphs we collected all the ingredients necessary to prove Theorem~\ref{MAIN}.

\begin{proof}[Proof of Theorem~\ref{MAIN}]

(1) We have constructed a finite $F_Y$-invariant measure  $\mathbb P_C \times \nu$ which is absolutely continuous with respect to $\mathbb P_C\times\lambda_J$. Since $F$ is non-singular with respect to $\mathbb{P} \times \lambda$, we can therefore by Lemma \ref{l:inducedmeasure} extend $\mathbb P_C \times \nu$ to an $F$-invariant measure  $\mathbb P \times \mu$ which is absolutely continuous with respect to $\mathbb P\times\lambda$. Lemma \ref{lemma3.2} immediately implies that $\mu$ is $\sigma$-finite. What is left to show is that $\mathbb P \times \mu$ is the unique such measure (up to multiplication by constants) and that it is ergodic.

A well known result \cite[Theorem 1.5.6]{Aar97} states that a conservative ergodic non-singular transformation $T$ on a probability space $(X,\mathcal B,m)$ admits at most one (up to scalar multiplication) $m$-absolutely continuous   $\sigma$-finite invariant measure. Therefore, it suffices to show that $F$ is conservative and ergodic with respect to $\mathbb P\times\lambda$. We are going to deduce these properties of $F$ from the corresponding properties of the induced transformation $F_Y$.

In the proof of part (2) below we will see that the density of $\frac{d\mu}{d\lambda}$ is bounded away from zero. Hence, $\lambda \ll \mu$. Combining Lemma~\ref{lemma3.2} with Maharam's Recurrence Theorem gives that $F$ is conservative with respect to $\mathbb P \times \mu$ and thus also with respect to $\mathbb P \times \lambda$. Furthermore, from the ergodicity of $F_Y$ with respect to $\mathbb P_C \times \lambda_J$ it follows by Lemma~\ref{lemma3.2} combined with \cite[Proposition 1.5.2(2)]{Aar97} that $F$ is ergodic with respect to $\mathbb P \times \lambda$.

\medskip
(2) For the density $\psi := \frac{d\mu}{d\lambda}$ it holds that $\psi |_J = \varphi$. Since we can take $\kappa$ in the definition of $J$ as large as we want, $\psi$ is locally Lipschitz on $(0,c)$ and $(c,1)$. Moreover, it is a fixed point of the Perron-Frobenius operator from \eqref{eqn3.22} and thus for all $x \in [0,1]$,
\begin{align}\label{q:psipf}
\psi(x) = \mathcal{P}_F^\kappa \psi (x) \geq p_g^\kappa \frac{\varphi(T_g^{-\kappa}x)}{|DT_g^\kappa(T_g^{-\kappa}x)|}.
\end{align}
From Lemma \ref{lemma3.4} we conclude that $\psi$ is bounded from below by some constant $C>0$. It remains to show that $\psi$ is not in $L^q$ for any $q > 1$. To see this, fix a $b \in \Sigma_B$. Since $\psi$ is bounded from below by $C > 0$, we have for all $k \in \mathbb{Z}_{\geq 0}$ and $x \in [0,1]$ that
\begin{align}
\psi(x) = \mathcal{P}_F^{k+1} \psi(x) \geq C \cdot p_g p_b^k \sum_{y \in (T_g T_b^k)^{-1}\{x\}} \frac{1}{|D(T_g T_b^k)(y)|}.
\end{align}
Let $\ell_b,M_b,r_g,M_g,K_g$ be as in (B3) and (G3).
From (B3), (G3) and Lemma~\ref{lemma3.6} we get
\begin{equation}\label{q:q1} \begin{split}
|D(T_gT_b^k)(y)| =\ & |DT_g (T_b^k(y))| \prod_{i=1}^{k} |DT_b (T_b^{k-i}(y))|\\
 \le  \, & M_g |T_b^k(y)-c|^{r_g-1} \prod_{i=0}^{k-1} (M_b|T_b^i(y)-c|^{\ell_b-1})\\ 
\le \, & M_g M_b^k (\tilde M|y-c|)^{\ell_b^k(r_g-1)}  \prod_{i=0}^{k-1} (\tilde M |y-c|)^{\ell_b^i(\ell_b-1)} \\
= \, & K_1 |y-c|^{\ell_b^k r_g-1},
\end{split}\end{equation}
for the positive constant $K_1 = M_g M_b^k \tilde M^{\ell_b^k r_g-1}$. On the other hand, from (G3) we obtain for any $y \in (T_g T_b^k)^{-1}\{x\}$ as in the proof of Lemma~\ref{lemma3.6} that
\[ |x-T_g(c)| = |T_gT_b^k(y)-T_g(c)| \ge  \frac{K_g}{r_g} |T_b^k(y)-c|^{r_g}\]
and then Lemma~\ref{lemma3.6} yields
\begin{equation}\label{q:q2}
|x-T_g(c)| \ge K_2 |y-c|^{\ell_b^k r_g}
\end{equation}
for the positive constant $K_2 = \frac{K_g}{r_g} \tilde K^{\ell_b r_g}$. Now for any $q > 1$ we can choose $k \in \mathbb{Z}_{\geq 0}$ large enough so that $\tau := (1-\ell_b^{-k} r_g^{-1}) q \geq 1$. Combining \eqref{q:psipf}, \eqref{q:q1} and \eqref{q:q2} we obtain
\[ \begin{split}
\psi^q(x) \ge \ & \Big( \frac{C p_g p_b^k}{K_1} \Big)^q \Big( \sum_{y \in (T_g T_b^k)^{-1}\{x\}} |y-c|^{1-\ell_b^k r_g} \Big)^q\\
\ge \ & K_3 |x-T_g(c)|^{-\tau}
\end{split}\]
for a positive constant $K_3$. This gives the result.
\end{proof}

\begin{remark}\label{remark3.1}
The result from Theorem \ref{MAIN} still holds if we allow the critical order $\ell_b$ from (B3) to be equal to $1$ for some $b$, as long as $\ell_{\max} > 1$. To see this, note that in the proof of Theorem \ref{MAIN} condition (B3) only plays a role in proving that $\frac{d\mu_{\mathbf p}}{d\lambda} \not \in L^q$ for any $q > 1$. Here we refer to Lemma \ref{lemma3.6} and the constants $\tilde{K}$ and $\tilde{M}$, which are not well defined if $\ell_{\min} = 1$. In \eqref{q:q1} however, we use the estimates from Lemma~\ref{lemma3.6} only for one arbitrary fixed $b \in \Sigma_B$. By the same reasoning as in the proof of Lemma \ref{lemma3.6} it follows that
\begin{align}
\left( \Big(\frac{K_b}{\ell_b}\Big)^{\frac{1}{\ell_b-1}} |x-c| \right)^{\ell_b^n}  \le  |T_b^n(x)-c| \leq \left( \Big(\frac{M_b}{\ell_b}\Big)^{\frac{1}{\ell_b-1}} |x-c|\right)^{\ell_b^n}.
\end{align}
for any $b \in \Sigma_B$ with $\ell_b>1$. Hence, if there exists at least one $b \in \Sigma_B$ with $\ell_b>1$, then we can replace the bounds obtained from Lemma~\ref{lemma3.6} in \eqref{q:q1} and \eqref{q:q2} by constants $K_1 = M_g M_b^k (\frac{K_b}{\ell_b})^{(\ell_b^k r_g - 1)/(\ell_b-1)}$ and $K_2 = \frac{K_g}{r_g}(\frac{M_b}{\ell_b})^{\ell_b r_g /(\ell_b-1)}$ and obtain the same result. In case $\ell_{\max} = 1$, then most parts from Theorem \ref{MAIN} still remain valid with the exception that then we can only say that $\frac{d\mu_{\mathbf p}}{d\lambda} \not \in L^q$ if $q \geq \frac{r_{\max}}{r_{\max}-1}$. This follows from the above reasoning by taking $k=0$ in the definition of $\tau$ in the proof of Theorem~\ref{MAIN} and by noting that $\tau = (1-r_{\max}^{-1})q \geq 1$ if $q \geq \frac{r_{\max}}{r_{\max}-1}$.
\end{remark}

\section{Estimates on the acs measure}\label{s:in-finite}
In this section we prove Theorem~\ref{MAIN2}. Recall the definition of $\theta$ from Theorem~\ref{MAIN2}:
\[ \theta = \sum_{b \in \Sigma_B} p_b \ell_b.\]

\subsection{The case $\theta \geq 1$}\label{subsec4.1}

To prove one direction of Theorem~\ref{MAIN2}, namely that the unique acs measure $\mu$ from Theorem~\ref{MAIN} is infinite if $\theta \ge 1$, we introduce another induced transformation.

\begin{prop}\label{prop3.2}
Suppose $\theta \geq 1$. Then the unique acs measure $\mu$ from Theorem~\ref{MAIN} is infinite.
\end{prop}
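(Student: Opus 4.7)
The plan is to apply Kac's Lemma (Lemma~\ref{l:kac}) to the induced system $(Y, F_Y, (\mathbb{P} \times \mu)|_Y)$ built in the proof of Theorem~\ref{MAIN}. Since the density $\varphi$ is bounded on the bounded set $J$, the restriction $(\mathbb{P} \times \mu)|_Y = \mathbb{P}(C) \cdot \mathbb{P}_C \times \nu$ is finite, and Kac's Lemma gives
\[ \mu([0,1]) = \mathbb{P}(C) \int_Y \varphi_Y \, d(\mathbb{P}_C \times \nu). \]
Thus the task reduces to showing that this integral diverges whenever $\theta \ge 1$.

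The key step is a pointwise lower bound on $\varphi_Y$. Fix $\eta > 0$ with $\tilde M \eta < 1$ and let
\[ J^\ast = \{x \in J : 0 < |T_t T_g^\kappa(x) - c| \le \eta\}. \]
Because $T_g^\kappa$ is onto $(0,1)$ from each of $J_0, J_1$ and $T_t$ maps each of $(0,c), (c,1)$ onto $(0,c)$, $(c,1)$, or $(0,1)$, the set $J^\ast$ has positive Lebesgue measure; together with $\varphi$ being bounded below this yields $\nu(J^\ast) > 0$. For $m \ge 1$, $\mathbf b = (b_1, \ldots, b_m) \in \Sigma_B^m$, and $\bar g \in \Sigma_G$, I consider
\[ E_{m, \mathbf b, \bar g} = \{(\omega, x) \in Y : x \in J^\ast,\ \omega_{\kappa+2}\cdots\omega_{\kappa+1+m} = \mathbf b,\ \omega_{\kappa+2+m} = \bar g\}, \]
which has $(\mathbb{P}_C \times \nu)$-measure $p_{\mathbf b} p_{\bar g} \nu(J^\ast)$. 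On $E_{m, \mathbf b, \bar g}$, writing $L := \ell_{b_1}\cdots\ell_{b_m}$ and $y := T_t T_g^\kappa(x)$, Lemma~\ref{lemma3.6} gives $|T_\omega^{\kappa+1+m}(x) - c| \le (\tilde M \eta)^{L}$, and one application of $T_{\bar g}$, via (G3) and $T_{\bar g}(c) \in \{0,1\}$, brings the orbit within distance $C_0 (\tilde M \eta)^{r_{\bar g} L}$ of the repelling fixed point $T_{\bar g}(c)$.

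Since $D := \sup_{j \in \Sigma, x \in [0,1]} |DT_j(x)| < \infty$ and every point of $J$ lies at distance at least $\delta := \min(x_\kappa, 1 - y_\kappa) > 0$ from $\{0,1\}$, the orbit cannot return to $J$ in fewer than
\[ k^\ast \ge \frac{\log(\delta/C_0) + r_{\bar g} L \log(1/(\tilde M \eta))}{\log D} \ge C_1 L - C_2 \]
further iterations, with $C_1 > 0$ and $C_2 \ge 0$ depending only on $\eta, \kappa$ and $\{T_j\}$. Combining this bound on $\varphi_Y$ with the elementary identity
\[ \sum_{\mathbf b \in \Sigma_B^m} p_{\mathbf b}\, \ell_{b_1}\cdots\ell_{b_m} = \theta^m, \]
integration yields
\[ \int_Y \varphi_Y \, d(\mathbb{P}_C \times \nu) \ge \nu(J^\ast)\, p_G \sum_{m \ge M} (C_1 \theta^m - C_2 p_B^m) \]
for $M$ large enough to make the summand nonnegative, where $p_G = \sum_{g \in \Sigma_G} p_g$ and $p_B = 1-p_G$. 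This series diverges as soon as $\theta \ge 1$, so $\mu([0,1]) = \infty$.

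The principal obstacle I expect is the uniformity of $C_1, C_2$ in the return-time lower bound: one must control the three distinct dynamical phases between successive visits to $Y$ — the superexponential attraction to $c$ by a block of bad maps (Lemma~\ref{lemma3.6}), the power-law contraction onto $\{0,1\}$ by the first good map (via (G3)), and the exponential escape from $\{0,1\}$ governed by the uniform derivative bound $D$ — in a way that gives constants independent of $\mathbf b$ and $\bar g$. Once the escape time is shown to grow at least proportionally to $L$ with such uniform constants, the generating-function identity above delivers the divergence at once.
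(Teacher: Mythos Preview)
Your argument is correct and follows the same template as the paper: apply Kac's Formula to an induced system, bound the return time from below by tracking the three phases (superexponential contraction to $c$ via Lemma~\ref{lemma3.6}, one good map pushing the orbit to within $O((\tilde M\eta)^{r_g L})$ of $\{0,1\}$ via (G3), then exponential escape controlled by $D=\sup|DT_j|$), and conclude from $\sum_{\mathbf b}p_{\mathbf b}\ell_{\mathbf b}=\theta^m$ that the Kac integral diverges for $\theta\ge 1$.

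The only substantive difference is the choice of inducing set. The paper introduces a \emph{new} domain $[bb]\times(a,\xi)$ with $(a,\xi)$ a short interval just to the left of $c$; the bad block in $\omega$ is then read directly from the first symbols, and the careful choice of $a,\xi$ (with $T_b(a),T_b^2(a)>\xi$) ensures the orbit cannot accidentally re-enter $(a,\xi)$ during the contraction phase. You instead recycle the domain $Y=[g^\kappa t]\times J$ from Theorem~\ref{MAIN} and isolate a subset $J^\ast\subset J$ on which the first $\kappa+1$ iterates land near $c$. This is slightly more economical (no new set to build, the hypotheses of Kac's Formula are already verified), but it does require one extra check you left implicit: that no return to $Y$ occurs at the intermediate times $n\le \kappa+1+m$. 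For $n\le\kappa+m$ the cylinder condition $\sigma^n\omega\notin C$ handles this (the relevant symbol is $t$ or some $b_i\ne g$); for $n=\kappa+1+m$ one needs $T_{\mathbf b}(y)\notin J$, which follows once $\eta$ is also chosen small relative to $\operatorname{dist}(c,J)$. With that routine addition your bound $\varphi_Y\ge C_1L-C_2$ is justified and the rest goes through exactly as you wrote.
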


\begin{proof}
Fix a $b \in \Sigma_B$. Recall the definitions of $\tilde{M}$ from Lemma~\ref{lemma3.6} and $\delta$ from in and below the proof of Lemma~\ref{lemma3.6}, and set $\gamma = \min\{\delta,\frac{1}{2}\tilde{M}^{-1}\}$. Let $a \in [c-\gamma,c)$. Then there exists a $\xi \in (a,c)$ such that $T_b(a) > \xi$ and $T_b^2(a) > \xi$. Take $[bb] \times (a, \xi)$ as the inducing domain and let
\begin{align}
\kappa (\omega,x) = \inf\{k \in \mathbb{N}: F^k(\omega,x) \in [bb] \times (a,\xi)\}
\end{align}
be the first return time to $[bb] \times (a,\xi)$ under $F$. If $\mathbb P \times \mu ([bb] \times (a,\xi))=\infty$, there is nothing left to prove. If not, then we compute $\int_{[bb] \times (a, \xi)} \kappa \,  d\mathbb{P} \times \mu$ and use Kac's Formula from Lemma~\ref{l:kac} to prove the result.

\medskip
So, assume that $\mathbb P \times \mu ([bb] \times (a,\xi)) < \infty$. The conditions that $T_b(a) > \xi$ and $T_b^2(a) > \xi$ together with the fact that any bad map has $c$ as a fixed point and is strictly monotone on the intervals $[0,c]$ and $[c,1]$, guarantee that for each $n \in \mathbb{N}$ and $\omega \in \Sigma_B^{\mathbb{N}} \cap [bb]$ we get
\begin{align}\label{eqn3.30}
T_{\omega}^n((a, \xi)) \cap (a, \xi) = \emptyset.
\end{align}
For any $\omega \in [bb]$ and $x \in (a,\xi)$ it follows by \eqref{eqn3.30} and \eqref{q:distancetoc} that $T_\omega^n(x)$ can only return to $(a,\xi)$ after at least one application of a good map. Assume that $\omega \in [bb]$ is of the form
\[ \omega = (b, b, \omega_3, \omega_4, \ldots, \omega_n, g, \omega_{n+2}, \ldots),\]
with $n \ge 2$, $\omega_i \in \Sigma_B$ for $3 \le i \le n$, $g \in \Sigma_g$, and $x \in (a, \xi)$. Then $\kappa (\omega,x) \ge n+1$. Lemma \ref{lemma3.6} yields that
\begin{align}\label{eqn3.31}
|T_{\omega}^n(x)-c| \leq (\tilde M \gamma)^{\ell_{\omega_1} \cdots \ell_{\omega_n}} < 2^{-\ell_{\omega_1} \cdots \ell_{\omega_n}}.
\end{align}
From (G3) and \eqref{eqn3.31} we obtain that 
\begin{align}\label{eqn3.32}
|T_g T_{\omega}^n(x)-T_g(c)| = \left| \int_c^{T_{\omega}^n(x)} DT_g(y) \, dy \right| \le \frac{M_g}{r_g} |T_\omega^n(x)-c|^{r_g} < \frac{M_g}{r_g} \cdot 2^{-\ell_{\omega_1} \cdots \ell_{\omega_n} r_g}.
\end{align}
Set
\begin{align}
\zeta = \sup\{|DT_j(x)|\, :\,  j \in \Sigma,\,  x \in [0,1]\}.
\end{align}
Then $\zeta >1$ by (G4), (B4). Assume $\kappa(\omega,x) = m+n$ for some $m \ge 1$. Then $T_\omega^{m+n}(x) \in (a, \xi)$ so that by (G1),
\begin{align}
|T_{\omega}^{m+n}(x)-T_g(c)| \geq \min\{a,1-\xi\}.
\end{align}
Because of \eqref{eqn3.32} this implies
\begin{align}
\zeta^{m-1} \frac{M_g}{r_g} \cdot 2^{-\ell_{\omega_1} \cdots \ell_{\omega_n} r_g} \geq \min\{a,1-\xi\}.
\end{align}
Solving for $m$ yields
\begin{align}
m \geq K_1 + K_2 \ell_{\omega_1} \cdots \ell_{\omega_n}
\end{align}
for constants $K_1 = \big(1+\log \big( \frac{\min\{ a, 1-\xi \} r_g}{M_g} \big)\big) / \log \zeta \in \mathbb R$ and $K  _2 = \log(2^{r_g})/\log \zeta >0$. Note that $K_1,K_2$ are independent of $\omega, x, m$ and $n$.

\medskip

We obtain that for any $g \in \Sigma_G$,
\[ \begin{split}
\int_{[bb] \times (a, \xi)} \kappa \,  d\mathbb{P} \times \mu \ge \, & \sum_{n \in \mathbb{N}_{\ge 2}} \sum_{\omega_3,\ldots,\omega_n \in \Sigma_B} \mathbb{P}([bb \omega_3 \cdots \omega_n g]) \mu((a, \xi)) \Big(n+K_1 + K_2 \ell_b^2 \prod_{i=3}^n \ell_{\omega_i}\Big).
\end{split}\]
Since
\[  \sum_{n \in \mathbb{N}_{\ge 2}} \sum_{\omega_3,\ldots,\omega_n \in \Sigma_B}  \mathbb{P}([\omega_3 \cdots \omega_n]) \prod_{i=3}^n \ell_{\omega_i} = 1+\sum_{n \in \mathbb N} \theta^n = \infty,\]
we get $\int_{[bb] \times (a, \xi)} \kappa \,  d\mathbb{P} \times \mu = \infty$ and from Lemma~\ref{l:kac} we now conclude that $\mu$ is infinite.
\end{proof}

\subsection{The case $\theta < 1$} \label{sec3.3}
For the other direction of Theorem~\ref{MAIN2}, assume $\theta < 1$. We first obtain a stationary probability measure $\tilde{\mu}$ for $F$ as in \eqref{q:skewproduct} using a standard Krylov-Bogolyubov type argument. For this, let $\mathcal{M}$ denote the set of all finite Borel measures on $[0,1]$, and define the operator $\mathcal{P}: \mathcal{M} \rightarrow \mathcal{M}$ by
\begin{align}
\mathcal{P} \nu = \sum_{j \in \Sigma} p_j \nu \circ T_j^{-1}, \qquad \nu \in \mathcal{M},
\end{align}
where $\nu \circ T_j^{-1}$ denotes the pushforward measure of $\nu$ under $T_j$. Then $\mathcal{P}$ is a \emph{Markov-Feller} operator (see e.g.~\cite{Las04}) with dual operator $U$ on the space $B([0,1])$ of all bounded Borel measurable functions given by\footnote{By definition of a Markov-Feller operator, the space of bounded \emph{continuous} functions is required to be invariant under the dual operator $U$. If there is a $g \in \Sigma_G$ for which $T_g$ is discontinuous (namely at $c$), we then first identify $[0,1]$ with the unit circle $S^1$ so that $T_g$ can be viewed as a continuous map on $S^1$. With the same identification any acs measure on $S^1$ then gives an acs measure on $[0,1]$.} $U f = \sum_{j \in \Sigma} p_j f \circ T_j$ for $f \in B([0,1])$. As before, let $\lambda$ denote the Lebesgue measure on $[0,1]$, and set $\lambda_n = \mathcal{P}^n \lambda$ for each $n \geq 0$. Furthermore, for each $n \in \mathbb{N}$ define the Ces\'aro mean $\mu_n = \frac{1}{n} \sum_{k=0}^{n-1} \lambda_k$. Since the space of probability measures on $[0,1]$ equipped with the weak topology is sequentially compact, 
 there exists a subsequence $(\mu_{n_k})_{k \in \mathbb{N}}$ of $(\mu_n)_{n \in \mathbb{N}}$ that converges weakly to a probability measure $\tilde{\mu}$ on $[0,1]$. Using that a Markov-Feller operator is weakly continuous, it then follows from a standard argument that 
$\mathcal{P} \tilde{\mu} = \tilde{\mu}$, that is, $\tilde{\mu}$ is a stationary probability measure for $F$. The next theorem will lead to the estimate \eqref{eq:2.10} from Theorem~\ref{MAIN2}. For any $\mathbf b = b_1 \cdots b_k \in \Sigma_B^k$, $k \ge 0$, recall that we abbreviate $p_{\mathbf b} = \prod_{i=1}^kp_{b_i} $ and also let $\ell_{\mathbf b} = \prod_{i=1}^k \ell_{b_i}$ where we use $p_{\mathbf b}=1=\ell_{\mathbf b}$ in case $k=0$.

\begin{theorem}\label{thrm3.1}
There exists a constant $C > 0$ such that for all $n \in \mathbb{N}$ and all Borel sets $A \subseteq [0,1]$ we have
\begin{align}\label{eqn3.40}
\lambda_n(A) \leq C \cdot \sum_{g \in \Sigma_G} p_g \sum_{k=0}^{\infty} \sum_{\mathbf{b} \in \Sigma_B^k} p_{\mathbf{b}} \ell_{\mathbf{b}} \cdot \lambda(A)^{\ell_{\mathbf{b}}^{-1} r_g^{-1}}.
\end{align}
\end{theorem}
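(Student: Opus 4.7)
The plan is to expand the density $h_n = \mathcal{P}_F^n 1$ by decomposing each $\omega \in \Sigma^n$ according to its \emph{first good map}. Writing $\omega = \mathbf{b} g \tau$ with $\mathbf{b} \in \Sigma_B^k$ the initial bad run, $g \in \Sigma_G$ the first good map, and $\tau \in \Sigma^{n-k-1}$ arbitrary (and treating $\omega \in \Sigma_B^n$ separately), and using $\mathcal{P}_{T_\omega^n} = \mathcal{P}_{T_\tau^{n-k-1}} \mathcal{P}_{T_g \circ T_{\mathbf{b}}}$ together with $\sum_\tau p_\tau \mathcal{P}_{T_\tau^{n-k-1}} = \mathcal{P}_F^{n-k-1}$, I obtain
\[
h_n \;=\; \sum_{g \in \Sigma_G} \sum_{k=0}^{n-1} \sum_{\mathbf{b} \in \Sigma_B^k} p_g p_{\mathbf{b}}\, \mathcal{P}_F^{n-k-1} \phi_{g, \mathbf{b}} \;+\; \sum_{\mathbf{b} \in \Sigma_B^n} p_{\mathbf{b}}\, \mathcal{P}_{T_{\mathbf{b}}} 1,
\]
where $\phi_{g, \mathbf{b}} := \mathcal{P}_{T_g \circ T_{\mathbf{b}}} 1$. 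This reduces \eqref{eqn3.40} to the uniform-in-$m$ bound
\[
\int_A \mathcal{P}_F^m \phi_{g, \mathbf{b}}\, d\lambda \;\leq\; C\, \ell_{\mathbf{b}}\, \lambda(A)^{1/(\ell_{\mathbf{b}} r_g)},
\]
together with an analogous pure-bad bound.

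The first ingredient is a composite Nowicki--Van Strien estimate. Since (B1) yields $T_b^{-1}\{c\} = \{c\}$, the iterated bad map $T_{\mathbf{b}}$ has $c$ as its only critical point of order $\ell_{\mathbf{b}}$; composing with $T_g$ gives a two-branched surjection $T_g \circ T_{\mathbf{b}} : [0,1] \to [0,1]$ with unique critical point $c$ of combined order $\ell_{\mathbf{b}} r_g$ and critical value $T_g(c) \in \{0,1\}$ by (G1). Combining (G3), (B3), Lemma~\ref{lemma3.6}, and Koebe distortion (available since $T_g \circ T_{\mathbf{b}}$ has non-positive Schwarzian by \eqref{eqn5}) yields, uniformly in $g$ and $\mathbf{b}$,
\[
\int_E \phi_{g, \mathbf{b}}\, d\lambda \;=\; \lambda\bigl((T_g \circ T_{\mathbf{b}})^{-1} E\bigr) \;\leq\; C_1\, \lambda(E)^{1/(\ell_{\mathbf{b}} r_g)},
\]
where the potentially exponential constants arising from Lemma~\ref{lemma3.6} are absorbed by the matching $1/(\ell_{\mathbf{b}} r_g)$ exponent on $\lambda(E)$. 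The pure-bad estimate follows by the same argument with $r_g = 1$ and is absorbed into the target via $\lambda(A)^{1/\ell_{\mathbf{b}}} \leq \lambda(A)^{1/(\ell_{\mathbf{b}} r_g)}$ after inserting $\sum_g p_g$ as a multiplicative constant.

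The crux of the argument is to propagate this through the iterate $\mathcal{P}_F^m$. The density $\phi_{g, \mathbf{b}}$ has an integrable power-type singularity at $T_g(c) \in \{0, 1\}$, and since $0$ and $1$ are eventually-fixed repelling orbits for every $T_j \in \mathfrak{G} \cup \mathfrak{B}$ by (G1), (G4), (B1), (B4), one application of $\mathcal{P}_F$ preserves a pointwise bound of the form $f(x) \leq D\bigl(|x|^{-\gamma} + |1-x|^{-\gamma}\bigr)$ with the coefficient $D$ replaced by $\alpha_{\mathbf{b}} D + (\text{bounded})$, where $\gamma = 1 - 1/(\ell_{\mathbf{b}} r_g)$ and
\[
\alpha_{\mathbf{b}} \;=\; \sum_{j \in \Sigma} p_j\, \min\bigl(|DT_j(0)|,\, |DT_j(1)|\bigr)^{-1/(\ell_{\mathbf{b}} r_g)} \;<\; 1.
\]
A Taylor expansion gives $1 - \alpha_{\mathbf{b}} \sim c/(\ell_{\mathbf{b}} r_g)$ for some $c > 0$, so the geometric series $\sum_{m' \geq 0} \alpha_{\mathbf{b}}^{m'}$ has size of order $\ell_{\mathbf{b}}$: this is precisely where the $\ell_{\mathbf{b}}$ factor in \eqref{eqn3.40} arises. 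Iterating, the singular part of $\mathcal{P}_F^m \phi_{g, \mathbf{b}}$ decays geometrically while the accumulated bounded part has size $\lesssim \ell_{\mathbf{b}}$; integrating over $A$ yields the required bound and summing over $g, k, \mathbf{b}$ gives \eqref{eqn3.40}. The main obstacle lies in this propagation step --- in particular, in controlling the inner preimage branches of each good map (where $y \to c$ as $x \to T_g(c)$ and $|DT_g(y)|$ vanishes of order $r_g - 1$) and verifying that they produce strictly less singular contributions than the expanding outer branches, so that the bound of the stated form is indeed preserved under $\mathcal{P}_F$.
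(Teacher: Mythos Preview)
Your propagation step has a genuine gap: the claim that $\mathcal{P}_F$ preserves a pointwise bound of the form $f(x) \leq D\bigl(|x|^{-\gamma} + |1-x|^{-\gamma}\bigr)$ is false, and the obstacle you name (the inner branches of the good maps) is not the main one. You have overlooked the bad maps acting at $c$. Since $c$ is a superattracting fixed point of every $T_b$, applying $\mathcal{P}_{T_b}$ to any function bounded near $c$ creates a new singularity there of order $1-1/\ell_b$: for $x$ close to $c$ the unique preimage $y = T_b^{-1}(x)$ is close to $c$, where $|DT_b(y)| \sim |y-c|^{\ell_b-1} \sim |x-c|^{1-1/\ell_b} \to 0$. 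Thus after a single iterate $\mathcal{P}_F \phi_{g,\mathbf{b}}$ already has singularities at $0$, $1$ \emph{and} $c$, and your two-point ansatz breaks. Worse, a subsequent run of bad maps $\mathbf{b}'$ in $\tau$ builds up a singularity at $c$ of order $1-1/\ell_{\mathbf{b}'}$, which the next good map $T_{g'}$ then transports to $\{0,1\}$ with order $1-1/(\ell_{\mathbf{b}'}r_{g'})$; this can exceed your $\gamma = 1-1/(\ell_{\mathbf{b}}r_g)$ whenever $\ell_{\mathbf{b}'}r_{g'} > \ell_{\mathbf{b}}r_g$, so even the exponent is not stable under iteration. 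Tracking these additional singularities properly forces you to account for \emph{every} bad block in the word, not just the first one.

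This is exactly what the paper does, but organised differently. It first reduces (via the Minimum Principle and Koebe) to estimating $\lambda_n$ on the three small neighbourhoods $I_0(\varepsilon)$ and $I_c(\varepsilon)$, then decomposes each word $\mathbf u \in \Sigma^n$ according to its \emph{full} alternating bad/good block structure $\mathbf u = \mathbf b_1 \mathbf g_1 \cdots \mathbf b_{\tilde s} \mathbf g_{\tilde s}$ and tracks how the preimage of $I_0(\varepsilon)$ splits under each block: a shrinking piece that stays in $I_0$ (factor $t^{-1}$ per step) and, at each good symbol, a piece thrown into $I_c$. The $\ell_{\mathbf b}$ factor arises from the same geometric-series mechanism you identified, namely $\sum_{q\ge 0} t^{-q/(\ell_{\mathbf b} r_g)} \lesssim \ell_{\mathbf b}$, but here each bad block $\mathbf b_i$ contributes its own $(\mathbf b_i,g_{i,1})$ term to the final sum rather than being hidden inside an uncontrolled propagation of a single initial density.
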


Before we prove this theorem, we first show how it gives Theorem~\ref{MAIN2}.

\begin{proof}[Proof of Theorem~\ref{MAIN2}]
The first part of the statement follows from Proposition~\ref{prop3.2}. For the second part, assume that $\theta < 1$ and that Theorem~\ref{thrm3.1} holds. Let $A \subseteq [0,1]$. Using the regularity of $\lambda$, for any $ \delta >0$ there exists an open set $G \subseteq [0,1]$ such that $A \subseteq G$ and $\lambda(G) \leq \lambda(A) + \delta$. Using that $(\mu_{n_k})_{k \in \mathbb{N}}$ converges weakly to $\tilde{\mu}$, we obtain from the Portmanteau Theorem together with Theorem \ref{thrm3.1} that
\begin{align}
\tilde{\mu}(A) & \leq \tilde{\mu}(G) \leq \liminf_k \mu_{n_k}(G) \notag \\
& \leq C \cdot \sum_{g \in \Sigma_G} p_g \sum_{k=0}^{\infty} \sum_{\mathbf{b} \in \Sigma_B^k} p_{\mathbf{b}} \ell_{\mathbf{b}} \cdot (\lambda(A)+\delta)^{\ell_{\mathbf{b}}^{-1} r_g^{-1}}. \label{eqn3.41}
\end{align}

Since $\theta < 1$, the sum is bounded and with the Dominated Convergence Theorem we can take the limit as $\delta \to 0$ to obtain
\begin{align}\label{eqn3.42}
\tilde{\mu}(A) \leq C \cdot \sum_{g \in \Sigma_G} p_g \sum_{k=0}^{\infty} \sum_{\mathbf{b} \in \Sigma_B^k} p_{\mathbf{b}} \ell_{\mathbf{b}} \cdot \lambda(A)^{\ell_{\mathbf{b}}^{-1} r_g^{-1}}.
\end{align}
This proves that $\tilde{\mu}$ is absolutely continuous with respect to the Lebesgue measure on $[0,1]$. It follows that the probability measure $\tilde{\mu}$ is equal to the unique acs measure $\mu_{\mathbf p}$ from Theorem \ref{MAIN}. The estimate \eqref{eq:2.10} follows directly from \eqref{eqn3.42}.
\end{proof}

It remains to give the proof of Theorem \ref{thrm3.1}. We shall do this in a number of steps. 

\begin{prop}\label{prop0.6}
There exists a constant $K_1 > 0$ such that for all $n \in \mathbb{N}$, all $\mathbf u \in \Sigma^n$ and all Borel sets $A \subseteq [0,1]$ with $0 <  3\lambda(A) < \frac{1}{2} \min\{c,1-c\} $ 
we have 
\[  \lambda(T_{\mathbf u}^{-1}A) \leq K_1 \big( \lambda(T_{\mathbf u}^{-1}[0,3\eta)) + \lambda(T_{\mathbf u}^{-1}(c-3\eta,c+3\eta)) + \lambda(T_{\mathbf u}^{-1}(1-3\eta,1])\big),\]
where $\eta = \lambda(A)$.
\end{prop}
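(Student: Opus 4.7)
My plan is to split $A$ as $A = (A \cap V) \cup (A \setminus V)$, where $V := [0, 3\eta) \cup (c-3\eta, c+3\eta) \cup (1-3\eta, 1]$. The piece $A \cap V$ contributes $\lambda(T_{\mathbf u}^{-1}(A \cap V)) \leq \lambda(T_{\mathbf u}^{-1}(V))$ trivially, so the task reduces to bounding $\lambda(T_{\mathbf u}^{-1}(A'))$ for $A' := A \setminus V \subseteq [3\eta, c - 3\eta] \cup [c + 3\eta, 1 - 3\eta]$ by a uniform constant times $\lambda(T_{\mathbf u}^{-1}(V))$. This I would handle branch by branch and then sum.

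By (G1), (B1) and the composition structure, the image of any branch $I$ of $T_{\mathbf u}$ is one of $(0,1), (0,c), (c,1)$, and $T_{\mathbf u}|_I$ is a $C^3$ diffeomorphism onto its image with non-positive Schwarzian derivative by \eqref{eq4}. Splitting the image at $c$ when necessary, I work on each half, say $(0, c)$. Set $\phi(y) := |DT_{\mathbf u}(T_{\mathbf u}|_I^{-1}(y))|$, so that $\lambda(T_{\mathbf u}|_I^{-1}(E)) = \int_E \phi^{-1}\, dy$ for measurable $E$. The Minimum Principle applied to $|DT_{\mathbf u}|$ on $I$ forbids interior local minima, equivalently $\phi$ has no interior local minimum on $(0, c)$, so $\phi$ is either monotonic or unimodal with unique maximum $y^* \in (0, c)$. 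Moreover, since $T_j(\{0,1\}) \subseteq \{0,1\}$ for every $j$, the endpoint of $I$ mapping to $c$ is necessarily a critical point, forcing $\phi(c^-) = 0$.

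Applying the Minimum Principle to the sub-interval $T_{\mathbf u}|_I^{-1}([3\eta, c-3\eta])$ gives $\phi^{-1}(y) \leq M := \max(\phi^{-1}(3\eta), \phi^{-1}(c-3\eta))$ for $y \in [3\eta, c - 3\eta]$, so $\int_{A' \cap (0, c)} \phi^{-1}\, dy \leq \eta M$. I then claim $\eta M \leq \tfrac{1}{3}\bigl(\int_0^{3\eta} \phi^{-1} + \int_{c-3\eta}^c \phi^{-1}\bigr)$. Since $\eta < c/6$ one has $3\eta < c - 3\eta$, so either $y^* \geq 3\eta$ or $y^* \leq c-3\eta$ holds. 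If $y^* \geq 3\eta$, then $\phi$ is non-decreasing on $[0, 3\eta]$ so $\phi \leq \phi(3\eta)$ there, yielding $\int_0^{3\eta} \phi^{-1}\, dy \geq 3\eta\, \phi^{-1}(3\eta)$; if $y^* \leq c-3\eta$, then $\phi$ is non-increasing on $[c-3\eta, c]$ (using $\phi(c^-) = 0$), yielding $\int_{c-3\eta}^c \phi^{-1}\, dy \geq 3\eta\, \phi^{-1}(c-3\eta)$. The side providing the bound can always be matched to the endpoint attaining $M$: if $y^* > c - 3\eta$, then $\phi$ is non-decreasing on $[0, c-3\eta]$ so $\phi(3\eta) \leq \phi(c-3\eta)$, forcing $M = \phi^{-1}(3\eta)$ and triggering the first case; symmetrically for $y^* < 3\eta$.

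The main obstacle will be bookkeeping the case analysis cleanly, including the dichotomy "monotonic or unimodal" for the continuous $\phi$ (which follows from the absence of strict interior local minima), the careful identification of which endpoint of each half coincides with a critical point, and the symmetric argument on $(c,1)$. Summing the branch- and half-wise estimate $\int_{A' \cap T_{\mathbf u}(I)} \phi^{-1} \leq \tfrac{1}{3}\int_{V \cap T_{\mathbf u}(I)} \phi^{-1}$ over all branches $I$, and combining with the trivial bound for $A \cap V$, yields the proposition with uniform constant $K_1 = 4/3$.
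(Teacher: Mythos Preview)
Your argument is correct and in fact more elementary than the paper's. Both proofs work branch by branch and invoke the Minimum Principle to bound $\phi^{-1}$ on the central interval $[3\eta,c-3\eta]$ by its value at an endpoint. The difference lies in how the endpoint value of $\phi^{-1}$ is compared to a nearby integral: the paper appeals to the Koebe Principle on the subinterval $f^{-1}(2\eta,3\eta)$ (or its mirror) to get $|Df(x)|\le K^{(\rho)}|Df(f^{-1}(2\eta))|$ there, whereas you observe that the Minimum Principle already forces $\phi$ to be non-decreasing then non-increasing on the image, and use this monotonicity directly to obtain $\int_0^{3\eta}\phi^{-1}\ge 3\eta\,\phi^{-1}(3\eta)$ (or the symmetric bound). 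Your route avoids Koebe entirely and yields the explicit constant $K_1=4/3$, while the paper's constant comes from the Koebe distortion bound.

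One small correction: your assertion that $\phi(c^-)=0$ is not always true. It holds for branches whose image is $(0,c)$ or $(c,1)$ (there the endpoint mapping to $c$ must be preceded by a string of bad maps, which have vanishing derivative at $c$), but when you split a branch with image $(0,1)$ at $c$, the point mapping to $c$ is an interior point of the original branch, where the derivative is nonzero. Fortunately your case analysis never actually uses $\phi(c^-)=0$: the dichotomy ``$y^*\ge 3\eta$ or $y^*\le c-3\eta$'' together with the matching of $M$ to the correct side follows purely from the quasi-concavity of $\phi$ and the inequality $3\eta<c-3\eta$. So simply drop that parenthetical remark.
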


\begin{proof}
Let $n \in \mathbb{N}$, $\mathbf u \in \Sigma^n$ and a Borel set $A \subseteq [0,1]$ with $0 <  3\lambda(A) < \frac{1}{2} \min\{c,1-c\} < 1$ be given and write $\eta = \lambda(A)$. The map $T_{\mathbf u}$ has non-positive Schwarzian derivative on any of its intervals of monotonicity (see \eqref{eqn5}) and the image of any such interval is $[0,c], [c,1]$ or $[0,1]$. Set $A_1 = (\eta,c-\eta)$ and $A_2= (2\eta,c-2\eta)$. Let $I$ be a connected component of $T_{\mathbf u}^{-1} A_1$, and set $f = T_{\mathbf u}|_{I}$ and $I^* = f^{-1} A_2$. The Minimum Principle yields
\begin{align}\label{eq19}
|Df(x)| \geq \min_{z \in \partial I^*} |Df(z)|, \qquad \text{ for all $x \in I^*$.} 
\end{align}
Suppose the minimal value is attained at $f^{-1}(2\eta)$ and set $A_3 = (2\eta,3\eta)$ and  $J = f^{-1} A_3$. By the condition on the size of $A$ it follows from the Koebe Principle that
\begin{align}\label{eq20}
K^{(\eta)} |Df(f^{-1}(2\eta))| \geq |Df(x)|, \qquad \text{ for all $x \in J$.}
\end{align}
Combining \eqref{eq19} and \eqref{eq20} gives
\[ \begin{split}
\lambda (f^{-1}(A \cap A_2)) =\ & \int_{A \cap A_2} \frac{1}{|Df(f^{-1} y)|} \, d\lambda(y) \leq \lambda(A) \cdot \frac{1}{|Df(f^{-1}(2\eta))|} \\
\leq \ & K^{(\eta)} \int_{A_3} \frac{1}{|Df(f^{-1}y)|} \, d\lambda(y) = K^{(\eta)} \lambda(f^{-1}(A_3)).
\end{split}\]
We conclude that
\begin{align}\label{eq21}
\lambda \big(T_{\mathbf u}^{ -1}\big(A \cap (2\eta,c-2\eta)\big)\big) \leq K^{(\eta)} \lambda \big( T_{\mathbf u}^{-1}(2\eta,3\eta)\big).
\end{align}
In case $\min_{z \in \partial I^*} |Df(z)| = f^{-1}(c-2\eta)$, a similar reasoning yields
\begin{align}
\lambda \big( T_{\mathbf u}^{ -1}\big(A \cap (2\eta,c-2\eta)\big)\big)  \leq K^{(\eta)} \lambda \big( T_{\mathbf u}^{-1}(c-3\eta,c-2\eta)\big).
\end{align}
Furthermore, a similar reasoning can be done for the interval $[c,1]$ to conclude that
\[ \lambda \big( T_{\mathbf u}^{ -1}\big(A \cap (c+2\eta,1-2\eta)\big)\big) \leq  K^{(\eta)} \Big( \lambda \big( T_{\mathbf u}^{-1}(c+2\eta,c+3\eta) \big) + \lambda \big( T_{\mathbf u}^{-1}(1-3\eta,1-2\eta) \big) \Big). \]
Hence, setting $K_1 = \max\{K^{(\eta)},1\}$ gives the desired result. 
\end{proof}

Proposition~\ref{prop0.6} shows that to get the desired estimate from Theorem \ref{thrm3.1} it suffices to consider small intervals on the left and right of $[0,1]$ and around $c$, i.e., sets of the form
\[ I_c(\varepsilon):=(c-\varepsilon,c+\varepsilon) \quad  \text{and} \quad I_0(\varepsilon):=[0, \varepsilon ) \cup (1-\varepsilon,1]\]
for $\varepsilon >0$. We first focus on estimating the measure of the intervals $I_c(\varepsilon)$.
\begin{lemma}\label{lemma0.8}
There exists a constant $K_2 \geq 1$ such that for all $n \in \mathbb{N}$, $\mathbf{u} \in \Sigma^{n-1} \times \Sigma_G$ and all $\varepsilon > 0$ we have
\begin{align}
\lambda(T_{\mathbf u}^{-1} I_c(\varepsilon))\leq K_2 \varepsilon.
\end{align}
\end{lemma}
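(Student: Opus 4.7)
The plan is to exploit a structural feature of $T_{\mathbf u}$ created by the final map being good: every maximal monotonicity branch of $T_{\mathbf u}$ has full image $[0,1]$. Once that is in hand, $I_c(\varepsilon)$ automatically sits in the middle of the image with uniform Koebe space on both sides, and the required bound follows branch by branch.

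First, I would prove by induction on the word length the following intermediate claim: for every $\mathbf w \in \Sigma^\ast$, each maximal monotonicity interval of $T_{\mathbf w}$ has image in the finite list $\{[0,c],[c,1],[0,1]\}$. The base case $|\mathbf w|\le 1$ is immediate from (G1) (good maps send each half onto $[0,1]$) and (B1) (bad maps send each half onto $[0,c]$ or $[c,1]$). For the inductive step from $T_{\mathbf w'}$ to $T_{\mathbf w' j}=T_j\circ T_{\mathbf w'}$, a branch of $T_{\mathbf w'}$ with image $[0,c]$ or $[c,1]$ remains a single monotonicity branch of $T_{\mathbf w' j}$ with new image $T_j([0,c])$ or $T_j([c,1])$, still in the allowed list; a branch with image $[0,1]$ splits at the preimage of $c$ into two pieces of images $[0,c]$ and $[c,1]$, which are again sent by $T_j$ into the allowed list. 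Applying this claim to $\mathbf u = \mathbf v g$ and using that the good map $T_g$ sends both $[0,c]$ and $[c,1]$ onto $[0,1]$, every maximal monotonicity branch $J$ of $T_{\mathbf u}$ satisfies $T_{\mathbf u}(J)=[0,1]$.

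Next, fix $\varepsilon_0 := \tfrac{1}{3}\min\{c,1-c\}$ and work first in the regime $0<\varepsilon\le\varepsilon_0$. On each branch $J$, the subinterval $J^\ast := T_{\mathbf u}|_J^{-1}(I_c(\varepsilon))$ satisfies $T_{\mathbf u}(J^\ast)=I_c(\varepsilon)$, and the two components of $T_{\mathbf u}(J)\setminus T_{\mathbf u}(J^\ast)=[0,c-\varepsilon]\cup[c+\varepsilon,1]$ have lengths $c-\varepsilon$ and $1-c-\varepsilon$, each at least $2\varepsilon = \lambda(T_{\mathbf u}(J^\ast))$ by the choice of $\varepsilon_0$. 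Since $T_{\mathbf u}|_J$ has non-positive Schwarzian derivative (by (G2), (B2) and \eqref{eq4}), the Koebe Principle with $\rho=1$ applies to the pair $(J,J^\ast)$, and the consequence \eqref{q:intervals} yields
\begin{equation*}
\lambda(J^\ast) \le K^{(1)} \cdot \frac{\lambda(T_{\mathbf u}(J^\ast))}{\lambda(T_{\mathbf u}(J))} \cdot \lambda(J) = 2K^{(1)}\varepsilon \cdot \lambda(J).
\end{equation*}
Summing over the pairwise disjoint monotonicity branches $J$, whose total Lebesgue measure is at most $1$, gives $\lambda(T_{\mathbf u}^{-1}(I_c(\varepsilon))) \le 2K^{(1)}\varepsilon$ in the small-$\varepsilon$ regime.

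For $\varepsilon>\varepsilon_0$ the trivial bound $\lambda(T_{\mathbf u}^{-1}(I_c(\varepsilon)))\le 1 \le \varepsilon/\varepsilon_0$ takes over, so $K_2 := \max\{2K^{(1)},\,1/\varepsilon_0\}$ is the desired uniform constant. The only conceptually substantive step is the inductive bookkeeping of branch images in the first paragraph; once the equality $T_{\mathbf u}(J)=[0,1]$ is secured on every branch, the uniform Koebe space inside $[0,1]$ delivers the bound with a constant that does not see $n$ or $\mathbf u$.
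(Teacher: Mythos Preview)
Your overall strategy coincides with the paper's: each maximal monotonicity branch of $T_{\mathbf u}$ has image $[0,1]$ because the last letter is good, so $I_c(\varepsilon)$ sits in the middle with definite space on both sides; apply Koebe branch by branch and sum. Your inductive bookkeeping of branch images is correct and more explicit than the paper, which simply asserts this fact.

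There is, however, a misapplication of \eqref{q:intervals}. In the paper's derivation, \eqref{q:intervals} compares a sub-interval $J'\subseteq J$ with the \emph{inner} Koebe interval $J$, i.e., the interval on which the distortion bound \eqref{eq2.4} holds. In your two-interval setup the inner Koebe interval is $J^\ast$, so \eqref{q:intervals} only lets you compare sub-intervals of $J^\ast$ with $J^\ast$ itself; it does \emph{not} give the comparison of $J^\ast$ with the full branch $J$ that you wrote down. Indeed, distortion of $T_{\mathbf u}$ on $J$ is unbounded: the endpoints of a maximal monotonicity branch are critical points where $DT_{\mathbf u}=0$, so the Mean Value Theorem argument behind \eqref{q:intervals} cannot be run with $y\in J$.

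The paper repairs exactly this point by inserting an intermediate interval with an $\varepsilon$-independent image. For each branch, take $J_A=(T_{\mathbf u})^{-1}\big[c-\tfrac12\min\{c,1-c\},\,c+\tfrac12\min\{c,1-c\}\big]$ and $I_A=(T_{\mathbf u})^{-1}\big[c-\tfrac34\min\{c,1-c\},\,c+\tfrac34\min\{c,1-c\}\big]$. The pair $J_A\subseteq I_A$ has fixed Koebe space (namely $\rho=\tfrac14$), so distortion is bounded on $J_A$ by a constant not depending on $\varepsilon,n,\mathbf u$. Since $J^\ast\subseteq J_A$ for small $\varepsilon$, \eqref{q:intervals} now legitimately yields $\lambda(J^\ast)/\lambda(J_A)\le K^{(1/4)}\cdot 2\varepsilon/\min\{c,1-c\}$, and summing $\lambda(J_A)\le 1$ over branches finishes as you intended. (Your claimed inequality $\lambda(J^\ast)\le K\varepsilon\,\lambda(J)$ is in fact true via cross-ratio expansion for maps with $\mathbf S T\le 0$, but that is a different statement from \eqref{q:intervals}.)
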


\begin{proof} Let $n \in \mathbb{N}$ and $\mathbf u \in \Sigma^{n-1} \times \Sigma_G$. Let $\varepsilon > 0$. Suppose that $\varepsilon \ge \frac14 \min\{c,1-c\}$. Then
\begin{align}\label{eq3.53}
\lambda(T_{\mathbf u}^{-1}I_c(\varepsilon)) \leq 1 \leq \frac{4\varepsilon }{\min\{c,1-c\}}.
\end{align}
Now suppose $\varepsilon < \frac14 \min\{c,1-c\}$. Again the map $T_{\mathbf u}$ has non-positive Schwarzian derivative on the interior of any of its intervals of monotonicity and since $ u_n \in \Sigma_G$ the image of any such interval is $[0,1]$. 
Use $\mathcal I$ to denote the collection of connected components of $T_{\mathbf u}^{-1}I_c(\varepsilon)$. Let $A \in \mathcal I$ and write $J = J_A$ and $I = I_A$ for the intervals that satisfy $A \subseteq J$, $A \subseteq I$ and 
\[ \begin{split}
T_{\mathbf u}(J) =\ & \Big[c-\frac12 \min\{c,1-c\},c+\frac12  \min\{c,1-c\}\Big],\\
T_{\mathbf u}(I) =\ & \Big[c-\frac34  \min\{c,1-c\},c+\frac34  \min\{c,1-c\}\Big].
\end{split}\]
Also, write $f = T_{\mathbf u}|_I$. Since $f$ has non-positive Schwarzian derivative, it follows from 
 \eqref{q:intervals} that
\begin{align}
\frac{\lambda(A)}{\lambda(J)} \leq K^{(\frac14)} \frac{\lambda(f(A))}{\lambda(f(J))} = K^{(\frac14)} \frac{2\varepsilon}{\min\{c,1-c\}}.
\end{align}
We conclude that
\begin{align}\label{eq3.57a}
\lambda(T_{\mathbf u}^{-1} I_c(\varepsilon)) = \sum_{A \in \mathcal I} \lambda(A) \leq K^{(\frac14)} \frac{2\varepsilon}{\min\{c,1-c\}} \sum_{A \in \mathcal I}\lambda(J_A) \leq K^{(\frac14)} \frac{2\varepsilon}{\min\{c,1-c\}}.
\end{align}
Defining $K_2 = \frac{2\max\{2,K^{(\frac14)}\}}{\min\{c,1-c\}} $, the desired result now follows from \eqref{eq3.53} and \eqref{eq3.57a}.
\end{proof}

To find $\lambda_n\big(I_c(\varepsilon)\big)$, first note that from Lemma \ref{lemma3.6} it follows that for all $\varepsilon >0$, $n \in \mathbb N$, $\mathbf u \in \Sigma_B^n$,
\begin{equation}\label{eq3.57}
T_{\mathbf u}^{-1} \big( I_c(\varepsilon) \big) \subseteq I_c \big( \tilde K^{-1}\varepsilon^{\ell_{\mathbf u}^{-1}} \big).
\end{equation}
By splitting $\Sigma^n$ according to the final block of bad indices, we can then write using \eqref{eq3.57} and Lemma~\ref{lemma0.8} that
\[ \begin{split}
\lambda_n\big(I_c(\varepsilon)\big)  =\ & \sum_{k=0}^{n-1} \sum_{\mathbf{v} \in \Sigma^{n-k-1}} \sum_{g \in \Sigma_G} \sum_{\mathbf{b} \in \Sigma_B^k} p_{\mathbf{v} g \mathbf{b}} \lambda\big(T^{-1}_{\mathbf{v}g\mathbf{b}} I_c(\varepsilon)\big) + \sum_{\mathbf{b} \in \Sigma_B^n} p_{\mathbf{b}} \lambda\big(T_{\mathbf{b}}^{-1} I_c(\varepsilon)\big)\\
\le \ & \sum_{k=0}^{n-1} \sum_{\mathbf{v} \in \Sigma^{n-k-1}} \sum_{g \in \Sigma_G} \sum_{\mathbf{b} \in \Sigma_B^k} p_{\mathbf{v} g \mathbf{b}} \lambda \big( T_{\mathbf v g}^{-1} I_c (\tilde K^{-1} \varepsilon^{\ell_{\mathbf b}^{-1}}) \big) + \sum_{\mathbf{b} \in \Sigma_B^n} p_{\mathbf{b}} \lambda\big( I_c( \tilde K^{-1} \varepsilon^{\ell_{\mathbf b}^{-1}})\big)\\
\le \ & \sum_{k=0}^{n-1} \sum_{g \in \Sigma_G} \sum_{\mathbf{b} \in \Sigma_B^k} p_g p_{\mathbf{b}} K_2 \tilde K^{-1} \varepsilon^{\ell_{\mathbf b}^{-1}} + \sum_{\mathbf{b} \in \Sigma_B^n} p_{\mathbf b} 2\tilde K^{-1} \varepsilon^{\ell_{\mathbf{b}}^{-1}}.
\end{split}\]
Taking $K_3 = \max \big\{ K_2, 2 \big( \sum_{g \in \Sigma_G} p_g \big)^{-1} \big\} \cdot \tilde K^{-1} \ge 1$ then gives
\begin{equation}\label{q:measureIm}
\lambda_n \big(I_c(\varepsilon)\big)  \le K_3 \sum_{g \in \Sigma_G} \sum_{k=0}^n \sum_{\mathbf b \in \Sigma_B^k} p_g p_{\mathbf b} \varepsilon^{\ell_{\mathbf b}^{-1}}.
\end{equation}

\medskip
We now focus on $I_0(\varepsilon) = [0, \varepsilon) \cup (1-\varepsilon,1]$. Fix an $ 0 < \varepsilon_0 < \frac12 \min\{ c, 1-c\}$ and a $t > 1$ that satisfy
\begin{align}\label{eq31}
|DT_j(x)| > t, \qquad \text{ for all $x \in I_0(\varepsilon_0)$ and each $j \in \Sigma$}.
\end{align}
Such $\varepsilon_0$ and $t$ exist because of (G4) and (B4). From (G3) it follows that for each $0 < \varepsilon < \varepsilon_0$ and $g \in \Sigma_G$,
\[ |T_g(x)-T_g(c)| = \left| \int_c^x DT_g (y) dy \right| \ge \frac{K_g}{r_g} \cdot |x-c|^{r_g}.\]
Set $K_4 = \max\{(K_g^{-1} r_g)^{r_g^{-1}}: g \in \Sigma_G\} \geq 1$. Then (G1) implies that
\begin{equation}\label{eq3.60}
T_g^{-1} I_0(\varepsilon) \subseteq  I_0(\varepsilon t^{-1}) \cup I_c (K_4 \varepsilon^{r_g^{-1}} ).
\end{equation}
Furthermore, from (B1) it follows that for each $\varepsilon \in (0,\varepsilon_0)$ and $b \in \Sigma_B$,
\begin{align}\label{eq3.61}
T_b^{-1} I_0(\varepsilon) \subseteq I_0(\varepsilon t^{-1}).
\end{align}
Write each $\mathbf{u} \in \Sigma^n$ as
\begin{align}\label{eq:4.30}
\mathbf{u} = \mathbf{b}_1 \mathbf{g}_1 \cdots \mathbf{b}_{\tilde{s}} \mathbf{g}_{\tilde{s}}
\end{align}
for some $\tilde{s} \in \{1,\ldots,n\}$, where for each $i$ we have $\mathbf{b}_i = b_{i,1} \cdots b_{i,k_i}\in \Sigma_B^{k_i}$ and $\mathbf{g}_i = g_{i,1} \cdots g_{i,m_i} \in \Sigma_G^{m_i}$ for some $k_1,m_{\tilde{s}} \in \mathbb{Z}_{\geq 0}$ and $k_2,\ldots,k_{\tilde{s}}, m_1,\ldots,m_{\tilde{s}-1} \in \mathbb{N}$. Define 
\[ s= \begin{cases}
\tilde s, & \text{ if } m_{\tilde{s}} \geq 1,\\
\tilde{s} -1, & \text{ if } m_{\tilde{s}} = 0 .
\end{cases}\]
Moreover, we introduce notation to indicate the length of the tails of the block $\mathbf u$:
\[ \begin{array}{ll}
d_i = |\mathbf{b}_i \mathbf{g}_i \cdots \mathbf{b}_{\tilde{s}} \mathbf{g}_{\tilde{s}}|, & i \in \{1,\ldots,\tilde{s}\}, \\
q_{i,j} = |g_{i,j+1} \cdots g_{i,m_i} \mathbf{b}_{i+1} \mathbf{g}_{i+1} \cdots \mathbf{b}_{\tilde{s}} \mathbf{g}_{\tilde{s}}|,  & i \in \{1,\ldots,\tilde{s}\}, \, j \in \{0,\ldots,m_i\}.
\end{array}\]
If necessary to avoid confusion, we write $s(\mathbf{u})$, $k_i(\mathbf{u})$, etcetera to emphasize the dependence on $\mathbf{u}$.

\begin{lemma}\label{l:c5}
There exists a constant $K_5 > 0$ such that for each $ 0 < \varepsilon < \varepsilon_0$, $n \in \mathbb{N}$ and $\mathbf{u} = \mathbf{b}_1 \mathbf{g}_1 \cdots \mathbf{b}_{\tilde{s}} \mathbf{g}_{\tilde{s}} \in \Sigma^n$,
\[\begin{split}
T_{\mathbf{u}}^{-1} I_0(\varepsilon) \subseteq \ & I_0(\varepsilon t^{-d_1}) \cup \bigcup_{i=1}^{s} T^{-1}_{\mathbf{b}_1 \mathbf{g}_1 \cdots \mathbf{b}_{i-1} \mathbf{g}_{i-1}} I_c\Big(K_5(\varepsilon t^{-q_{i,1}})^{\ell_{\mathbf{b}_i}^{-1} r^{-1}_{g_{i,1}}}\Big) \notag \\
& \cup \bigcup_{i=1}^s \bigcup_{j=2}^{m_i} T_{\mathbf{b}_1 \mathbf{g}_1 \cdots \mathbf{b}_{i-1} \mathbf{g}_{i-1}\mathbf{b}_i g_{i,1} \cdots g_{i,j-1}}^{-1}I_c(K_5 (\varepsilon t^{-q_{i,j}})^{r_{g_{i,j}}^{-1}}).
\end{split}\]
\end{lemma}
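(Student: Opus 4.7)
The plan is to iterate the one-step pullback estimates \eqref{eq3.60}, \eqref{eq3.61} and \eqref{eq3.57} from right to left along the word $\mathbf u$, maintaining an $I_0$-type \emph{core} that shrinks by a factor $t^{-1}$ at every step and splitting off an $I_c$-type \emph{tail} whenever a good letter is crossed. Since $\varepsilon < \varepsilon_0$ and the core radius only decreases, the standing hypothesis $\alpha < \varepsilon_0$ in \eqref{eq3.60} and \eqref{eq3.61} remains valid throughout the iteration.

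Concretely, I would prove by induction on $k \in \{0, 1, \ldots, n\}$ that after pulling $I_0(\varepsilon)$ back through $T_{u_n}^{-1}, T_{u_{n-1}}^{-1}, \ldots, T_{u_{n-k+1}}^{-1}$ in succession, the resulting set is contained in the union of a core $I_0(\varepsilon t^{-k})$ with the preimage under $T_{u_1 \cdots u_{n-k}}$ of finitely many $I_c$-pieces, one for each good letter already processed. When the $r$-th pullback is through a good letter $g_{i,j}$ (so $r-1 = q_{i,j}$ and the incoming core radius is $\varepsilon t^{-q_{i,j}}$), rule \eqref{eq3.60} creates a fresh tail $I_c\bigl(K_4 (\varepsilon t^{-q_{i,j}})^{r_{g_{i,j}}^{-1}}\bigr)$ sitting immediately after the still-unprocessed prefix of $\mathbf u$. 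When the letter is bad, \eqref{eq3.61} shrinks the core and produces no new tail. After all $n$ pullbacks the surviving core is $I_0(\varepsilon t^{-n}) = I_0(\varepsilon t^{-d_1})$, which is the first set in the target union.

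What remains is to record the attachment point of each tail, i.e., the unprocessed prefix at the moment the good letter is crossed. For $g_{i,j}$ with $j \ge 2$ this prefix is precisely $\mathbf b_1 \mathbf g_1 \cdots \mathbf b_i g_{i,1} \cdots g_{i,j-1}$, and taking $K_5 \ge K_4$ already yields the claimed form. For the first good letter $g_{i,1}$ of a block the prefix is $\mathbf b_1 \mathbf g_1 \cdots \mathbf g_{i-1} \mathbf b_i$, which ends with the bad block $\mathbf b_i$. To move $\mathbf b_i$ out of the pullback word (this is what makes it possible to apply Lemma~\ref{lemma0.8} later) I would absorb it into the radius via \eqref{eq3.57}:
\[ T_{\mathbf b_i}^{-1} I_c\bigl(K_4 (\varepsilon t^{-q_{i,1}})^{r_{g_{i,1}}^{-1}}\bigr) \subseteq I_c\bigl(\tilde K^{-1} K_4^{\ell_{\mathbf b_i}^{-1}} (\varepsilon t^{-q_{i,1}})^{\ell_{\mathbf b_i}^{-1} r_{g_{i,1}}^{-1}}\bigr).\]
Since $K_4 \ge 1$ and $\ell_{\mathbf b_i}^{-1} \le 1$, one has $K_4^{\ell_{\mathbf b_i}^{-1}} \le K_4$, so choosing $K_5 = \max\{K_4, \tilde K^{-1} K_4\}$ yields a common constant in both families.

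There is no deep analytic estimate behind this lemma; the main obstacle is purely organisational, namely keeping the bookkeeping consistent so that the indexing of $d_1$ and $q_{i,j}$ matches the lemma's conventions, and verifying that the dichotomy $j = 1$ versus $j \ge 2$ is exactly what distinguishes whether the preceding bad block $\mathbf b_i$ is still unprocessed at the moment $g_{i,j}$ is crossed. Once the bookkeeping is set up, the inclusion follows by a direct iteration of \eqref{eq3.60}, \eqref{eq3.61} and \eqref{eq3.57}.
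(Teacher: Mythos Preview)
Your proposal is correct and follows essentially the same route as the paper: both arguments iterate the one-step inclusions \eqref{eq3.60}, \eqref{eq3.61} and \eqref{eq3.57} from right to left, the only difference being that the paper organises the induction block-by-block (on $\tilde s$) while you do it letter-by-letter, and your constant $K_5=\max\{K_4,\tilde K^{-1}K_4\}=\tilde K^{-1}K_4$ coincides with the paper's. One small wording slip: in your inductive invariant the $I_c$-tails are not all preimages under the same $T_{u_1\cdots u_{n-k}}$ but each under its own attachment prefix, as you in fact use correctly later.
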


\begin{proof}
We prove the statement by an induction argument for $\tilde{s}$. Let $\mathbf{u}$ be a word with symbols in $\Sigma$, and write $\mathbf{u} = \mathbf{b}_1 \mathbf{g}_1 \cdots \mathbf{b}_{\tilde{s}} \mathbf{g}_{\tilde{s}}$ for its decomposition as in \eqref{eq:4.30}. First suppose that $\tilde{s} = 1$. If $m_1 = 0$, then the statement immediately follows from repeated application of \eqref{eq3.61}. If $m_1 \geq 1$, then repeated application of \eqref{eq3.60} gives
\begin{equation}
T_{\mathbf{g}_1}^{-1} I_0(\varepsilon)  \subseteq I_0 (\varepsilon t^{-q_{1,0}}) \cup I_c\Big(K_4(\varepsilon t^{-q_{1,1}})^{r^{-1}_{g_{1,1}}}\Big) \cup \bigcup_{j=2}^{m_1} T_{g_{1,1} \cdots g_{1,j-1}}^{-1} I_c\Big(K_4(\varepsilon t^{-q_{1,j}})^{r^{-1}_{g_{1,j}}}\Big).
\end{equation}
By setting $K_5 = \tilde K^{-1} K_4$, applying \eqref{eq3.57} and \eqref{eq3.61} then yields
\[ T_{\mathbf{b}_1 \mathbf{g}_1}^{-1} I_0 (\varepsilon) \subseteq I_0 (\varepsilon t^{-d_1}) \cup I_c\Big(K_5(\varepsilon t^{-q_{1,1}})^{\ell_{\mathbf{b}_1}^{-1} r^{-1}_{g_{1,1}}}\Big) \cup \bigcup_{j=2}^{m_1} T_{\mathbf{b}_1 g_{1,1} \cdots g_{1,j-1}}^{-1} I_c\Big(K_5(\varepsilon t^{-q_{1,j}})^{r^{-1}_{g_{1,j}}}\Big).\]
Note that this is true for the case that $k_1 = 0$ as well. This proves the statement if $\tilde{s} = 1$. Now suppose $\tilde{s}(\mathbf{u}) > 1$ and suppose that the statement holds for all words $\mathbf{v}$ with $\tilde{s}(\mathbf{v}) = \tilde{s}(\mathbf{u})-1$. In particular, the statement then holds for the word $\mathbf{b}_2 \mathbf{g}_2 \cdots \mathbf{b}_{\tilde{s}} \mathbf{g}_{\tilde{s}}$. Note that $m_1 \geq 1$. Again, by repeated application of \eqref{eq3.60} it follows that
\begin{equation}\label{eq37}
T_{\mathbf{g}_1}^{-1} I_0(\varepsilon t^{-d_2})  \subseteq I_0 (\varepsilon t^{-q_{1,0}}) \cup I_c\Big(K_4(\varepsilon t^{-q_{1,1}})^{r^{-1}_{g_{1,1}}}\Big) \cup \bigcup_{j=2}^{m_1} T_{g_{1,1} \cdots g_{1,j-1}}^{-1} I_c\Big(K_4(\varepsilon t^{-q_{1,j}})^{r^{-1}_{g_{1,j}}}\Big).
\end{equation}
Furthermore, applying \eqref{eq3.57} and \eqref{eq3.61} then yields
\[ T_{\mathbf{b}_1 \mathbf{g}_1}^{-1} I_0 (\varepsilon t^{-d_2}) \subseteq I_0 (\varepsilon t^{-d_1}) \cup I_c\Big(K_5(\varepsilon t^{-q_{1,1}})^{\ell_{\mathbf{b}_1}^{-1} r^{-1}_{g_{1,1}}}\Big) \cup \bigcup_{j=2}^{m_1} T_{\mathbf{b}_1 g_{1,1} \cdots g_{1,j-1}}^{-1} I_c\Big(K_5(\varepsilon t^{-q_{1,j}})^{r^{-1}_{g_{1,j}}}\Big).\]
This together with the statement being true for the word $\mathbf{b}_2 \mathbf{g}_2 \cdots \mathbf{b}_{\tilde{s}} \mathbf{g}_{\tilde{s}}$ yields the statement for $\mathbf{u}$.
\end{proof}

Combining Lemma~\ref{lemma0.8} and Lemma~\ref{l:c5} gives
\[\lambda(T_{\mathbf u}^{-1}I_0(\varepsilon)) \le 2\varepsilon t^{-d_1} + \sum_{i=1}^s K_2 K_5 (\varepsilon t^{-q_{i,1}})^{\ell_{\mathbf{b}_i}^{-1} r^{-1}_{g_{i,1}}} + \sum_{i=1}^s \sum_{j=2}^{m_i} K_2 K_5 (\varepsilon t^{-q_{i,j}})^{r_{g_{i,j}}^{-1}}.\]
Let $r_{\max} = \max \{ r_g \, : \, g \in \Sigma_G\}$ and set $\alpha := t^{1/r_{\max}}>1$. Then
\[ \sum_{i=1}^s \sum_{j=2}^{m_i} \alpha^{-q_{i,j}} \leq \sum_{\ell=0}^{\infty} \alpha^{-\ell} =  \frac{1}{1-1/\alpha}, \]
so that
\begin{equation}\label{q:c2c5} \begin{split}
\lambda(T_{\mathbf u}^{-1}I_0(\varepsilon)) \le \ & 2 \varepsilon^{1/r_{\max}} + K_2 K_5 \sum_{i=1}^s \sum_{j=2}^{m_i} \varepsilon^{1/r_{\max}} \alpha^{-q_{i,j}} + \sum_{i=1}^s K_2 K_5 (\varepsilon t^{-q_{i,1}})^{\ell_{\mathbf{b}_i}^{-1} r^{-1}_{g_{i,1}}} \\
\le \ &  \bigg( 2 + \frac{K_2K_5}{1-1/\alpha} \bigg) \varepsilon^{1/r_{\max}} + K_2 K_5\sum_{i=1}^s (\varepsilon t^{-q_{i,1}})^{\ell_{\mathbf{b}_i}^{-1} r^{-1}_{g_{i,1}}}.
\end{split}
\end{equation}

\begin{prop}\label{prop15}
There exists a constant $K_6 > 0$ such that for each $\varepsilon \in (0,\varepsilon_0)$ and $n \in \mathbb{N}$,
\begin{align*}
\lambda_n (I_0(\varepsilon)) \leq K_6 \sum_{g \in \Sigma_G} p_g  \sum_{k=0}^{n-1} \sum_{\mathbf{b} \in \Sigma_B^k} p_{\mathbf{b}} \ell_{\mathbf{b}} \cdot \varepsilon^{\ell_{\mathbf{b}}^{-1}r_g^{-1}}.
\end{align*}
\end{prop}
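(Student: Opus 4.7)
The plan is to start from
\[ \lambda_n(I_0(\varepsilon)) = \sum_{\mathbf{u} \in \Sigma^n} p_{\mathbf{u}} \, \lambda(T_{\mathbf{u}}^{-1} I_0(\varepsilon)) \]
and apply the word-by-word estimate \eqref{q:c2c5} to each summand. This splits the bound into two contributions: a uniform term $C_1 \varepsilon^{1/r_{\max}}$ with $C_1 = 2 + K_2 K_5/(1-1/\alpha)$, and a structure-dependent term $K_2 K_5 \sum_{i=1}^{s(\mathbf{u})} (\varepsilon t^{-q_{i,1}})^{\ell_{\mathbf{b}_i}^{-1} r_{g_{i,1}}^{-1}}$ indexed by the good blocks appearing in the decomposition \eqref{eq:4.30} of $\mathbf{u}$.

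The uniform part sums trivially: since $\sum_{\mathbf{u}} p_{\mathbf{u}} = 1$, its total contribution is $C_1 \varepsilon^{1/r_{\max}}$. This will be absorbed into the $k=0$ summand of the target (where $\mathbf{b}$ is the empty word, so $p_{\mathbf{b}} = \ell_{\mathbf{b}} = 1$), namely $\sum_{g \in \Sigma_G} p_g \varepsilon^{1/r_g}$, using that $\varepsilon < 1$ and $r_g \leq r_{\max}$ force $\varepsilon^{1/r_g} \geq \varepsilon^{1/r_{\max}}$.

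The heart of the argument is handling the structure-dependent term by swapping the order of summation. The key observation is that each pair $(\mathbf{u}, i)$ with $i \in \{1, \ldots, s(\mathbf{u})\}$ corresponds bijectively to a factorization $\mathbf{u} = \mathbf{v} \mathbf{b} g \mathbf{w}$ in which $g = g_{i,1} \in \Sigma_G$ is the symbol starting the $i$-th good block, $\mathbf{b} = \mathbf{b}_i \in \Sigma_B^k$ is the maximal preceding bad block (possibly empty), $\mathbf{v}$ is the prefix (either empty or ending in a good symbol, so that $\mathbf{b}$ is genuinely maximal), and $\mathbf{w} \in \Sigma^m$ is the suffix of length $m = q_{i,1}$. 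Using $p_{\mathbf{u}} = p_{\mathbf{v}} p_{\mathbf{b}} p_g p_{\mathbf{w}}$ together with the crude bounds $\sum_{\mathbf{v}} p_{\mathbf{v}} \leq 1$ and $\sum_{\mathbf{w} \in \Sigma^m} p_{\mathbf{w}} = 1$, and then extending the sum on $m$ to infinity, reduces the task to bounding
\[ \sum_{k=0}^{n-1} \sum_{g \in \Sigma_G} \sum_{\mathbf{b} \in \Sigma_B^k} p_g p_{\mathbf{b}} \, \varepsilon^{\ell_{\mathbf{b}}^{-1} r_g^{-1}} \sum_{m=0}^{\infty} t^{-m \ell_{\mathbf{b}}^{-1} r_g^{-1}}. \]

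The principal obstacle is the geometric factor $(1 - t^{-\ell_{\mathbf{b}}^{-1} r_g^{-1}})^{-1}$, which blows up as $\ell_{\mathbf{b}} \to \infty$; fortunately, this blow-up is exactly what supplies the factor $\ell_{\mathbf{b}}$ in the target estimate. Setting $x = \ell_{\mathbf{b}}^{-1} r_g^{-1} \in (0,1]$ and invoking concavity of $y \mapsto 1 - t^{-y}$ on $[0,1]$ yields the chord estimate $1 - t^{-x} \geq (1 - 1/t)x$, and hence $(1 - t^{-x})^{-1} \leq \frac{t}{t-1} \ell_{\mathbf{b}} r_g \leq \frac{t \, r_{\max}}{t-1} \ell_{\mathbf{b}}$. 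Plugging this back and combining all accumulated constants produces the required $K_6$.
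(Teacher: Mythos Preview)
Your proof is correct and follows essentially the same strategy as the paper: apply the word estimate \eqref{q:c2c5}, reorganize the double sum over $(\mathbf u,i)$ according to the block $\mathbf b_i$, the symbol $g_{i,1}$ and the tail length $q_{i,1}$, bound the prefix weight by $1$, and then control the resulting geometric series in $m$ so as to produce the factor $\ell_{\mathbf b}$. The only cosmetic differences are that the paper separates the cases $i=1$ and $i\ge 2$ explicitly (whereas your single factorization $\mathbf u=\mathbf v\mathbf b g\mathbf w$ handles both at once), and that the paper extracts $\ell_{\mathbf b}$ via the monotonicity of $x\mapsto x/(\alpha^{x}-1)$ rather than your chord bound $1-t^{-x}\ge (1-1/t)x$; both yield the same conclusion.
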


\begin{proof}
Let $n \in \mathbb{N}$. Then with \eqref{q:c2c5} we obtain
\begin{align}
\lambda_n (I_0(\varepsilon)) &= \sum_{\mathbf{u} \in \Sigma^n} p_{\mathbf{u}} \lambda\big(T_{\mathbf{u}}^{-1} (I_0(\varepsilon))\big) \notag \\
& \leq  \bigg( 2 + \frac{K_2K_5}{1-1/\alpha} \bigg) \varepsilon^{1/r_{\max}} + K_2 K_5 \sum_{\mathbf{u} \in \Sigma^n} p_{\mathbf{u}} \sum_{i=1}^{s(\mathbf{u})} \big(\varepsilon t^{-q_{i,1}(\mathbf{u})}\big)^{\ell^{-1}_{\mathbf{b}_i(\mathbf{u})}r^{-1}_{g_{i,1}(\mathbf{u})}} \notag \\
& = \bigg( 2 + \frac{K_2K_5}{1-1/\alpha} \bigg) \varepsilon^{1/r_{\max}} + K_2 K_5  \sum_{i=1}^{\tau} \sum_{\mathbf{u} \in \Sigma^n} 1_{\{1,\ldots,s(\mathbf{u})\}}(i) p_{\mathbf{u}} \big(\varepsilon t^{-q_{i,1}(\mathbf{u})}\big)^{\ell^{-1}_{\mathbf{b}_i(\mathbf{u})}r^{-1}_{g_{i,1}(\mathbf{u})}}, \label{eqnr1}
\end{align}
where we defined $\tau = \floor{\frac{n+1}{2}}$ which is the largest value $s(\mathbf u)$ can take.
Let us consider the second term in \eqref{eqnr1}. First of all, note that a word $\mathbf{u} \in \Sigma^n$ satisfies $s(\mathbf{u}) \geq 1$ if and only if $m_1(\mathbf{u}) \geq 1$. Therefore,
\begin{align*}
\{\mathbf{u} \in \Sigma^n: s(\mathbf{u}) \geq 1\} &= \bigcup_{k=0}^{n-1} \Sigma_B^k \times \Sigma_G \times \Sigma^{n-k-1}.
\end{align*}
Hence, defining the function $\chi$ on $\{0,\ldots,n-1\}^2$ by
\begin{align}\label{defchi}
\chi(k,q) = \sum_{\mathbf{b} \in \Sigma_B^k} \sum_{g \in \Sigma_G} p_{\mathbf{b}} p_g \big(\varepsilon t^{-q}\big)^{\ell_{\mathbf{b}}^{-1} r_g^{-1}}, \qquad (k,q) \in \{0,\ldots,n-1\}^2.
\end{align}
we can rewrite and bound the term with $i=1$ in \eqref{eqnr1} as follows:
\begin{align}
 \sum_{\mathbf{u} \in \Sigma^n} 1_{\{1,\ldots,s(\mathbf{u})\}}(1) p_{\mathbf{u}} \big(\varepsilon t^{-q_{1,1}(\mathbf{u})}\big)^{\ell^{-1}_{\mathbf{b}_i(\mathbf{u})}r^{-1}_{g_{1,1}(\mathbf{u})}} &= \sum_{k=0}^{n-1} \sum_{\mathbf{v} \in \Sigma^{n-k-1}} p_{\mathbf{v}} \chi(k,n-k-1) \notag \\
&\leq \varepsilon^{1/r_{\max}} + \sum_{k=1}^{n-1} \chi(k,n-k-1). \label{eqnr2}
\end{align}
Secondly, note that for each $i \in \{2,\ldots,\tau\}$ a word $\mathbf u \in \Sigma^n$ satisfies $s(\mathbf{u}) \geq i$ if and only if $m_{i-1}(\mathbf{u}),k_i(\mathbf{u}),m_i(\mathbf{u}) \geq 1$. For each $k \in \{1,\ldots,n-1\}$ and $q \in \{0,\ldots,n-k-2\}$ and $i \in \{2,\ldots,\tau\}$ we define
\begin{align}
A_{i,k,q} = \{ \mathbf{v} \in \Sigma^{n-k-q-1}: \tilde{s}(\mathbf{v}) = i-1, v_{n-k-q-1} \in \Sigma_G\}.
\end{align}
The set $A_{i,k,q}$ contains all words of length $n-k-q-1$ that can precede the word $\mathbf b_i \mathbf{g}_i \cdots \mathbf b_{\tilde{s}} \mathbf g_{\tilde{s}}$ with $|\mathbf{b}_i| = k$ and $|g_{i,2} \cdots g_{i,m_i} \mathbf{b}_{i+1} \mathbf g_{i+1} \cdots \mathbf b_{\tilde{s}} \mathbf{g}_{\tilde{s}}| = q$. So
\begin{align*}
\{\mathbf{u} \in \Sigma^n: s(\mathbf{u}) \geq i\} &= \bigcup_{k=1}^{n-1} \bigcup_{q=0}^{n-k-2} A_{i,k,q} \times \Sigma_B^k \times \Sigma_G \times \Sigma^q, \qquad i \in \{2, \ldots, \tau\}.
\end{align*}
Hence, using \eqref{defchi} we can rewrite and bound the sum in \eqref{eqnr1} that runs from $i=2$ to $\tau$ as follows:
\begin{align}
\sum_{i=2}^{\tau} \sum_{\mathbf{u} \in \Sigma^n} 1_{\{1,\ldots,s(\mathbf{u})\}}(i) p_{\mathbf{u}} \big(\varepsilon t^{-q_{i,1}(\mathbf{u})}\big)^{\ell^{-1}_{\mathbf{b}_i(\mathbf{u})}r^{-1}_{g_{i,1}(\mathbf{u})}} =\ & \sum_{i=2}^{\tau} \sum_{k=1}^{n-1} \sum_{q=0}^{n-k-2} \sum_{\mathbf{v}_1 \in A_{i,k,q}} \sum_{\mathbf{v}_2 \in \Sigma^{q-1}} p_{\mathbf{v}_1} p_{\mathbf{v}_2} \chi(k,q) \notag \\
=\ & \sum_{k=1}^{n-1}\sum_{q=0}^{n-k-2} \chi(k,q) \sum_{i=2}^{\tau} \sum_{\mathbf{v}_1 \in A_{i,k,q}} \sum_{\mathbf{v}_2 \in \Sigma^{q-1}} p_{\mathbf{v}_1} p_{\mathbf{v}_2} \notag \\
\le & \sum_{k=1}^{n-1} \sum_{q=0}^{n-k-2} \chi(k,q). \label{eqnr3}
\end{align}
Here the last step follows from the fact that
\begin{align}
\sum_{i=2}^{\tau} \sum_{\mathbf{v}_1 \in A_{i,k,q}} p_{\mathbf{v}_1} \leq \sum_{\mathbf{v} \in \Sigma^{n-k-q-2}} \sum_{g \in \Sigma_G} p_{\mathbf v} p_g \leq 1.
\end{align}
Combining \eqref{eqnr2} and \eqref{eqnr3} gives
\begin{align}\label{eqnr4}
\sum_{i=1}^{\tau} \sum_{\mathbf{u} \in \Sigma^n} 1_{\{1,\ldots,s(\mathbf{u})\}}(i) p_{\mathbf{u}} \big(\varepsilon t^{-q_{i,1}(\mathbf{u})}\big)^{\ell^{-1}_{\mathbf{b}_i(\mathbf{u})}r^{-1}_{g_{i,1}(\mathbf{u})}} \leq \varepsilon^{1/r_{\max}} + \sum_{k=1}^{n-1} \sum_{q=0}^{n-k-1} \chi(k,q).
\end{align}
Furthermore, for each $\mathbf{b} \in \Sigma_B^k$ and $g \in \Sigma_G$ we have again by setting $r_{\max} = \max\{r_j: j \in \Sigma_G\}$ and $\alpha = t^{1/r_{\max}}$ that
\begin{align}\label{eqnr5}
\sum_{q=0}^{n-k-1} (t^{-q})^{\ell_{\mathbf{b}}^{-1} r_g^{-1}} \leq  \sum_{q=0}^{n-k-1} \big(\alpha^{-\ell_{\mathbf{b}}^{-1}}\big)^q \leq \frac{1}{1-\alpha^{-\ell_{\mathbf{b}}^{-1}}} \leq \frac{\alpha \ell_{\mathbf{b}}^{-1}}{\alpha^{\ell_{\mathbf{b}}^{-1}}-1} \ell_{\mathbf{b}}  \leq \frac{\alpha}{\log(\alpha)} \ell_{\mathbf{b}},
\end{align}
where the last step follows from the fact that $f(x) = \frac{x}{\alpha^x-1}$ is a decreasing function and $\lim_{x \downarrow 0} f(x) = \frac{1}{\log \alpha}$. Hence, combining \eqref{eqnr1}, \eqref{eqnr4} and \eqref{eqnr5} gives
\[ \begin{split}
\lambda_n (I_0(\varepsilon)) \le \ & \bigg( 2 + \frac{K_2K_5}{1-1/\alpha} + K_2 K_5 \bigg) \varepsilon^{1/r_{\max}} + K_2 K_5 \sum_{k=1}^{n-1} \sum_{q=0}^{n-k-1} \sum_{\mathbf{b} \in \Sigma_B^k} \sum_{g \in \Sigma_G} p_{\mathbf{b}} p_g \big(\varepsilon t^{-q}\big)^{\ell_{\mathbf{b}}^{-1} r_g^{-1}}\\
\le \ & \bigg( 2 + K_2K_5 \frac{2\alpha-1}{\alpha-1} \bigg) \varepsilon^{1/r_{\max}} + K_2 K_5 \sum_{k=1}^{n-1} \sum_{\mathbf{b} \in \Sigma_B^k} \sum_{g \in \Sigma_G} p_{\mathbf{b}} p_g \varepsilon^{\ell_{\mathbf{b}}^{-1} r_g^{-1}} \frac{\alpha \ell_{\mathbf b}}{\log (\alpha)}\\
\le \ & K_6 \sum_{g \in \Sigma_G} p_g \sum_{k=0}^{n-1} \sum_{\mathbf{b} \in \Sigma_B^k}  p_{\mathbf{b}} \ell_{\mathbf b}\varepsilon^{\ell_{\mathbf{b}}^{-1} r_g^{-1}},
\end{split}\]
where $K_6 = \frac1{\min\{ p_g \, : \, g \in \Sigma_G\}} \big( 2 + K_2K_5 \frac{2\alpha-1}{\alpha-1} \big) + \frac{K_2K_5 \alpha}{\log \alpha}$.
\end{proof}

We are now ready to prove Theorem \ref{thrm3.1}.
\begin{proof}[Proof of Theorem \ref{thrm3.1}] Let $A \subseteq [0,1]$ be a Borel set. First suppose that $\lambda(A) \geq \frac{\varepsilon_0}{3}$. Then there exists a constant $C = C(\frac{\varepsilon_0}{3}) > 0$ such that
\begin{align}
\lambda_n(A) \leq 1 \leq C  \sum_{g \in \Sigma_G} p_g \sum_{k=0}^{\infty} \sum_{\mathbf{b} \in \Sigma_B^k} p_{\mathbf{b}} \ell_{\mathbf{b}} \cdot \lambda(A)^{\ell_{\mathbf{b}}^{-1} r_g^{-1}}.
\end{align}
Now suppose that $\lambda(A) < \frac{\varepsilon_0}{3}$ and set $\varepsilon = 3\lambda(A)$. It follows from Proposition \ref{prop0.6} that for all $n \in \mathbb{N}$ and all $\mathbf u \in \Sigma^n$ we have
\begin{align*}
\lambda\big(T_{\mathbf u}^{-1} A\big) \leq K_1 \big(\lambda(T_{\mathbf u}^{-1}I_0(\varepsilon)) + \lambda(T_{\mathbf u}^{-1}I_c(\varepsilon))\big).
\end{align*}
Together with \eqref{q:measureIm} and Proposition \ref{prop15} this yields for all $n \in \mathbb{N}$ that
\[ \lambda_n(A) \le K_1\cdot (K_3 + K_6) \sum_{g \in \Sigma_G} p_g \sum_{k=0}^{\infty} \sum_{\mathbf{b} \in \Sigma_B^k} p_{\mathbf{b}} \ell_{\mathbf{b}} \cdot \varepsilon^{\ell_{\mathbf{b}}^{-1} r_g^{-1}}.\]
This gives the result.
\end{proof}

\section{Further results and final remarks}\label{sec:cr}

\subsection{Proof of Corollaries \ref{cor2} and \ref{cor}}\label{sec3.4}
In this section we prove Corollaries \ref{cor2} and \ref{cor}.

\begin{proof}[Proof of Corollary~\ref{cor2}]
We use the bound \eqref{eq:2.10} obtained in Theorem \ref{MAIN2}. For convenience, we set $\ell = \ell_{\max}$ and $x=\lambda(A)^{1/r_{\max}}$. The asymptotics is determined by the interplay between $\theta^k\searrow0$ and $x^{1/\ell^k}\nearrow 1$. First suppose $\theta < x^{1/\ell}$. Then $\lambda(A) > \theta^{\ell r_{\max}}$, so there exists a constant $C = C( \theta^{\ell r_{\max}}) > 0$ such that
\[ \mu_{\mathbf p}(A)\le C \cdot \frac1{ \log^{\varkappa} (1/\lambda(A))}.\]
Now suppose $\theta \geq x^{1/\ell}$. Note that $\theta^N\ge x^{1/\ell^N}$ if and only if
\[ \log N + N \log \ell \le \log \bigg( \frac{\log x}{\log \theta} \bigg).\]
Since $\log N \le N$, this last inequality is satisfied if we take for example
\begin{equation}\label{q:estimateN}
N=\left\lfloor \frac 1{1 + \log\ell }\log \bigg( \frac {\log x}{\log\theta} \bigg) \right\rfloor= \left\lfloor \frac 1{1+\log\ell }\log \bigg( \frac {\log(1/x)}{\log (1/\theta)} \bigg)\right\rfloor,
\end{equation}
where $\lfloor y \rfloor$ denotes the largest integer not exceeding $y$. Taking $N$ as in \eqref{q:estimateN}, note that it follows from $\theta \geq x^{1/\ell}$ that $N \geq 0$. Then $\theta^k\ge x^{1/\ell^k}$ for all $k\le N$ as well, and hence
\[\aligned
\sum_{k=0}^{\infty} \theta^k x^{1/\ell^k} &= \sum_{k=0}^{N} \theta^k x^{1/\ell^k} +
\sum_{k=N+1}^{\infty} \theta^k x^{1/\ell^k} \le \sum_{k=0}^{N} \theta^k \cdot x^{1/\ell^N} 
+\sum_{k=N+1}^{\infty} \theta^k \cdot 1\\
&\le \frac {1}{1-\theta} x^{1/\ell^N} +\frac {\theta^{N+1} }{1-\theta}\le \frac {1}{1-\theta} (1+\theta) \theta^N.
\endaligned
\]
From \eqref{q:estimateN} we see that $N \ge \frac 1{1+\log\ell }\log \big( \frac {\log x}{\log\theta} \big) -1$, thus
\[
\begin{split}
\theta^N =\ & \exp(N\log\theta) \le \exp\bigg(\bigg(\frac 1{1+\log\ell }\log \bigg(\frac {\log(1/x)}{\log (1/\theta)}\bigg) -1 \bigg)\log\theta\bigg) \\
=\ & \exp\bigg(  \frac {\log\theta}{1+\log\ell }\log {\log(1/x)} + C(\ell,\theta)\bigg)\\
=\ & \overline{C}(\ell,\theta) \big( \log( 1/x)\big)^{\frac {\log\theta}{1+\log \ell}} = \overline{C}(\ell,\theta)  \bigg( \frac{r_{\max}} {\log(1/\lambda(A))} \bigg)^\varkappa,
\end{split}
\]
where we set $\varkappa =\frac {\log(1/\theta)}{1+\log\ell}>0$, and where $C(\ell,\theta) \in \mathbb{R}$ and $\overline{C}(\ell,\theta) > 0$ are constants that only depend on $\ell$ and $\theta$.
We conclude from the bound \eqref{eq:2.10} that 
\[ \mu_{\mathbf p}(A)\le K \cdot \frac1{ \log^{\varkappa} (1/\lambda(A))}\]
for some positive constant $K$.
\end{proof}

The proof of Corollary \ref{cor} consists of two steps. Firstly we show that any weak limit point of $\mu_{\mathbf p_n}$ is a stationary measure, i.e., satisfies \eqref{eqn9}, and secondly that any weak limit point of $\mu_{\mathbf p_n}$ is absolutely continuous with respect to the Lebesgue measure. The corollary then follows from the uniqueness of absolutely continuous stationary measures given by Theorem \ref{MAIN}.

\begin{proof}[Proof of Corollary~\ref{cor}]
For each $n \ge 0$, let $\mathbf p_n = (p_{n,j})_{j \in \Sigma}$ be a positive probability vector such that $\sup_{n}\sum_{b \in \Sigma_B} p_{n,b}  \ell_b<1$ and assume that $\lim_{n\to \infty}\mathbf p_n=\mathbf p$ in $\mathbb R_+^N$ for some $\mathbf p= (p_j)_{j \in \Sigma}$. Let $\tilde\mu$ be a weak limit point of $\mu_{\mathbf p_n}$. Again, note that such a $\tilde{\mu}$ exists because the space of probability measures on $[0,1]$ equipped with the weak topology is sequentially compact. After passing to a subsequence we have for any continuous function $\varphi:[0,1]\to \mathbb R$ that
\[ \lim_{n \to \infty}\int_{[0,1]} \varphi \, d\mu_{\mathbf p_n} = \int_{[0,1]} \varphi \, d\tilde\mu.\]
Moreover, by the stationarity of the measures $\mu_{\mathbf p_n}$ it follows that for each $n \ge 1$,
\[ \int_{[0,1]} \varphi \, d\mu_{\mathbf p_n} =\sum_{j \in \Sigma} p_{n,j} \int_{[0,1]} \varphi \circ T_j \, d\mu_{\mathbf p_n}.\]
To prove that $\tilde \mu$ is stationary for $\mathbf p$, it is sufficient to show that for each $j \in \Sigma$,
\begin{equation}\label{eq:intphii}
\lim_{n \to \infty} p_{n,j} \int_{[0,1]}\varphi\circ T_j \, d\mu_{\mathbf p_n} = p_j \int_{[0,1]} \varphi\circ T_j \, d\tilde\mu.
\end{equation}

If $j\in\Sigma_B$ this is obvious, since then $\varphi\circ T_j$ is continuous. For $j \in \Sigma_G$ the map $\varphi\circ T_j$ might have a discontinuity at $c$. In this case, we let $\varphi_\delta$ be the continuous function given by $\varphi_\delta (x) = \varphi\circ T_j(x)$ for $x\in I\setminus(c-\delta, c+\delta)$ and $\varphi_\delta$ is linear otherwise. Then we have 
\[
\lim_{n \to \infty} \left| p_{n,j}\int_{[0,1]} \varphi_\delta \, d\mu_{\mathbf p_n}-p_{j}\int_{[0,1]}\varphi_\delta \, d\tilde{\mu}\right|=0,
\]
by the weak convergence and since $p_{n,j}\to p_{j}$ as $n \rightarrow \infty$. Also, we have
\[
\left| p_{n,j} \int_{[0,1]}\varphi\circ T_j \, d\mu_{\mathbf p_n}- p_{n,j} \int_{[0,1]}\varphi_\delta \,d\mu_{\mathbf p_n}\right| \le C\mu_{\mathbf p_n}([c-\delta,c+\delta]) \to 0 \text{ as } \delta\to 0,
\]
where the convergence is uniform in $n$ because of \eqref{eq:2.10}. Similarly,
\[
\left| p_j \int_{[0,1]}\varphi\circ T_j d\tilde{\mu}- p_j \int_{[0,1]}\varphi_\delta d\tilde{\mu}\right| \le C\tilde{\mu}([c-\delta,c+\delta]) \to 0 \text{ as } \delta\to 0,
\]
 The last three relations imply \eqref{eq:intphii}.

\medskip
To show that $\tilde \mu$ is absolutely continuous with respect to the Lebesgue measure $\lambda$ we proceed as in the proof of Theorem~\ref{MAIN2}.  We set $\tilde{\theta} = \sup_{n}\sum_{b \in \Sigma_B} p_{n, b}  \ell_b<1$. Let $A \subseteq [0,1]$ be a Borel set. By Theorem \ref{MAIN} every $\mu_{\mathbf p_n}$ satisfies \eqref{eq:2.10}, so that
\[\mu_{\mathbf p_n} (A) \le C_n \sum_{k=0}^{\infty} \tilde \theta^k \lambda(A)^{\ell_{\max}^{-k}r_{\max}^{-1}},\]
where the constant $C_n$ depends on $(\sum_{g \in \Sigma_G} p_{n,g}\big)^{-1}$ and $(\min \{ p_{n,g} \, : \, g \in \Sigma_G\})^{-1}$ (and properties of the good and bad maps themselves that are not linked to the probabilities). Since each $\mathbf p_n$, $n \ge 0$, is a positive probability vector and $\lim_{n \rightarrow \infty} \mathbf p_n = \mathbf p$, both these quantities can be bounded from above and $\tilde C:= \sup_n C_n < \infty$. From the weak convergence of $\mu_{\mathbf p_n}$ to $\tilde \mu$ we obtain as in \eqref{eqn3.42} using the Portmanteau Theorem that
\[ \tilde \mu(A) \le \tilde C \sum_{k=0}^{\infty} \tilde \theta^k \lambda(A)^{\ell_{\max}^{-k}r_{\max}^{-1}}.\]
Hence, $\tilde \mu \ll \lambda$. By Theorem~\ref{MAIN} we know that $\mu_{\mathbf p}$ is the unique acs probability measure for $F$ and $\mathbf p$. So, $\tilde\mu=\mu_{\mathbf p}$.
\end{proof}

\subsection{The non-superattracting case}\label{ss:nonsa}
With some modifications the results from Theorem \ref{MAIN} and Theorem~\ref{MAIN2} can be extended to the class $\mathfrak{B}^1 \supseteq \mathfrak B$ of bad maps of which critical order $\ell_b$ in (B3) is allowed to be equal to 1. We will list the modified statements and the necessary modifications to the proofs here. Note that for each $T \in \mathfrak{B}^1 \setminus \mathfrak B$, we have $DT(c) \neq 0$, and due to the minimal principle, $|DT(c)| < 1$. So we consider $T_1, \ldots, T_N \in \mathfrak G \cup \mathfrak B^1$ with $\Sigma_B^1 = \{1 \leq j \leq N: T_j \in \mathfrak B^1\}$ and $\Sigma_G$, $\Sigma_B$ as before and such that $\Sigma_G, \Sigma_B^1 \backslash \Sigma_B \neq \emptyset$. Furthermore, we write again $\Sigma = \{1,\ldots,N\} = \Sigma_G \cup \Sigma_B^1$.

\begin{theorem}\label{MAIN3} Let $\{T_j: j \in \Sigma\}$ be as above and $\mathbf p = (p_j)_{j \in \Sigma}$ a positive probability vector.
 \begin{enumerate}
\item There exists a unique (up to scalar multiplication) stationary $\sigma$-finite measure $\mu_{\mathbf p}$ for $F$ that is absolutely continuous with respect to the one-dimensional Lebesgue measure $\lambda$. This measure is ergodic and the density $\frac{d\mu_{\mathbf p}}{d\lambda}$ is bounded away from zero and is locally Lipschitz on $(0,c)$ and $(c,1)$.
\item Suppose $\ell_{\max} > 1$. 
\begin{enumerate}[(i)]
\item The measure $\mu_{\mathbf p}$ is finite if and only if $\theta = \sum_{b \in \Sigma_B^1} p_b \ell_b <1$. In this case, for each $\hat{\theta} \in (\theta,1)$ there exists a constant $C(\hat{\theta}) > 0$ such that
\begin{align}\label{eq:5.2}
\mu_{\mathbf p}(A) \leq C(\hat{\theta}) \cdot \sum_{k=0}^{\infty} \hat{\theta}^k \lambda(A)^{\ell_{\max}^{-k} r_{\max}^{-1}}
\end{align}
for any Borel set $A \subseteq [0,1]$, where $r_{\max} = \max\{r_g: g \in \Sigma_G\}$ and $\ell_{\max} = \max\{\ell_b: b \in \Sigma_B\}$.
\item The density $\frac{d\mu_{\mathbf p}}{d\lambda}$ is not in $L^q$ for any $q > 1$. 
\end{enumerate}
\item Suppose $\ell_{\max} = 1$. 
\begin{enumerate}[(i)]
\item The measure $\mu_{\mathbf p}$ is finite, and for each $\bm \eta = (\eta_b)_{b \in \Sigma_B^1}$ such that $\eta_b > 1$ for each $b \in \Sigma_B^1$ and $\hat{\theta}(\bm \eta) = \sum_{b \in \Sigma_B^1 } p_b \eta_b < 1$ there exists a constant $C(\bm \eta) > 0$ such that
\begin{align}\label{eq:5.3}
\mu_{\mathbf p}(A) \leq C(\bm \eta) \cdot \sum_{k=0}^{\infty} \hat \theta(\bm \eta)^k \lambda(A)^{\eta_{\max}^{-k} r_{\max}^{-1}}
\end{align}
for any Borel set $A \subseteq [0,1]$, where $\eta_{\max} = \max\{\eta_b: b \in \Sigma_B^1\}$. If $\sum_{b \in \Sigma_B^1} \frac{p_b}{|DT_b(c)|} < 1$, so if the bad maps are expanding on average at the point $c$, then we can get the estimate
\begin{align}\label{eq:5.4}
\mu_{\mathbf p}(A) \leq C \cdot \lambda(A)^{r_{\max}^{-1}}
\end{align}
for some constant $C > 0$ and any Borel set $A \subseteq [0,1]$.
\item If $r_{\max} > 1$, then $\frac{d\mu_{\mathbf p}}{d\lambda} \not \in L^q$ for any $q \geq \frac{r_{\max}}{r_{\max}-1}$. If, moreover, $\sum_{b \in \Sigma_B^1} \frac{p_b}{|DT_b(c)|} < 1$, then $\frac{d\mu_{\mathbf p}}{d\lambda} \in L^q$ for all $1 \leq q < \frac{r_{\max}}{r_{\max}-1}$.
\item If $r_{\max} =1$ and $\sum_{b \in \Sigma_B^1} \frac{p_b}{|DT_b(c)|} < 1$, then $\frac{d\mu_{\mathbf p}}{d\lambda} \in L^\infty$.
\end{enumerate}
\end{enumerate}
\end{theorem}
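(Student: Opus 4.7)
The overall strategy is to trace through the proofs of Theorem \ref{MAIN} and Theorem \ref{MAIN2} and identify exactly where the superattracting assumption $\ell_b > 1$ is used, then repair those steps. Only three places are genuinely affected: Lemma \ref{lemma3.6}, the divergence argument in Proposition \ref{prop3.2}, and the $L^q$ computation at the end of the proof of Theorem \ref{MAIN}. Everything else, including the inducing scheme on $Y = C \times J$, the Koebe distortion estimates, the construction of the acs density via the random Perron--Frobenius-type operator, and the ergodicity argument, uses only the distortion properties (B1)--(B2), (B4) together with (G1)--(G4), which are untouched by allowing $\ell_b = 1$. This gives part (1) of Theorem \ref{MAIN3} directly.

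For part (2), with $\ell_{\max} > 1$, the $L^q$ failure is handled exactly as in Remark \ref{remark3.1}: in the proof of Theorem \ref{MAIN}(2) fix one bad index $b^\ast \in \Sigma_B$ with $\ell_{b^\ast} > 1$ and apply Lemma \ref{lemma3.6} only to the constant word $b^\ast \cdots b^\ast$, whose estimates make sense. For the infinity statement when $\theta \geq 1$, the same inducing at $[b^\ast b^\ast] \times (a,\xi)$ used in Proposition \ref{prop3.2} works: the divergent series $\sum \theta^n$ only requires that $\ell_{\max} > 1$ so that $\theta = \sum p_b \ell_b$ is not automatically less than $1$ (and the a.s.\ return argument uses only that good maps eventually escape $c$, not that bad ones attract superexponentially). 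For the quantitative bound \eqref{eq:5.2} with $\hat\theta$ in place of $\theta$, I would choose, for each $b$ with $\ell_b = 1$, a slightly inflated exponent $\hat\ell_b > 1$ and leave $\hat\ell_b = \ell_b$ otherwise, so that $\sum_b p_b \hat\ell_b = \hat\theta$. Using (B3) with exponent $\ell_b$ and the elementary inequality $|x-c|^{\ell_b} \le C |x-c|^{\hat\ell_b}$ (valid near $c$, absorbing the loss into a constant depending on $\hat\theta$), one can re-run Lemma \ref{lemma3.6} and subsequently Lemmata \ref{lemma0.8}, \ref{l:c5} and Proposition \ref{prop15} with $\hat\ell_b$ replacing $\ell_b$; the constants blow up as $\hat\theta \downarrow \theta$ but remain finite for each fixed $\hat\theta < 1$, giving \eqref{eq:5.2}.

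Part (3) is the genuinely new case. When $\ell_{\max} = 1$ we always have $\theta \ge \sum_b p_b = 1 - \sum_g p_g$, so the division $\theta < 1$ versus $\theta \ge 1$ from Theorem \ref{MAIN2} does not apply directly. However, the same reduction idea shows that finiteness is automatic: since $\sum_{b \in \Sigma_B^1} p_b < 1$, one can pick any $\bm\eta = (\eta_b)$ with $\eta_b > 1$ and $\hat\theta(\bm\eta) = \sum_b p_b\eta_b < 1$, and the argument of part (2) for \eqref{eq:5.2} gives \eqref{eq:5.3}. For the sharper bound \eqref{eq:5.4} under $\sum_b p_b / |DT_b(c)| < 1$, I would go back to the identity $\psi = \mathcal{P}_F \psi$ and expand near $c$: since $(\mathcal{P}_{T_b} \psi)(c) = \psi(c)/|DT_b(c)|$, iterating the operator and controlling preimages via the Koebe principle shows that under this expansion-on-average condition the density is bounded in a neighbourhood of $c$, removing the factor $\ell_{\mathbf b}$ from the telescoping sum in \eqref{eqn3.42} and collapsing the series to its $k = 0$ term, which by Proposition \ref{prop15} yields $\mu_{\mathbf p}(A) \lesssim \lambda(A)^{1/r_{\max}}$. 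The $L^q$ statements in (ii) and (iii) then follow: failure of $L^q$ for $q \ge r_{\max}/(r_{\max}-1)$ is the argument from Remark \ref{remark3.1} with $k = 0$ (which does not use any bad-map exponent); conversely, under the expansion-on-average condition the singularities at $0$ and $1$ are governed purely by the good-map exponents, so from \eqref{eq:5.4} and the standard $\int \lambda(\psi > s)\, s^{q-1}\,ds$ computation one obtains $\psi \in L^q$ for all $q < r_{\max}/(r_{\max}-1)$, and $\psi \in L^\infty$ when $r_{\max} = 1$. The main obstacle I foresee is the boundedness of $\psi$ near $c$ under $\sum_b p_b / |DT_b(c)| < 1$: one needs to convert the pointwise inequality for $\mathcal{P}_F\psi$ at $c$ into a uniform bound on a neighbourhood, which requires combining the Koebe distortion estimate with a careful fixed-point iteration rather than a single application of the operator.
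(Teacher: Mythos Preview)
Your overall strategy matches the paper's closely for parts (1), (2)(ii), the first half of (3)(i), and the $L^q$-failure statements. The paper explicitly invokes Remark \ref{remark3.1} for (1), (2)(ii), and the first half of (3)(ii), and obtains the bounds \eqref{eq:5.2}/\eqref{eq:5.3} via a lower-bound analogue of Lemma \ref{lemma3.6} with inflated exponents $\hat\eta_b = \max\{\eta_b,\ell_b\}$ (their Lemma \ref{lemma5.2}), exactly the idea you sketch.

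Two points where you diverge from the paper are worth flagging.

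\textbf{The infinity argument when $\theta \geq 1$.} You assert that Proposition \ref{prop3.2} goes through unchanged, but estimate \eqref{eqn3.31} relies on the \emph{upper} bound in Lemma \ref{lemma3.6}, whose constant $\tilde M$ is undefined when $\ell_{\min} = 1$. The paper supplies a separate lemma (Lemma \ref{lemma5.1}) establishing $|T_\omega^n(x) - c| \leq (\hat M|x-c|)^{\ell_{\omega_1}\cdots\ell_{\omega_n}}$ for $x$ near $c$ and $\omega \in (\Sigma_B^1)^{\mathbb N}$, via an induction that handles the $\ell_b = 1$ indices using only $|T_b(x)-c| < |x-c|$ near $c$. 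This is not hard, but it is a genuine extra step and your sentence ``not that bad ones attract superexponentially'' suggests you may have overlooked that some quantitative contraction bound is still needed to drive the Kac integral to infinity.

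\textbf{The sharp bound \eqref{eq:5.4}.} Here your route differs substantively. You propose to show $\psi$ is bounded near $c$ via iteration of $\psi = \mathcal P_F \psi$ and then feed this back into the telescoping sum. The paper instead observes that when $\ell_{\max}=1$ one has the preimage inclusion $T_{\mathbf b}^{-1}(I_c(\varepsilon)) \subseteq I_c(|DT_{\mathbf b}(c)|^{-1}\varepsilon)$ and simply re-runs the argument of Subsection \ref{sec3.3} with $p_{\mathbf b}$ replaced by $\tilde p_{\mathbf b} = |DT_{\mathbf b}(c)|^{-1} p_{\mathbf b}$; the series in Theorem \ref{thrm3.1} then becomes $\sum_k \Lambda^k$ with $\Lambda = \sum_b p_b / |DT_b(c)| < 1$, yielding \eqref{eq:5.4} at once. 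This sidesteps entirely the obstacle you correctly identify (upgrading a pointwise identity at $c$ to a uniform bound on a neighbourhood). Your approach could likely be made to work, but the paper's preimage-inclusion trick keeps everything at the level of the $\lambda_n$ estimates and never touches the density directly, which is both shorter and safer.
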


\medskip

The main issue we need to deal with in order to get Theorem~\ref{MAIN3} is adapting Lemma~\ref{lemma3.6}, i.e., finding suitable bounds for $|T_{\omega}^n(x)-c|$, since the constants $\tilde K$ and $\tilde M$ from Lemma~\ref{lemma3.6} are not well defined in case $\ell_{\min} = 1$. This is done in the next two lemmata. For the upper bound of $|T_{\omega}^n(x)-c|$ we assume $\ell_{\max} > 1$ since we only need it for the proof of part (2)(i).

\begin{lemma}\label{lemma5.1}
Let $\{T_j: j \in \Sigma\}$ be as above. Suppose $\ell_{\max} > 1$. There are constants $\hat{M} > 1$ and $\delta > 0$ such that for all $n \in \mathbb{N}$, $\omega \in (\Sigma_B^1)^{\mathbb{N}}$ and $x \in [c-\delta,c+\delta]$ we have
\begin{align}\label{eq5.2}
|T_{\omega}^n(x)-c| \leq \Big(\hat{M} |x-c|\Big)^{\ell_{\omega_1} \cdots \ell_{\omega_n}}.
\end{align}
\end{lemma}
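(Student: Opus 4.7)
The plan is to adapt the proof of Lemma~\ref{lemma3.6}, with the complication that the constant $\tilde{M}$ becomes ill-defined once some $\ell_b=1$. The key observation is that for any $b\in\Sigma_B^1$ we have $|DT_b(c)|<1$ (it equals $0$ when $\ell_b>1$, and is $<1$ by the minimum principle when $\ell_b=1$), so on a small enough neighbourhood of $c$ each bad map is a strict contraction towards $c$. Consequently, indices with $\ell_b=1$ contribute factors strictly less than $1$ that can only help an upper bound, while indices with $\ell_b>1$ are controlled by the geometric decay provided by $\ell_{*}:=\min\{\ell_b:b\in\Sigma_B,\,\ell_b>1\}>1$, well-defined because $\ell_{\max}>1$.

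First I would fix $\delta>0$ small enough that $|DT_b(y)|<1$ for every $b\in\Sigma_B^1$ and every $y\in[c-\delta,c+\delta]$; this is possible by continuity of each $DT_b$ and the finiteness of $\Sigma_B^1$. Together with $T_b(c)=c$ and the monotonicity of each branch in (B1), this guarantees $T_b([c-\delta,c+\delta])\subseteq[c-\delta,c+\delta]$, so iterates $T_\omega^n(x)$ remain in $[c-\delta,c+\delta]$ whenever $x$ does. For $y\in[c-\delta,c+\delta]$ define
\[
A_b \;=\; \begin{cases} M_b/\ell_b, & \ell_b>1,\\[2pt] \max_{|z-c|\le\delta}|DT_b(z)|, & \ell_b=1, \end{cases}
\]
so that $|T_b(y)-c|\le A_b|y-c|^{\ell_b}$: the first case follows from (B3) by integration exactly as in the proof of Lemma~\ref{lemma3.6}, while the second follows from the mean value theorem and yields $A_b<1$.

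Iterating this estimate along a sequence $\omega$ gives $|T_\omega^n(x)-c|\le B_n(\omega)|x-c|^{L_n(\omega)}$, where $L_n(\omega)=\ell_{\omega_1}\cdots\ell_{\omega_n}$ and $B_n(\omega)=\prod_{i=1}^n A_{\omega_i}^{\ell_{\omega_{i+1}}\cdots\ell_{\omega_n}}$ (empty product $=1$). Taking logarithms and dividing by $L_n(\omega)$ produces
\[
\frac{\log B_n(\omega)}{L_n(\omega)} \;=\; \sum_{i=1}^n \frac{\log A_{\omega_i}}{\ell_{\omega_1}\cdots\ell_{\omega_i}}.
\]
I would then split this sum according to whether $\ell_{\omega_i}=1$ or $\ell_{\omega_i}>1$. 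Indices with $\ell_{\omega_i}=1$ have $\log A_{\omega_i}<0$ and can be discarded when seeking an upper bound. For indices with $\ell_{\omega_i}>1$, if $i$ is the $k$-th such index then $\ell_{\omega_1}\cdots\ell_{\omega_i}\ge \ell_{*}^{k}$, so these terms sum to at most $\log A_{\max}\cdot\sum_{k=1}^{\infty}\ell_{*}^{-k}=(\log A_{\max})/(\ell_{*}-1)$, where $A_{\max}=\max\{A_b:b\in\Sigma_B,\,\ell_b>1\}$. Setting $\hat{M}=\max\{1,\,A_{\max}^{1/(\ell_{*}-1)}\}$ yields $B_n(\omega)\le\hat{M}^{L_n(\omega)}$ and hence \eqref{eq5.2}.

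The main obstacle is the bookkeeping in this final step: one must verify that the contractive contributions from $\ell_b=1$ indices do not obstruct the geometric-series argument used for $\ell_b>1$ indices. The resolution is that the denominators $\ell_{\omega_1}\cdots\ell_{\omega_i}$ grow strictly only at $\ell_b>1$ steps, so the positive log-contributions see precisely the same $\ell_{*}^{-k}$ decay as in the proof of Lemma~\ref{lemma3.6}, while the negative log-contributions from $\ell_b=1$ steps only improve the inequality.
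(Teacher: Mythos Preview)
Your proof is correct and follows essentially the same strategy as the paper: choose $\delta$ so that all bad maps contract near $c$, use the one-step estimate $|T_b(y)-c|\le A_b|y-c|^{\ell_b}$ (with $A_b<1$ when $\ell_b=1$), and then show that the accumulated constant $B_n(\omega)^{1/L_n(\omega)}$ is bounded by a geometric series over only the $\ell_b>1$ steps. The paper carries out the same idea by induction on $n$, tracking the exponent $(1-\upsilon^{-m(n,\omega)})/(\upsilon-1)$ explicitly (where $m(n,\omega)$ counts the superattracting steps and $\upsilon=\ell_*$), whereas you take logarithms and bound the resulting sum directly; the two arguments are equivalent and lead to the same constant $\hat M = A_{\max}^{1/(\ell_*-1)}$.
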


\begin{proof}
Similar as in the proof of Lemma \ref{lemma3.6} it follows that there exists an $M >1$ such that for any $b \in \Sigma_B$ and $x \in [0,1]$ we have
\begin{align}\label{eq5.7}
|T_b(x)-c| \leq M |x-c|^{\ell_b}.
\end{align}
Furthermore, there exists a $\delta > 0$ such that $|DT_b(x)| < 1$ for all $x \in [c-\delta,c+\delta]$ and $b \in \Sigma_B^1$. This implies
\begin{align}\label{eq5.8}
|T_b(x)-c| < |x-c|
\end{align}
 for all $x \in [c-\delta,c+\delta]$ and $b \in \Sigma_B^1$. Note that $\Sigma_B \neq \emptyset$ because $\ell_{\max} > 1$. We set $\upsilon = \min\{\ell_b: b \in \Sigma_B\} > 1$ and $\hat{M} = M^{\frac{1}{\upsilon-1}}$. For each $n \in \mathbb{N}$ and $\omega \in (\Sigma_B^1)^{\mathbb{N}}$, write
\begin{align}
m(n,\omega) = \#\{1 \le \omega_i\le n \, :\, \ell_{\omega_i} > 1\}.
\end{align}
The statement follows by showing that for all $n \in \mathbb{N}$, $\omega \in (\Sigma_B^1)^{\mathbb{N}}$ and $x \in [c-\delta,c+\delta]$ we have
\begin{align}\label{eqn5.9}
|T_{\omega}^n(x)-c| \leq \Big(M^{(1-\upsilon^{-m(n,\omega)})/(\upsilon-1)}|x-c|\Big)^{\ell_{\omega_1} \cdots \ell_{\omega_n}}.
\end{align}
We prove \eqref{eqn5.9} by induction. From \eqref{eq5.7} and \eqref{eq5.8} it follows that \eqref{eqn5.9} holds for $n=1$. Now suppose \eqref{eqn5.9} holds for some $n \in \mathbb{N}$. Let $\omega \in (\Sigma_B^1)^{\mathbb{N}}$ and $y \in [c-\delta,c+\delta]$. If $\ell_{\omega_{n+1}} = 1$, then the desired result follows by applying \eqref{eq5.8} with $j = \omega_{n+1}$ and $x = T_{\omega}^n(y)$. Suppose $\ell_{\omega_{n+1}} > 1$. Then, using \eqref{eq5.7},
\begin{align*}
|T_{\omega}^{n+1}(y) -c| & \leq M |T_{\omega}^n(y)-c|^{\ell_{\omega_{n+1}}} \\
& \leq \Big(M^{(1-\upsilon^{-m(n,\omega)})/(\upsilon-1)+\upsilon^{-m(n+1,\omega)}}|y-c|\Big)^{\ell_{\omega_1} \cdots \ell_{\omega_{n+1}}}.
\end{align*}
Using that
\begin{align}
\upsilon^{-m(n+1,\omega)} = \frac{\upsilon^{-m(n,\omega)}-\upsilon^{-m(n+1,\omega)}}{\upsilon-1},
\end{align}
the desired result follows.
\end{proof}

\begin{lemma}\label{lemma5.2}
Let $\{T_j: j \in \Sigma\}$ be as above. Let $\bm{\eta} = (\eta_b)_{b \in \Sigma_B^1}$ be a vector such that $\eta_b > 1$ for each $b \in \Sigma_B^1$. Set $\hat{\eta}_b = \max\{\eta_b,\ell_b\}$ for each $b \in \Sigma_B^1$. Then there exists a constant $\hat{K}(\bm{\eta}) \in (0,1)$ such that for all $n \in \mathbb{N}$, $\omega \in (\Sigma_B^1)^{\mathbb{N}}$ and $x \in [0,1]$ we have
\begin{align}
\Big(\hat{K}(\bm{\eta})|x-c|\Big)^{\hat{\eta}_{\omega_1} \cdots \hat{\eta}_{\omega_n}} \leq |T_{\omega}^n(x)-c|.
\end{align}
\end{lemma}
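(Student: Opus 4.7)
The plan is to mimic the structure of the lower-bound part of Lemma~\ref{lemma3.6}, but with $\hat{\eta}_b$ in place of $\ell_b$ so that we get a product of exponents exceeding one (recall that $\hat{\eta}_b>1$ for every $b\in\Sigma_B^1$, which is crucial for summing the geometric-type series at the end).

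First I would establish the one-step bound. From condition~(B3) and monotonicity of $T_b$ on each side of $c$, for every $b\in\Sigma_B^1$ and $x\in[0,1]$,
\[
|T_b(x)-c| \;=\; \Big|\int_c^x DT_b(y)\,dy\Big| \;\ge\; \frac{K_b}{\ell_b}\,|x-c|^{\ell_b}.
\]
Since $c\in(0,1)$ we have $|x-c|\le\max\{c,1-c\}\le 1$ on $[0,1]$, and $\hat{\eta}_b\ge\ell_b$, so $|x-c|^{\ell_b}\ge|x-c|^{\hat{\eta}_b}$. Setting
\[
C_1 \;=\; \min_{b\in\Sigma_B^1}\frac{K_b}{\ell_b}\;\in\;(0,1)
\]
(which is in $(0,1)$ because $K_b<1$ and $\ell_b\ge 1$), we obtain the uniform one-step estimate $|T_b(x)-c|\ge C_1\,|x-c|^{\hat{\eta}_b}$ for all $b\in\Sigma_B^1$ and $x\in[0,1]$.

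Next I would iterate this bound by induction on $n$. A straightforward induction gives, for all $n\in\mathbb{N}$ and $\omega\in(\Sigma_B^1)^{\mathbb N}$,
\[
|T_\omega^n(x)-c| \;\ge\; C_1^{\,1+\hat{\eta}_{\omega_n}+\hat{\eta}_{\omega_{n-1}}\hat{\eta}_{\omega_n}+\cdots+\hat{\eta}_{\omega_2}\cdots\hat{\eta}_{\omega_n}}\,|x-c|^{\hat{\eta}_{\omega_1}\cdots\hat{\eta}_{\omega_n}}.
\]
Dividing the exponent of $C_1$ by $P_n := \hat{\eta}_{\omega_1}\cdots\hat{\eta}_{\omega_n}$ and setting $\hat{\eta}_{\min}=\min_{b\in\Sigma_B^1}\hat{\eta}_b>1$ yields
\[
\frac{1+\hat{\eta}_{\omega_n}+\hat{\eta}_{\omega_{n-1}}\hat{\eta}_{\omega_n}+\cdots+\hat{\eta}_{\omega_2}\cdots\hat{\eta}_{\omega_n}}{P_n}
\;=\;\sum_{i=1}^{n}\frac{1}{\hat{\eta}_{\omega_1}\cdots\hat{\eta}_{\omega_i}}
\;<\;\frac{1}{\hat{\eta}_{\min}-1}.
\]
Since $C_1\in(0,1)$, raising it to the larger exponent $\frac{P_n}{\hat{\eta}_{\min}-1}$ only makes it smaller, so defining
\[
\hat{K}(\bm{\eta}) \;=\; C_1^{\,1/(\hat{\eta}_{\min}-1)} \;\in\; (0,1)
\]
gives
\[
|T_\omega^n(x)-c| \;\ge\; \hat{K}(\bm{\eta})^{P_n}\,|x-c|^{P_n} \;=\; \bigl(\hat{K}(\bm{\eta})\,|x-c|\bigr)^{\hat{\eta}_{\omega_1}\cdots\hat{\eta}_{\omega_n}},
\]
which is the desired inequality. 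The main (minor) subtlety is to make sure that $\hat{\eta}_b\ge\ell_b$ really lets us pass from the natural $\ell_b$-exponent coming from (B3) to a $\hat{\eta}_b$-exponent without losing positivity of the constant; this is exactly where the bound $|x-c|\le 1$ together with $\hat{\eta}_b\ge\ell_b$ is used, and after that the geometric-series argument — which is the only place where $\hat{\eta}_{\min}>1$ is needed — carries through just as in Lemma~\ref{lemma3.6}.
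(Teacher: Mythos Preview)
Your proof is correct and follows essentially the same strategy as the paper's: both reduce to the argument of Lemma~\ref{lemma3.6} after replacing the exponent $\ell_b$ by $\hat{\eta}_b\ge\ell_b$ using $|x-c|\le 1$, then sum the resulting geometric-type series using $\hat{\eta}_{\min}>1$. The only cosmetic difference is that the paper makes the substitution at the level of the derivative bound $|DT_b(x)|\ge K_b|x-c|^{\hat{\eta}_b-1}$ before integrating, whereas you integrate first and then substitute, leading to a slightly different (in fact slightly sharper) constant $\hat{K}(\bm{\eta})$.
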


\begin{proof}
Note from (B3) that for each $b \in \Sigma_B^1$ we have
\begin{align*}
K_b |x-c|^{\hat{\eta}_b-1} \leq K_b |x-c|^{\ell_b-1} \leq |DT_b(x)|.
\end{align*}
The result now follows in the same way as in the proof of Lemma \ref{lemma3.6} by setting $\hat{\eta}_{\min} = \min\{\hat{\eta}_b : b \in \Sigma_B^1\}$, $\hat{\eta}_{\max} = \max\{\hat{\eta}_b : b \in \Sigma_B^1\}$ and $\hat{K}(\bm{\eta}) = \big( \frac{\min \{ K_b \, : \, b \in \Sigma_B^1 \}}{\hat{\eta}_{\max}}\big)^ \frac1{\hat{\eta}_{\min}-1}$.
\end{proof}

\begin{proof}[Proof of Theorem \ref{MAIN3}]
Firstly, note that (1), (2)(ii) and the first part of (3)(ii) immediately follow from Remark \ref{remark3.1}. Moreover, as in \cite[Section 5.4]{dMvS93} it can be shown that \eqref{eq:5.4} implies that $\frac{d\mu_{\mathbf p}}{d\lambda}$ is in $L^q$ if $r_{\max} > 1$ and $1 \leq q < \frac{r_{\max}}{r_{\max}-1}$, giving the remainder of 3(ii). It is immediate that \eqref{eq:5.4} implies that $\frac{d\mu_{\mathbf p}}{d\lambda}$ is in $L^{\infty}$ if $r_{\max} = 1$, so (3)(iii) holds. Hence, it remains to prove (2i) and (3i).

\medskip
Suppose $\theta = \sum_{b \in \Sigma_B^1} p_b \ell_b \geq 1$, which means that $\ell_{\max} > 1$. The proof that in this case $\mu_{\mathbf{p}}$ is infinite follows by the same reasoning as in Subsection \ref{subsec4.1} by now taking $\gamma = \min\{\delta,\frac{1}{2}\hat{M}^{-1}\}$ with $\delta$ and $\hat{M}$ as in the proof of Lemma \ref{lemma5.1}. Now suppose $\theta < 1$. Let $\bm{\eta} = (\eta_b)_{b \in \Sigma_B^1}$ be a vector such that $\eta_b > 1$ for each $b \in \Sigma_B^1$ and $\hat{\theta}(\bm{\eta}) = \sum_{b \in \Sigma_B^1} p_b \hat{\eta}_b < 1$ with again $\hat{\eta}_b = \max\{\eta_b,\ell_b\}$. Applying Lemma \ref{lemma5.2} yields that for all $\varepsilon >0$, $n \in \mathbb N$, $\mathbf b \in (\Sigma_B^1)^n$,
\begin{equation}\label{eq5.12}
T_{\mathbf b}^{-1} \big( I_c(\varepsilon) \big) \subseteq I_c \big( \hat{K}(\bm{\eta})^{-1}\varepsilon^{\hat{\eta}_{\mathbf b}^{-1}} \big),
\end{equation}
where we used the notation $\hat{\eta}_{\mathbf b} = \hat{\eta}_{b_1} \cdots \hat{\eta}_{b_n}$ for a word $\mathbf b = b_1 \cdots b_n$. Following the line of reasoning in Subsection \ref{sec3.3} with \eqref{eq5.12} instead of \eqref{eq3.57}, we obtain that there exists a constant $C(\bm{\eta}) > 0$ such that
\begin{align}\label{eq:5.13}
\mu_{\mathbf p}(A) \leq C(\bm \eta) \cdot \sum_{k=0}^{\infty} \hat\theta(\bm \eta)^k \lambda(A)^{\hat \eta_{\max}^{-k} r_{\max}^{-1}}
\end{align}
for any Borel set $A \subseteq [0,1]$. In case $\ell_{\max} > 1$ we can choose $\bm \eta$ to satisfy $\hat{\eta}_{\max} = \ell_{\max}$ and such that $\hat{\theta}(\bm \eta) -\theta$ is arbitrarily small, which yields (2)(i). In case $\ell_{\max} = 1$, then $\hat{\eta}_{\max} = \eta_{\max}$, so this together with \eqref{eq:5.13} yields the first part of (3)(i).

\medskip
Finally, for the second part of (3)(i), suppose $\ell_{\max} = 1$ and $\Lambda = \sum_{b \in \Sigma_B^1} \frac{p_b}{|DT_b(c)|} < 1$. Setting $K_{\mathbf{b}} = |DT_{\mathbf{b}}(c)|$ for each $\mathbf{b} \in (\Sigma_B^1)^n$ and $n \in \mathbb{N}$, note that for all $\varepsilon > 0$, $n \in \mathbb{N}$, $\mathbf{b} \in (\Sigma_B^1)^n$,
\begin{equation}\label{eq5.14}
T_{\mathbf b}^{-1} \big( I_c(\varepsilon) \big) \subseteq I_c \big( K_{\mathbf{b}}^{-1} \varepsilon \big).
\end{equation}
By using \eqref{eq5.14} instead of \eqref{eq3.57}, letting $\tilde{p}_{\mathbf b} = K_{\mathbf b}^{-1} p_{\mathbf b}$ play the role of $p_{\mathbf b}$ in the reasoning of Subsection \ref{sec3.3} and noting that $\Lambda^k = \sum_{\mathbf b \in (\Sigma_B^1)^k} \tilde{p}_{\mathbf b}$, we arrive similarly as for Theorem \ref{thrm3.1} to the conclusion that there exists a constant $\tilde{C} > 0$ such that for all $n \in \mathbb{N}$ and all Borel sets $A \subseteq [0,1]$,
\begin{align}\label{eqn5.15}
\lambda_n(A) \leq \tilde{C} \cdot \sum_{g \in \Sigma_G} p_g \Big(\sum_{k=0}^{\infty} \Lambda^k\Big) \lambda(A)^{r_g^{-1}}.
\end{align}
This proves the remaining part of (3)(i).
\end{proof}

\subsection{Final remarks}
The results from Theorem~\ref{MAIN3} contain one possible extension of our main results to another set of conditions (G1)--(G4), (B1)--(B4). In this section we discuss some of the questions that our main results brought up in this respect, i.e., about whether or not some of the conditions (G1)--(G4), (B1)--(B4) can be relaxed, and questions about other possible future extensions.

\medskip
A condition that plays a fundamental role in the proofs of Theorem~\ref{MAIN} and Theorem~\ref{MAIN2} is the fact that the critical point is mapped to a point that is a common repelling fixed point for all maps $T_j$. We considered whether this condition can be relaxed, for instance by assuming that the branches of one of the good maps are not full. However, in this case the critical values of the random system are not just $0,c,1$ but contain all the values of all possible postcritical orbits of $c$. This has several consequences:
\vskip .1cm
- An invariant density (if it exists) clearly cannot be locally Lipschitz on $(0,c)$ and $(c,1)$.
\vskip .1cm
- Proposition \ref{prop0.6} and all subsequent arguments fail, since it is not sufficient to restrict to neighbourhoods around only $0$, $c$ and $1$. One might try to solve this issue by requiring that the postcritical orbits `gain enough expansion' as was done in for instance \cite{NowvSt} for deterministic maps. An analogous condition for random systems, however, would become much stronger since it would have to hold for all possible random orbits of $c$.
\vskip .1cm
- The argument using Kac's Lemma might fail, because in that case there exist words $\mathbf u$ with symbols in $\Sigma$ and neighbourhoods $U$ of $c$ such that $T_{\mathbf{u}}(x)$ is bounded away from zero and one uniformly in $x \in U$.

\medskip
The dynamical behaviour of the system is governed by the interplay between the superexponential convergence at $c$ and the exponential divergence from $0$ and $1$. In this article we fixed the exponential divergence away from $0$ and $1$ and the two regimes $\theta < 1$ and $\theta \geq 1$ in Theorem \ref{MAIN2} only refer to the convergence at $c$: For smaller $\theta$ orbits are less attracted to $c$. It would be interesting to see under what other conditions on the rates of convergence to $c$ and divergence from $0$ and $1$ the system admits an acs measure. Could one for example
\vskip .1cm
- take exponential convergence to $c$ and polynomial divergence from 0 and 1, or 
\vskip .1cm
- replace the conditions (G4) and (B4) stating that all good and bad maps are expanding at $0$ and $1$ by the condition that the random system is expanding on average at a sufficiently large neighbourhood of $0$ and $1$?

\medskip
There are also some additional questions that our main results raise. It would be interesting for example to study further statistical properties of the random system such as mixing properties and if possible mixing rates in 	case the acs measure is finite. It is not clear a priori if the behaviour of the good maps dominates the statistical properties of the random system, since trajectories spend long periods of time near the points $0$, $c$ and $1$. In this respect the dynamics resembles that of the Manneville-Pomeau maps, and mixing rates might be polynomial rather than exponential. A way to approach this problem is by estimating the measures $\mathbb P\times \lambda(\{\varphi_Y>n\})$, where $\varphi_Y$ is the first return time to $Y$ defined in Section \ref{sec:PT2.1} as they give information on the rates of decay of correlations. To obtain the desired decay rates it is sufficient to obtain estimates for $\mathbb P\times \lambda(\{\varphi_Y=k\})$ for all $k> n$. Recall that every returning set $\{\varphi_Y=k\}$ is of the form $C_k\times J(C_k)$, where $C_k\subset \Sigma^{\mathbb N}$ is a cylinder set and $J(C_k) \subset I$ is an interval with return time $k$, which depends only on $C_k$. Obtaining effective estimates on individual intervals $J$ by directly looking at pre-images of $Y$ under the skew product system does not seem very feasible at the moment, since cylinders can contain a positive proportion of bad maps. An alternative approach could be a combinatorial construction as in \cite{ALP} or \cite{BLS}, where a two step induction process is introduced. To perform a similar construction we have to find a suitable way to define the binding period or the slow recurrence to the critical set, which takes into account the existence of bad maps.

\medskip
Finally, in Theorems \ref{MAIN} and \ref{MAIN3} we have seen that the regularity of the density $\frac{d\mu_{\mathbf p}}{d\lambda}$ depends on whether or not there is a bad map for which $c$ is superattracting: If $\ell_{\max} >1$, then $\frac{d\mu_{\mathbf p}}{d\lambda}$ is not in $L^q$ for any $q > 1$. On the other hand, if $\ell_{\max} = 1$ and the bad maps are expanding on average at $c$, i.e.~$\sum_{b \in \Sigma_B^1} \frac{p_b}{|DT_b(c)|} < 1$, then the density has the same regularity as in the setting of Theorem \ref{thrm1.1} by Nowicki and van Strien. Indeed, in this case, if $r_{\max} > 1$, we have $\frac{d\mu_{\mathbf p}}{d\lambda} \in L^q$ if and only if $1 \leq q < \frac{r_{\max}}{r_{\max} -1}$ and in the case that $r_{\max} = 1$ we have $\frac{d\mu_{\mathbf p}}{d\lambda} \in L^q$ for all $q \in [1,\infty]$. In view of this, one could wonder for which $q > 1$ we have $\frac{d\mu_{\mathbf p}}{d\lambda} \in L^q$ in the intermediate case that $\ell_{\max} = 1$ and $\sum_{b \in \Sigma_B^1} \frac{p_b}{|DT_b(c)|} \geq 1$, i.e.~if $c$ is not superattracting for any bad map and the bad maps are not expanding on average at $c$.

\bibliographystyle{plain} 
\bibliography{Bibliography}

\begin{thebibliography}{10}

\bibitem{Aar97}
J.~Aaronson.
\newblock {\em An introduction to infinite ergodic theory}, volume~50 of {\em
  Mathematical Surveys and Monographs}.
\newblock American Mathematical Society, Providence, RI, 1997.

\bibitem{AbbGhaHom}
N.~Abbasi, M.~Gharaei, and A.~J. Homburg.
\newblock Iterated function systems of logistic maps: synchronization and
  intermittency.
\newblock {\em Nonlinearity}, 31(8):3880--3913, 2018.

\bibitem{ALP}
J.~F. Alves, S.~Luzzatto, and V.~Pinheiro.
\newblock Lyapunov exponents and rates of mixing for one-dimensional maps.
\newblock {\em Ergodic Theory Dynam. Systems}, 24(3):637--657, 2004.

\bibitem{AD00}
K.~B. Athreya and J.~Dai.
\newblock Random logistic maps. {I}.
\newblock {\em J. Theoret. Probab.}, 13(2):595--608, 2000.

\bibitem{AthSch}
K.~B. Athreya and H.-J. Schuh.
\newblock Random logistic maps. {II}. {T}he critical case.
\newblock {\em J. Theoret. Probab.}, 16(4):813--830 (2004), 2003.

\bibitem{BB16}
W.~Bahsoun and C.~Bose.
\newblock Mixing rates and limit theorems for random intermittent maps.
\newblock {\em Nonlinearity}, 29(4):1417--1433, 2016.

\bibitem{BBD14}
W.~Bahsoun, C.~Bose, and Y.~Duan.
\newblock Decay of correlation for random intermittent maps.
\newblock {\em Nonlinearity}, 27(7):1543--1554, 2014.

\bibitem{BBR19}
W.~Bahsoun, C.~Bose, and M.~Ruziboev.
\newblock Quenched decay of correlations for slowly mixing systems.
\newblock {\em Trans. Amer. Math. Soc.}, 372(9):6547--6587, 2019.

\bibitem{BRS20}
W.~Bahsoun, M.~Ruziboev, and B.~Saussol.
\newblock Linear response for random dynamical systems.
\newblock {\em Adv. Math.}, 364:107011, 44, 2020.

\bibitem{BS16}
W.~Bahsoun and B.~Saussol.
\newblock Linear response in the intermittent family: differentiation in a
  weighted {$C^0$}-norm.
\newblock {\em Discrete Contin. Dyn. Syst.}, 36(12):6657--6668, 2016.

\bibitem{BT16}
V.~Baladi and M.~Todd.
\newblock Linear response for intermittent maps.
\newblock {\em Comm. Math. Phys.}, 347(3):857--874, 2016.

\bibitem{BPV}
P.~Berg\'{e}, Y.~Pomeau, and C.~Vidal.
\newblock {\em Order within chaos}.
\newblock A Wiley-Interscience Publication. John Wiley \& Sons, Inc., New York;
  Hermann, Paris, 1986.
\newblock Towards a deterministic approach to turbulence, With a preface by
  David Ruelle, Translated from the French by Laurette Tuckerman.

\bibitem{BLS}
H.~Bruin, S.~Luzzatto, and S.~Van~Strien.
\newblock Decay of correlations in one-dimensional dynamics.
\newblock {\em Ann. Sci. \'{E}cole Norm. Sup. (4)}, 36(4):621--646, 2003.

\bibitem{Car02}
N.~Carlsson.
\newblock A contractivity condition for iterated function systems.
\newblock {\em J. Theoret. Probab.}, 15(3):613--630, 2002.

\bibitem{CF90}
P.~Collet and P.~Ferrero.
\newblock Some limit ratio theorem related to a real endomorphism in case of a
  neutral fixed point.
\newblock {\em Ann. Inst. H. Poincar\'{e} Phys. Th\'{e}or.}, 52(3):283--301,
  1990.

\bibitem{dMvS93}
W.~de~Melo and S.~van Strien.
\newblock {\em One-dimensional dynamics}, volume~25 of {\em Ergebnisse der
  Mathematik und ihrer Grenzgebiete (3) [Results in Mathematics and Related
  Areas (3)]}.
\newblock Springer-Verlag, Berlin, 1993.

\bibitem{FFTV}
A.~C.~M. Freitas, J.~M. Freitas, M.~Todd, and S.~Vaienti.
\newblock Rare events for the {M}anneville-{P}omeau map.
\newblock {\em Stochastic Process. Appl.}, 126(11):3463--3479, 2016.

\bibitem{Fro99}
G.~Froyland.
\newblock Ulam's method for random interval maps.
\newblock {\em Nonlinearity}, 12(4):1029--1052, 1999.

\bibitem{G004}
S.~Gou\"{e}zel.
\newblock Central limit theorem and stable laws for intermittent maps.
\newblock {\em Probab. Theory Related Fields}, 128(1):82--122, 2004.

\bibitem{G007}
S.~Gou\"{e}zel.
\newblock A {B}orel-{C}antelli lemma for intermittent interval maps.
\newblock {\em Nonlinearity}, 20(6):1491--1497, 2007.

\bibitem{HP}
A.~J. Homburg and H.~Peters.
\newblock Critical intermittency in rational maps.
\newblock \url{https://staff.fnwi.uva.nl/a.j.homburg/Files/critical-ajh.pdf}.

\bibitem{Hu01}
H.~Hu.
\newblock Statistical properties of some almost hyperbolic systems.
\newblock In {\em Smooth ergodic theory and its applications ({S}eattle, {WA},
  1999)}, volume~69 of {\em Proc. Sympos. Pure Math.}, pages 367--384. Amer.
  Math. Soc., Providence, RI, 2001.

\bibitem{KKV17}
C.~Kalle, T.~Kempton, and E.~Verbitskiy.
\newblock The random continued fraction transformation.
\newblock {\em Nonlinearity}, 30(3):1182--1203, 2017.

\bibitem{KMTV}
C.~Kalle, V.~Matache, M.~Tsujii, and E.~Verbitskiy.
\newblock Invariant densities for random continued fractions.
\newblock preprint, 2021.

\bibitem{Las04}
A.~Lasota, J.~Myjak, and T.~Szarek.
\newblock Markov operators and semifractals.
\newblock In {\em Fractal geometry and stochastics {III}}, volume~57 of {\em
  Progr. Probab.}, pages 3--22. Birkh\"{a}user, Basel, 2004.

\bibitem{LSV}
C.~Liverani, B.~Saussol, and S.~Vaienti.
\newblock A probabilistic approach to intermittency.
\newblock {\em Ergodic Theory Dynam. Systems}, 19(3):671--685, 1999.

\bibitem{ManPum2}
P.~Manneville and Y.~Pomeau.
\newblock Different ways to turbulence in dissipative dynamical systems.
\newblock {\em Phys. D}, 1(2):219--226, 1980.

\bibitem{Mor85}
T.~Morita.
\newblock Asymptotic behavior of one-dimensional random dynamical systems.
\newblock {\em J. Math. Soc. Japan}, 37(4):651--663, 1985.

\bibitem{NowvSt}
T.~Nowicki and S.~van Strien.
\newblock Invariant measures exist under a summability condition for unimodal
  maps.
\newblock {\em Invent. Math.}, 105(1):123--136, 1991.

\bibitem{PW99}
M.~Pollicott and H.~Weiss.
\newblock Multifractal analysis of {L}yapunov exponent for continued fraction
  and {M}anneville-{P}omeau transformations and applications to {D}iophantine
  approximation.
\newblock {\em Comm. Math. Phys.}, 207(1):145--171, 1999.

\bibitem{PY01}
M.~Pollicott and M.~Yuri.
\newblock Statistical properties of maps with indifferent periodic points.
\newblock {\em Comm. Math. Phys.}, 217(3):503--520, 2001.

\bibitem{ManPum}
Y.~Pomeau and P.~Manneville.
\newblock Intermittent transition to turbulence in dissipative dynamical
  systems.
\newblock {\em Comm. Math. Phys.}, 74(2):189--197, 1980.

\bibitem{Tha80}
M.~Thaler.
\newblock Estimates of the invariant densities of endomorphisms with
  indifferent fixed points.
\newblock {\em Israel J. Math.}, 37(4):303--314, 1980.

\bibitem{Tol03}
R.~Toledano.
\newblock A note on the {L}ebesgue differentiation theorem in spaces of
  homogeneous type.
\newblock {\em Real Anal. Exchange}, 29(1):335--339, 2003/04.

\bibitem{Y99}
L.-S. Young.
\newblock Recurrence times and rates of mixing.
\newblock {\em Israel J. Math.}, 110:153--188, 1999.

\bibitem{Zwe03}
R.~Zweim\"{u}ller.
\newblock Stable limits for probability preserving maps with indifferent fixed
  points.
\newblock {\em Stoch. Dyn.}, 3(1):83--99, 2003.

\end{thebibliography}

\end{document}